\numberwithin{equation}{section}
\newtheorem{theorem}{Theorem}[section]
\newtheorem{proposition}[theorem]{Proposition}
\newtheorem{corollary}[theorem]{Corollary}
\newtheorem{lemma}[theorem]{Lemma}
\theoremstyle{definition}
\newtheorem{definition}[theorem]{Definition}
\theoremstyle{remark}
\newtheorem{remark}[theorem]{Remark}
\newcommand{\be}{\begin{equation}}
\newcommand{\ee}{\end{equation}}
\begin{document}

\title[  threshold for fractional Hartree equation
 ]
{ Sharp Threshold of Blow-up and
  Scattering for the fractional Hartree equation }

 \author{Qing Guo and Shihui Zhu
}

\address{College of Science, Minzu University of China, Beijing 100081, China}
\email{guoqing0117@163.com}

\address{Department of Mathematics,
Sichuan Normal University\\ Chengdu, Sichuan 610066, China}
\email{ shihuizhumath@163.com}

\begin{abstract} We consider the  fractional Hartree equation in the  $L^2$-supercritical  case, and we find a  sharp threshold of the scattering versus blow-up dichotomy for radial data: If $ M[u_{0}]^{\frac{s-s_c}{s_c}}E[u_{0}]<M[Q]^{\frac{s-s_c}{s_c}}E[Q]$ and $M[u_{0}]^{\frac{s-s_c}{s_c}}\|
u_{0}\|^2_{\dot H^s}<M[Q]^{\frac{s-s_c}{s_c}}\| Q\|^2_{\dot H^s}$, then the solution $u(t)$ is globally well-posed and scatters; if $ M[u_{0}]^{\frac{s-s_c}{s_c}}E[u_{0}]<M[Q]^{\frac{s-s_c}{s_c}}E[Q]$ and $M[u_{0}]^{\frac{s-s_c}{s_c}}\|
u_{0}\|^2_{\dot H^s}>M[Q]^{\frac{s-s_c}{s_c}}\| Q\|^2_{\dot H^s}$, the solution $u(t)$ blows up in finite time. This condition is sharp in the sense that the solitary wave  solution $e^{it}Q(x)$  is global but not scattering, which satisfies the equality in the above conditions. Here, $Q$ is the ground-state solution for the fractional Hartree equation.

\end{abstract}

\maketitle
MSC:  35Q40, 35Q55, 47J30

Keywords: Fractional Schr\"{o}dinger equation;  $L^2$-supercritical;  Scattering;  Blow-up.

\section{Introduction}


In this paper, we   study   the   fractional Hartree
equation, which is the $L^2$-supercritical,  nonlinear, fractional
Schr\"{o}dinger equation.
\begin{equation}\label{eq1} iu_t-(-\triangle)^s
u+(\frac{1}{|x|^{\gamma}}*|u|^{2})u=0, \ \ \  \end{equation}
with the parameters $0<s<1$ and $2s<\gamma<\min\{N,4s\}$,
where  $i$ is the imaginary unit and $u=u(t,x)$: $\mathbb{R}\times \mathbb{R}^N \to \mathbb{C}$ is a complex valued function.  The operator $(-\triangle)^{s}$ is defined by
\[(-\triangle)^{s}u=\frac{1}{(2\pi)^{\frac N2}}\int  e^{ix\cdot\xi}|\xi|^{2s}\widehat{u}(\xi)d\xi=\mathcal{F}^{-1}[|\xi|^{2s}\mathcal{F}[u](\xi)],\]   where $\mathcal{F}$ and
$\mathcal{F}^{-1}$ are the Fourier transform and the Fourier inverse
transform in $\mathbb{R}^N$, respectively.
The fractional  Schr\"{o}dinger equations were first proposed by
Laskin  in \cite{Laskin2000,Laskin2002}  using the theory of functionals  over functional measures
generated  from the L\'{e}vy stochastic process
and by expanding the Feynman path
integral from the Brownian-like to  the L\'{e}vy-like quantum mechanical paths.  Here, $s$ is the L\'evy index.
In particular, if
$s=\frac12$ and $\gamma=1$, then \eqref{eq1}  models the dynamics of (pseudo-relativistic) boson stars, where the potential $\frac{1}{|x|}$ is the Newtonian gravitational potential in the appropriate physical units (see\cite{ElgartSchlein2007,Lenzmann2007}).
This equation is also called the pseudo-relativistic Hartree equation, whose  global existence and blow-up  have  been widely studied in \cite{FrohlichLenzmann2007,LenzmannLewin2010}.

Eq.(\ref{eq1}) is the   $L^2$-supercritical,  nonlinear, fractional
Schr\"{o}dinger equation.
Indeed, we remark on the scaling invariance of Eq.(\ref{eq1}). If $u(t,x)$ is  a solution of Eq.(\ref{eq1}), then $u^{\lambda}(t,x)=\lambda^{\frac{N-\gamma+2s}{2}}u(\lambda^{2s}t,\lambda x)$ is also a solution of Eq.(\ref{eq1}). This implies that
 \begin{itemize}
\item [(1)] $\|u^{\lambda}\|_{L^{p_c}}= \|u\|_{L^{p_c}}$, where $p_c=\frac{2N}{N-\gamma+2s}$. Moreover, when $\gamma>2s$, we see that $p_c>2$, and Eq. (\ref{eq1}) is called the $L^2$-supercritical, nonlinear, fractional  Schr\"{o}dinger equation.
\item [(2)] $\dot{H}^{s_c}$-norm is invariant for Eq. (\ref{eq1}), i.e., $\|u^{\lambda}\|_{\dot{H}^{s_c}}= \|u\|_{\dot{H}^{s_c}}$, where $s_c=\frac{\gamma-2s}{2}$.
\end{itemize}
Now, we impose  the initial data, \begin{equation}
 \label{1.2} u(0,x)=u_{0}\in H^s,
\end{equation}
onto (\ref{eq1}) and  consider the Cauchy problem  (\ref{eq1})-(\ref{1.2}). Cho et al  in \cite{ChoHwangKwonLee2012,ChoHwangHajaiejOzawa2012}
 established the local well-posedness  in $H^s$ as follows:
Let $N\geq 2$, $\frac12\leq s<1$ and $0<\gamma<\min\{N,4s\}$. If the
initial data $u_{0}\in H^s$, then
 there exists a unique solution $u(t,x)$
of the Cauchy problem (\ref{eq1})-(\ref{1.2}) on the maximal time
interval $I=[0,T)$ such that $u(t,x)\in C(I;H^{s})\bigcap
C^1(I;H^{-s})$ and either $T=+\infty$ (global existence) or
both $0<T<+\infty$ and $\lim\limits_{t\to T} \|
u(t,x)\|_{H^{s}} =+\infty$ (blow-up). Moreover, for
all $t\in I$, $u(t,x)$ satisfies the following conservation
laws.
\begin{itemize}
\item [(i)] Conservation of energy:
\begin{equation}\label{E}E[u(t)]= \frac 12 \int _{\mathbb{R}^N}\overline{u}(-\triangle )^{s}udx -\frac {1}{4} \int_{\mathbb{R}^N}\int_{\mathbb{R}^N}\frac{|u(x)|^2|u(y)|^2}{|x-y|^\gamma}  dxdy=E[u_0].\end{equation}
 \item [(ii)] Conservation of mass:
 \begin{equation}\label{M}M[u(t)]=\int_{\mathbb{R}^N} |u(t,x)|^2dx=M[u_0].\end{equation}
\end{itemize}
Now, even less is known about the global well-posedness and scattering results. To the authors' knowledge, Cho  et al in \cite{ChoHwangHajaiejOzawa2012}  gave
some small data  results.
First, they addressed the energy-supercritical case, i.e., $4s\leq\gamma<N$, and set some $\alpha>\frac{\gamma-2s}2$.
Assume that the initial data $\|u_0\|_{H^\alpha}$ are sufficiently small; then, there exists a unique solution
$u\in C_b([0,\infty);H^\alpha)\cap L^2(0,\infty;H^{\alpha+s-1}_{\frac{2N}{N-2}})$,
where $H^\alpha_q=(-\Delta)^{-\frac\alpha 2}L^q$. Moreover, there is $\phi^+\in H^\alpha$ such that
$$\|u(t)-e^{-i(-\Delta)^{s}}\phi^+\|_{H^\alpha}\rightarrow0,\ as\ t\rightarrow\infty.$$
Moreover, for the energy-subcritical case and for sufficiently small radial data $u_0\in H^s_{rad}$, they presented
some global well-posedness results: for $\frac N{2N-1}\leq s<1$, $2s<\gamma<\min\{4s,N\}$, there exists a
unique solution \[u(tax)\in C_b([0,\infty); H^s_{rad})\cap L_{loc}^{\frac{6s}{\gamma-2s}}(0,\infty;H^s_{\frac{2N}{N-\frac{2\gamma-4s}{3}}}).\]
However, they did not consider the scattering results in this case. On the other hand,  as a typical dispersive wave equation, under certain conditions, the solution of the nonlinear fractional Schr\"{o}dinger equation (\ref{eq1}) may blow-up in  finite time.
In light of the above phenomena,
a natural question would be
how small of initial data will induce the global existence of the solution. Furthermore, does this global solution scatter at either side of time?

Motivated by this problem, we study the scattering versus blow-up dichotomy of the solutions for the focusing $L^2$-supercritical,  nonlinear, fractional
Schr\"{o}dinger equation (\ref{eq1}).  Similar to studies on the classical semi-linear Schr\"{o}dinger equation (see\cite{Cazenave2003,MiaoZhang2008,Zhang2002}), we attempt to use the variational method to find a sharp threshold of blow-up and global existence of the solutions to  (\ref{eq1}).
The first topic is the ground-state solution of the equation
 \be\label{eqQ}(-\triangle
)^s Q+Q-(\frac{1}{|x|^{\gamma}}*|Q|^2)Q=0,\ \  \ \  Q\in H^s(\mathbb{R}^N).\ee
The existence of a non-trivial solution of Eq. (\ref{eqQ}) has been studied in \cite{Hajaiej2013,Zhu2016}, and the  stability of related standing waves has been obtained in \cite{CHHO2015,Guohuang2012,ZhangZhu2015}. In \cite{Zhu2016}, the second author of this paper obtained a sharp Gagliardo-Nirenberg inequality, which reveals the
  variational characteristic of the ground-state solutions for Eq. (\ref{eqQ}):
Let $N\geq 2$, $0<s<1$ and $0<\gamma<\min\{N,4s\}$. Then, for all $  v\in
H^s$,
\begin{equation}
\label{G-N}\int\int
\frac{|v(x)|^2|v(y)|^2}{|x-y|^\gamma}dxdy\leq C_{GN}
\left\|
v\right\|_2^{\frac{4s-\gamma}{s}}\left\|v\right\|_{\dot{H}^s}^{\frac{\gamma}{s}},\end{equation}
where $Q$ is a solution of (\ref{eqQ}),
\begin{equation}\label{CGN}
C_{GN}=\frac{4s}\gamma\frac1{\left\|
Q\right\|_2^{\frac{4s-\gamma}{s}}\left\|Q\right\|_{\dot{H}^s}^{\frac{\gamma-2s}{s}}}
=\left(\frac{4s-\gamma}{\gamma}\right)^{\frac{\gamma}{2s}}\frac{4s}{(4s-\gamma)\|Q\|_2^2}.
\end{equation}
Given the fractional operator $(-\triangle )^s$, the classical Virial identity argument fails, and the
the existence of blow-up solutions for  (\ref{eq1}) presents a particular difficulty.
The numerical observations of blow-up solutions have been studied in \cite{BaoCaiWang2010,BaoDong2011}.
 The theoretical  proof of the existence of the blow-up solutions   of  (\ref{eq1})
has been presented by Cho et al in \cite{ChoHwangKwonLee2012}. They
   proved that if $\gamma= 2 s\geq 1$ and the initial energy is negative, then the life span $[0,T)$ of  the corresponding solutions must be finite (i.e., $T<+\infty$). In \cite{Zhu2016}, by establishing some new estimates, Zhu proved the existence of a finite-time blow-up solution for Eq. (\ref{eq1}) with $\gamma= 2 s$ and the dynamics of the blow-up solutions.
   We note that the sharp threshold of the blow-up solutions and global existence for Eq. (\ref{eq1}) with $\gamma> 2 s$ remains unknown.

In the present paper,  we first construct two invariant flows by injecting the sharp
Gagliardo-Nirenberg inequality proposed by Zhu in \cite{Zhu2016}, which strongly depend on the scaling index $s_c=\frac{\gamma-2s}{2}$.
Then, we obtain the sharp criteria of blow-up and scattering for the $L^2$-supercritical, nonlinear, fractional Schr\"{o}dinger Eq. (\ref{eq1}) in terms
of the arguments in \cite{qgcpde16,radial,km}. The main theorem is as follows.
\newpage
\begin{theorem}\label{th1}
Let $N\geq 2$ and $2s<\gamma<\min\{N,4s\}$. Assume that $u_0\in H^s$ is radial and $ M[u_{0}]^{\frac{s-s_c}{s_c}}E[u_{0}]<M[Q]^{\frac{s-s_c}{s_c}}E[Q],
$ where $Q$ is the ground-state solution of \eqref{eqQ}.
\begin{itemize}
  \item [(i)] If $\frac N{2N-1}\leq s<1$ and
  \begin{equation*}M[u_{0}]^{\frac{s-s_c}{s_c}}\|
u_{0}\|^2_{\dot H^s}<M[Q]^{\frac{s-s_c}{s_c}}\| Q\|^2_{\dot H^s},\end{equation*}
  then the corresponding solution $u(t,x)$ of
(\ref{eq1})-(\ref{1.2}) exists globally in $H^s$. Moreover, $u=u(t)$ scatters in $H^s$. Specifically, there
exists $\phi_\pm\in H^s$ such that
$\lim\limits_{t\rightarrow\pm\infty}\|u(t)-e^{-it(-\Delta)^{s}}\phi_\pm\|_{H^s}=0$.
  \item [(ii)] Further, if  the initial data $u_0\in
H^{s_0}$ with $s_0=\max\{2s, \frac{\gamma+1}{2}\}$ and
  \begin{equation*}M[u_{0}]^{\frac{s-s_c}{s_c}}\|
u_{0}\|^2_{\dot H^s}>M[Q]^{\frac{s-s_c}{s_c}}\| Q\|^2_{\dot H^s}\end{equation*}
  satisfies $ |x| u_0\in L^2$  and $x\cdot \nabla u_0\in L^2$,  then the solution $u(t,x)$ of
  (\ref{eq1})-(\ref{1.2}) must blow up in finite time $0<T<+\infty$.
\end{itemize}

\end{theorem}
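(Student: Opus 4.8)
\emph{Proof proposal.} I would follow the variational mass--energy dichotomy of Holmer--Roudenko type, supplemented by the Kenig--Merle concentration--compactness/rigidity scheme for the scattering half and a localized fractional virial identity for the blow-up half. Set $\alpha:=\frac{s-s_c}{s_c}=\frac{4s-\gamma}{\gamma-2s}$ and, along a solution $u=u(t)$ of (\ref{eq1})--(\ref{1.2}), define the scale-invariant quantity $X(t):=M[u]^{\alpha}\|u(t)\|_{\dot H^s}^2$. Multiplying the energy identity (\ref{E}) by $M[u]^{\alpha}$ and bounding the Hartree term by the sharp Gagliardo--Nirenberg inequality (\ref{G-N}), the powers of $\|u\|_2$ and $\|u\|_{\dot H^s}$ combine exactly into
\[M[u]^{\alpha}E[u]\ \ge\ \tfrac12 X(t)-\tfrac{C_{GN}}{4}\,X(t)^{\gamma/(2s)}=:f\big(X(t)\big).\]
Since $2s<\gamma<4s$ we have $1<\gamma/(2s)<2$, so $f$ strictly increases on $[0,X_*]$ and strictly decreases on $[X_*,\infty)$; using the explicit value of $C_{GN}$ in (\ref{CGN}) together with the Pohozaev/Nehari identities for $Q$ (pair (\ref{eqQ}) with $Q$ and with $x\cdot\nabla Q$, which give $\iint\frac{|Q(x)|^2|Q(y)|^2}{|x-y|^{\gamma}}\,dx\,dy=\frac{4s}{\gamma}\|Q\|_{\dot H^s}^2$), one computes $X_*=M[Q]^{\alpha}\|Q\|_{\dot H^s}^2$ and $f(X_*)=\tfrac{s_c}{\gamma}X_*=M[Q]^{\alpha}E[Q]$. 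Under the standing hypothesis $M[u_0]^{\alpha}E[u_0]<M[Q]^{\alpha}E[Q]=f(X_*)$, conservation of mass and energy give $f(X(t))\le M[u_0]^{\alpha}E[u_0]<f(X_*)$ throughout the maximal interval; since $t\mapsto X(t)$ is continuous and $f$ is one-to-one on each side of $X_*$, $X(t)$ can never equal $X_*$. Hence $X(t)<X_*$ for all $t$ when $X(0)<X_*$ (case (i)), and $X(t)>X_*$ for all $t$ when $X(0)>X_*$ (case (ii)); moreover in case (ii) the strict energy gap yields a fixed $\delta_1\in(0,1)$ with $M[u_0]^{\alpha}E[u_0]\le(1-\delta_1)\frac{s_c}{\gamma}X_*$.

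For case (i), the trapping $X(t)<X_*$ together with mass conservation gives $\sup_t\|u(t)\|_{H^s}<\infty$, so the blow-up alternative in the local theory of Cho et al. \cite{ChoHwangKwonLee2012,ChoHwangHajaiejOzawa2012} makes the solution global --- here $\frac{N}{2N-1}\le s<1$ enters through the admissible radial Strichartz pairs. For scattering I would run the concentration--compactness/rigidity argument in the form adapted to the fractional setting in \cite{qgcpde16,radial,km}: (a) small-data scattering from the radial dispersive/Strichartz estimates for $e^{-it(-\triangle)^s}$; (b) if scattering failed at the threshold, a linear profile decomposition together with the variational trapping above would produce a nonzero minimal critical element $u_c$ with orbit $\{u_c(t):t\in\mathbb R\}$ precompact in $H^s$ --- radiality is used precisely here to suppress the spatial translation parameter; (c) a rigidity step in which a truncated virial/Morawetz functional, fed by the compactness of the orbit and the sub-threshold bound $X_c(t)<X_*$, forces $u_c\equiv 0$, a contradiction. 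Hence every solution in regime (i) scatters in $H^s$.

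For case (ii), under the additional assumptions $u_0\in H^{s_0}$ with $s_0=\max\{2s,\frac{\gamma+1}{2}\}$, $|x|u_0\in L^2$ and $x\cdot\nabla u_0\in L^2$, I would first propagate the weighted bound $|x|u(t)\in L^2$ on the maximal interval, so that $V(t):=\int_{\mathbb R^N}|x|^2|u(t,x)|^2\,dx$ is finite and, thanks to the higher regularity, twice differentiable. Carrying out the fractional virial computation --- via the singular-integral representation of $(-\triangle)^s$, or by truncating $|x|^2$ to a cutoff $\varphi_R$ and controlling the commutator errors in the spirit of the estimates of \cite{Zhu2016} --- yields
\[V''(t)=8s\,\|u(t)\|_{\dot H^s}^2-2\gamma\iint\frac{|u(x)|^2|u(y)|^2}{|x-y|^{\gamma}}\,dx\,dy=8\gamma\,E[u_0]-8s_c\,\|u(t)\|_{\dot H^s}^2.\]
Multiplying by $M[u_0]^{\alpha}$ and using $X(t)>X_*$ with the energy gap from case (ii),
\[M[u_0]^{\alpha}V''(t)=8\gamma M[u_0]^{\alpha}E[u_0]-8s_c X(t)\ \le\ 8s_c(1-\delta_1)X_*-8s_c X_*=-8s_c\delta_1 X_*<0.\]
Thus $V\ge 0$ is strictly concave with $V''\le -c<0$, which cannot persist on an infinite interval; hence the maximal existence time is finite and $u$ blows up in finite time.

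The genuinely fractional obstacles are two. In (i) it is the rigidity step: establishing precompactness of the critical element and closing a localized virial/Morawetz estimate when both $(-\triangle)^s$ and the Hartree nonlinearity are nonlocal, so that the sharp spatial localization available at $s=1$ is lost. In (ii) it is making the virial identity rigorous: since $(-\triangle)^s$ does not commute with multiplication by $|x|^2$ and does not localize, one must control the commutator/error terms and propagate $\||x|u(t)\|_{L^2}$, which is exactly why the extra regularity $s_0=\max\{2s,\frac{\gamma+1}{2}\}$ and the weighted hypotheses on $u_0$ are imposed. I expect the rigorous virial identity of step (ii), with its companion weighted-norm propagation, to be the principal technical hurdle, while the scattering in (i) is longer but follows the established Kenig--Merle template.
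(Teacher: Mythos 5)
The variational trapping argument (the function $f$ and the invariance of $K_1,K_2$) and the Kenig--Merle concentration--compactness/rigidity scheme you outline for the scattering half match the paper's Sections~3--5 in structure; your $f(X)$ with $X=y^2$ is exactly the paper's $f(y)=\tfrac12y^2-\tfrac14 C_{GN}y^{\gamma/s}$, and your steps (a)--(c) correspond to Propositions~\ref{sd}, \ref{properturb}, \ref{critical}, \ref{procompact} and Theorem~\ref{rigidity}.

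The gap is in your blow-up argument for part~(ii). You assert the pointwise virial identity
$V''(t)=8s\|u(t)\|_{\dot H^s}^2-2\gamma\iint\frac{|u(x)|^2|u(y)|^2}{|x-y|^{\gamma}}\,dx\,dy$
for the standard variance $V(t)=\int|x|^2|u|^2\,dx$. This identity is false for $0<s<1$: already for the free flow $iu_t=(-\Delta)^su$, a Fourier-side computation ($\widehat{xu}=i\nabla_\xi\widehat u$, with $\widehat u(t,\xi)=e^{-it|\xi|^{2s}}\widehat u_0$) gives $V''(t)=8s^2\|u_0\|_{\dot H^{2s-1}}^2$, which has both a different constant and a different Sobolev exponent than $8s\|u(t)\|_{\dot H^s}^2$; the two coincide only at $s=1$. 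The paper flags exactly this obstruction (``the classical Virial identity argument fails'') and instead works with the modified weighted quantity $\int\overline{u}\,x(-\Delta)^{1-s}xu\,dx$ from \cite{ChoHwangKwonLee2012,Zhu2016}, for which one has only the one-sided, doubly-integrated \emph{inequality} \eqref{4.18}, not a clean $V''$ identity. That inequality, combined with $K_2$-trapping (which forces $\|u(t)\|_{\dot H^s}^2>A^2$) and the strict energy gap, yields a quadratic-in-$t$ upper bound that eventually contradicts the non-negativity of $\int\overline{u}\,x(-\Delta)^{1-s}xu\,dx$. You correctly name the commutator/propagation issue as ``the principal technical hurdle,'' but the sketch then writes down the clean formula anyway; without proving the correct weighted estimate, or citing \eqref{4.18} for the correct functional, the blow-up step as written does not close.
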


This paper is organized as follows. In Section 2, using   the Strichartz estimates, we establish
the small data theory and the long-time perturbation theory. We review properties
of the ground state  Q in Section 3 in connection with the sharp Gagliardo-Nirenberg estimate.  We can construct the invariant flows generated by the Cauchy problem of (\ref{eq1}) and (\ref{1.2}) and prove Theorem \ref{th1}  for the blow-up part (ii). In Section 4, we introduce the local virial identity
and prove Theorem \ref{th1}, except for the scattering claim in part (i).  By assuming that the threshold for scattering
is strictly below the threshold claimed, we construct a ``critical element", $u_c$,
that stands exactly at the boundary between scattering and non-scattering. This is
done through a profile decomposition lemma in $H^{s}$. We then show that time slices of $u_c(t)$, as a collection of functions
in $H^s$, form a precompact set in $H^s$ (and thus, $u_c$ has something in common with the
soliton $Q(x)$). This enables us to prove that $u_c$ remains localized uniformly in time. In
Section 5, by using the localization in Section 4, we deduce a contradiction with the conservation of mass at large times.

We conclude this section by  introducing  some notations.
$L^q:=L^q(\mathbb{R}^N)$,
$\|\cdot\|_q:=\|\cdot\|_{L^q(\mathbb{R}^N)}$,
the time-space mixed norm $$\|u\|_{L^qX}:=\left(\int_\mathbb R\|u(t,\cdot)\|_{X}^q\right)^{\frac1q},$$
$H^s:=H^s(\mathbb{R}^N)$, $\dot{H}^s:=\dot{H}^s(\mathbb{R}^N)$, and
$\int \cdot dx:=\int_{\mathbb{R}^N}\cdot dx$.  $\mathcal{F}v=\widehat{v}$ denotes  the Fourier
transform of $v$, which for $v\in L^1( \mathbb{R}^N)$ is given by
$\mathcal{F}v= \widehat{v} (\xi):= \int e^{-i
x\cdot\xi}v(x)dx$  for all $\xi\in \mathbb{R}^N$, and $\mathcal{F}^{-1}v$ is
 the inverse Fourier transform  of
$v(\xi)$. $\Re z$ and $\Im z$ are the
real and imaginary parts of the complex number $z$, respectively.
 $\overline{z}$ denotes the complex conjugate of the complex number $z$. The various positive
constants will  be denoted by $C$ or $c$.

\section{Local theory and Strichartz estimate }
In this paper, we study the Cauchy
problem \eqref{eq1}-\eqref{1.2} in the form of the following integral equation:
$$u(t) = U(t)u_0+i\int_0^tU(t-t^1)(\frac{1}{|x|^{\gamma}}*|u|^{2})u(t^1)dt^1
$$
where$$
U(t)\phi(x) = e^{-i(-\triangle)^st}\phi(x)=\frac{1}{(2\pi)^{\frac N2}}\int  e^{i(x\cdot\xi-|\xi|^{2s})}\widehat{\phi}(\xi)d\xi.$$
In this section, we first recall the local theory for Eq.~\eqref{eq1}~ by the radial Strichartz estimate
~(see \cite{G-W,KT}).
\begin{definition}\label{defadmissible}
For the given $\theta\in[0,s)$, we state that the pair $(q,r)$ is $\theta$-level admissible,
denoted by $(q,r)\in\Lambda_\theta$,  if
\begin{align}\label{gap}
q,r\geq2,\ \
\frac{2s}{q}+\frac N{r}=\frac N2-\theta
\end{align}
and
 \begin{align}\label{range}
 \frac{4N+2}{2N-1}\leq q\leq\infty,\ \
\frac1{q}\leq\frac {2N-1}2(\frac12-\frac1{r}),\ \ \
or\ \  2\leq q<\frac{4N+2}{2N-1},
  \ \frac1{q}<\frac {2N-1}2(\frac12-\frac1{r}).
 \end{align}
Correspondingly, we denote the dual $\theta$-level admissible pair by $(q',r')\in\Lambda'_{\theta}$~if
~$(q,r)\in\Lambda_{-\theta}$~with~$(q',r')$~is the H\"older~ dual to~$(q,r).$~
\end{definition}

\begin{proposition}\label{prostrichartz} (see\cite{G-W})
Assume that $N\geq2$ and that $u_0,f$ are radial; then  for $q_j,r_j\geq2,j=1,2$,
\be
\|U(t)\phi\|_{L^{q_1}L^{r_1}}\leq C\|D^\theta\phi\|_2,
\ee
where $D^\theta=(-\triangle)^{\frac\theta2}$,
\be\label{inhomo}
\|\int_0^tU(t-t^1)f(t^1)dt^1\|_{L^{q_1}L^{r_1}}\leq C\|f\|_{L^{q'_2}L^{r'_2}},
\ee
in which $\theta\in\mathbb R$, the pairs $(q_j,r_j)$ satisfy the range conditions \eqref{range}
and the gap condition
$$\frac{2s}{q_1}+\frac N{r_1}=\frac N2-\theta,\ \ \ \frac{2s}{q_2}+\frac N{r_2}=\frac N2+\theta.$$
\end{proposition}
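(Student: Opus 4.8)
Following \cite{G-W}, the plan is to run the classical $TT^{*}$ machinery, but only after a Littlewood--Paley decomposition, so that the singularity of the symbol $|\xi|^{2s}$ at the origin plays no role and one works with the model propagator $e^{-it(-\triangle)^{s}}$ localized to a single dyadic frequency shell. Let $P_{j}$ denote the Littlewood--Paley projection onto $|\xi|\sim 2^{j}$. Conjugating $U(t)P_{j}$ by the dilation $x\mapsto 2^{j}x$ turns it into $U(2^{2sj}t)P_{0}$ up to an $x$-rescaling, and tracking how the $L^{q_{1}}L^{r_{1}}$ and $L^{2}$ norms transform shows that the bound $\|U(t)P_{j}\phi\|_{L^{q_{1}}L^{r_{1}}}\leq C\,2^{j\theta}\|P_{j}\phi\|_{2}$ is equivalent, with the same constant $C$, to the single-block estimate $\|U(t)P_{0}h\|_{L^{q_{1}}L^{r_{1}}}\leq C\|h\|_{2}$ for frequency-$1$ data $h$; the gap condition $\frac{2s}{q_{1}}+\frac{N}{r_{1}}=\frac N2-\theta$ is exactly what converts the dilation factors produced by scaling back into the power $2^{j\theta}$. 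Since $q_{1},r_{1}\geq 2$, the Littlewood--Paley square function inequality in $x$ together with Minkowski's inequality in $t$ then give $\|U(t)\phi\|_{L^{q_{1}}L^{r_{1}}}\leq C(\sum_{j}2^{2j\theta}\|P_{j}\phi\|_{2}^{2})^{1/2}\leq C'\|D^{\theta}\phi\|_{2}$, and the analogous reduction applies to \eqref{inhomo}. So it suffices to prove both estimates for $e^{-it(-\triangle)^{s}}P_{0}$ (where, in the inhomogeneous case, radiality of $f$ is used so that $U(t-\tau)f(\tau)$ is again radial).

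The core input is a radial dispersive estimate for $P_{0}$. On $\mathrm{supp}\,\widehat{P_{0}\phi}\subset\{\,|\xi|\sim1\,\}$ the phase $|\xi|^{2s}$ is smooth, so ordinary stationary phase gives the basic bound $\|U(t)P_{0}\phi\|_{\infty}\leq C(1+|t|)^{-N/2}\|\phi\|_{1}$. For radial $\phi$ I would do strictly better by writing the kernel of $U(t)P_{0}$ as a one-dimensional Bessel (Hankel) integral and using the large-argument asymptotics of $J_{\nu}$: this yields the weighted improvement $|U(t)P_{0}\phi(x)|\leq C\,|t|^{-1/2}|x|^{-(N-1)/2}\|\phi\|_{1}$ in the region where $|x|$ is comparable to, or larger than, $|t|$. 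Interpolating this against the trivial bound $\|U(t)P_{0}\|_{L^{2}\to L^{2}}\leq C$ produces a family of weighted dispersive estimates $\||x|^{\sigma}U(t)P_{0}\phi\|_{r}\leq C\,|t|^{-\rho}\||x|^{-\sigma}\phi\|_{r'}$ whose admissible triples $(r,\sigma,\rho)$ are precisely those encoded in the radially-improved range condition \eqref{range}; the coefficient $\frac{2N-1}{2}$ appearing there is the slope of the extra radial decay $|x|^{-(N-1)/2}$.

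With the dispersive estimate in hand, I would pass to space-time by $TT^{*}$ duality: the homogeneous estimate $\|U(t)\phi\|_{L^{q_{1}}L^{r_{1}}}\leq C\|D^{\theta}\phi\|_{2}$ is equivalent to the boundedness of the bilinear form $B(F,G)=\iint\langle U(t-\tau)F(\tau),G(t)\rangle\,d\tau\,dt$ on $L^{q_{2}'}L^{r_{2}'}\times L^{q_{1}'}L^{r_{1}'}$, the two gap conditions being matched precisely so that $U(t-\tau)$ is bounded on $L^{2}$. One estimates $B$ by feeding the weighted dispersive bound into the $x$-integral and then applying Hardy--Littlewood--Sobolev in the single variable $t$ to the resulting kernel $|t-\tau|^{-\rho}$ --- the range conditions \eqref{range} are exactly the one-dimensional HLS-admissibility conditions --- after which the spatial weights $|x|^{\pm\sigma}$ are removed with no loss via the radial Sobolev embedding (equivalently, a Schur test). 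This proves the homogeneous estimate, and its dual follows from the same form. Finally the retarded estimate \eqref{inhomo}, with $\int_{0}^{t}$ in place of $\int_{\mathbb R}$, follows from the non-retarded one by the Christ--Kiselev lemma, which applies because \eqref{range} keeps the exponents strictly off the diagonal ($q_{1}>q_{2}'$); the borderline cases where Christ--Kiselev does not apply are treated instead by the Keel--Tao bilinear real-interpolation argument \cite{KT}.

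The hard part will be the radial dispersive estimate of the second step. Unlike the Schr\"{o}dinger case $s=1$, the symbol $|\xi|^{2s}$ is singular at $\xi=0$, so there is no global $|t|^{-N/2}$ decay and the frequency localization is genuinely needed; moreover, depending on $s$ --- notably at $s=\frac12$, where $|\xi|^{2s}=|\xi|$ behaves like the wave symbol --- the phase can fail to be elliptic even on a fixed dyadic shell, so one cannot invoke a black-box stationary-phase bound. The weighted decay rate, and with it the precise exponent $\frac{2N-1}{2}$ in \eqref{range}, has to be extracted from a careful analysis of the oscillatory/Bessel integral, including its behaviour near the radial caustic $|x|\sim|t|$. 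A secondary technical point is making the $\ell^{2}$ frequency summation of the first step rigorous, which is where both the gap condition (to balance the $2^{j}$-powers) and the hypotheses $q_{j},r_{j}\geq2$ (for the square-function inequality in space and Minkowski's inequality in time) are used.
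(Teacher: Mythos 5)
The paper offers no proof of Proposition~\ref{prostrichartz}: it is stated as a quoted result from \cite{G-W} and used as a black box. So there is, strictly speaking, no ``paper's own proof'' to compare your argument against. What you have written is a blind reconstruction of the strategy in \cite{G-W}, and as a high-level account it is essentially faithful to that reference.

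The skeleton you describe is indeed the one used there: a Littlewood--Paley reduction to a single frequency shell, with the scaling property of $|\xi|^{2s}$ and the gap condition accounting for the derivative weight $2^{j\theta}$; a radial (Bessel/Hankel) dispersive estimate on the unit shell that yields the extra angular decay $|x|^{-(N-1)/2}$; a $TT^{*}$/weighted-space argument with one-dimensional Hardy--Littlewood--Sobolev in time, and a Christ--Kiselev (or Keel--Tao) step for the retarded integral. You also flag the genuine difficulty correctly: the radial Hessian degenerates at $s=\tfrac12$, so one cannot cite a generic stationary-phase bound on the dyadic shell, and the precise weighted decay near the wave-type caustic $|x|\sim|t|$ is where the numerology $\tfrac{2N-1}{2}$ in \eqref{range} comes from. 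Two small cautions. First, your interpolation step (``interpolating the pointwise kernel bound with $L^2\to L^2$ produces the full family of weighted estimates'') is a bit optimistic as stated; the actual radial estimates require splitting into $|x|\lesssim|t|$ and $|x|\gtrsim|t|$ regimes rather than a one-shot interpolation. Second, your identification of $\tfrac{2N-1}{2}$ as ``the slope of $|x|^{-(N-1)/2}$'' is off by the contribution of the untwisted $N/2$ scaling; in \cite{G-W} it arises as $\tfrac{N}{2}+\tfrac{N-1}{2}$. Neither changes the overall picture, but since the paper itself simply cites \cite{G-W}, your proposal would most appropriately be read as an expository gloss on that reference rather than a self-contained proof.
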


\begin{definition}\label{defnorm}

We  define the following Srichartz norm
$$\|u\|_{S(\Lambda_{s_c})}=\sup_{(q,r)\in\Lambda_{s_c}}\|u\|_{L^qL^r}.
 $$ Let $(q',r')$~be the H\"older~ dual to~$(q,r),$~  and define the dual Strichartz norm
$$\|u\|_{S'(\Lambda_{-s_c})}=\inf_{(q',r')\in\Lambda'_{s_c}}\|u\|_{L^{q'}L^{r'}}=\inf_{(q,r)\in\Lambda_{-s_c}}\|u\|_{L^{q'}L^{r'}}.$$
\end{definition}

\begin{remark}
Notice that if $$s\in[\frac N{2N-1},1)\subset(\frac12,1),$$
the gap condition \eqref{gap} with $\theta=0$
right implies the range condition \eqref{range}, which further means that $\Lambda_0$ is nonempty.
That is we have a full set of 0-level admissible Strichartz estimates without loss of derivatives in radial case.
Moreover, denoting
\begin{align}\label{qc}q_c=r_c=\frac{2N+4s}{N+2s-\gamma},\end{align}
 we check that $(q_c,r_c)\in \Lambda_{s_c}\neq\emptyset$
 is an $s_c$-level admissible pair.

\end{remark}

By Proposition \ref{prostrichartz}, for $\phi,f$ radial, we then have that
$$\|U(t)\phi\|_{S(\Lambda_0)}\leq C\|\phi\|_2$$ and
$$\left\|\int_0^tU(t-t^1)f(\cdot,t^1)dt^1\right\|_{S(\Lambda_0)}\leq C\|f\|_{S'(\Lambda_0)}.$$
Together with Sobolev embedding, we obtain
$$\|U(t)\phi\|_{S(\Lambda_{s_c})}\leq c\|\phi\|_{\dot{H}^{s_c}},$$
$$\left\|\int_0^tU(t-t^1)f(\cdot,t^1)dt^1\right\|_{S(\Lambda_{s_c})}\leq C\|D^{s_c}f\|_{S'(\Lambda_{0})}$$
and
$$\left\|\int_0^tU(t-t^1)f(\cdot,t^1)dt^1\right\|_{S(\Lambda_{s_c})}\leq C\|f\|_{S'(\Lambda_{-s_c})}.$$

Next, we write ~$S(\Lambda_{\theta};I)$~to indicate  its restriction to a time subinterval~$I\subset(-\infty,+\infty).$

\begin{proposition}\label{sd}
(Small data)
Let $\|u_{0}\|_{\dot H^{s_c}}\leq A$ be radial. Then,
there exists  $\delta_{sd}=\delta_{sd}(A)>0$ such that if
~$\|U(t)u_{0}\|_{S(\Lambda_{s_c})}\leq \delta_{sd}, $~then~$ u $~solving \eqref{eq1} is global, and
\begin{eqnarray}
&&\|u\|_{S(\Lambda_{s_c})}\leq
2\|U(t)u_{0}\|_{S(\Lambda_{s_c})},\\
&&\|D^{s_c}u\|_{S(\Lambda_0)}\leq
2c\|u_{0}\|_{\dot H^{s_c}}.
\end{eqnarray}
(Note that by
 the Strichartz estimates, the hypotheses are satisfied if
~$\|u_{0}\|_{\dot H^{s_c}}\leq C\delta_{sd}. $)
\end{proposition}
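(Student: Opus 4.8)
The plan is to run a standard fixed-point argument for the Duhamel map in the Strichartz space $S(\Lambda_{s_c})$, simultaneously controlling the auxiliary norm $\|D^{s_c}u\|_{S(\Lambda_0)}$. First I would define, on the time interval $I=\mathbb R$, the map
\[
\Phi(u)(t)=U(t)u_0+i\int_0^tU(t-t^1)\Big(\tfrac{1}{|x|^\gamma}*|u|^2\Big)u(t^1)\,dt^1,
\]
and the complete metric space
\[
B=\Big\{u:\ \|u\|_{S(\Lambda_{s_c})}\le 2\|U(t)u_0\|_{S(\Lambda_{s_c})},\quad \|D^{s_c}u\|_{S(\Lambda_0)}\le 2c\|u_0\|_{\dot H^{s_c}}\Big\},
\]
with the metric given by the $S(\Lambda_{s_c})$-distance. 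The goal is to choose $\delta_{sd}=\delta_{sd}(A)$ small enough, depending only on $A\ge\|u_0\|_{\dot H^{s_c}}$, so that under the hypothesis $\|U(t)u_0\|_{S(\Lambda_{s_c})}\le\delta_{sd}$ the map $\Phi$ is a contraction on $B$.

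The key estimates are the nonlinear ones. Applying the inhomogeneous Strichartz estimate from Proposition 2.5 (in the form $\big\|\int_0^tU(t-t^1)f\,dt^1\big\|_{S(\Lambda_{s_c})}\le C\|f\|_{S'(\Lambda_{-s_c})}$ and the analogous bound with $D^{s_c}$ falling on $f$), one reduces everything to estimating the Hartree nonlinearity $F(u)=(\tfrac{1}{|x|^\gamma}*|u|^2)u$ in a dual Strichartz norm. Using the Hardy--Littlewood--Sobolev inequality to handle the convolution, followed by Hölder in space and time and the Leibniz rule for $D^{s_c}$ (fractional product rule), together with Sobolev embedding to pass between $\dot H^{s_c}$ and the admissible Lebesgue exponents — in particular exploiting the pair $(q_c,r_c)$ from \eqref{qc} which is $s_c$-admissible — I would obtain bounds of the schematic form
\[
\|F(u)\|_{S'(\Lambda_{-s_c})}\lesssim \|u\|_{S(\Lambda_{s_c})}^2\,\|D^{s_c}u\|_{S(\Lambda_0)},\qquad
\|D^{s_c}F(u)\|_{S'(\Lambda_0)}\lesssim \|u\|_{S(\Lambda_{s_c})}^2\,\|D^{s_c}u\|_{S(\Lambda_0)},
\]
and the corresponding difference estimate $\|F(u)-F(v)\|_{S'(\Lambda_{-s_c})}\lesssim (\|u\|_{S(\Lambda_{s_c})}^2+\|v\|_{S(\Lambda_{s_c})}^2)\|u-v\|_{S(\Lambda_{s_c})}$, possibly with an extra factor of the $D^{s_c}$-norms. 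Feeding the $B$-bounds into the right-hand sides, the nonlinear contributions to $\|\Phi(u)\|_{S(\Lambda_{s_c})}$ are $\lesssim \delta_{sd}^2\,cA$ and to $\|D^{s_c}\Phi(u)\|_{S(\Lambda_0)}$ are $\lesssim \delta_{sd}^2\,cA$; choosing $\delta_{sd}$ so that $C\delta_{sd}^2 A\le\min\{\|U(t)u_0\|_{S(\Lambda_{s_c})},\ cA\}$ and $C(\delta_{sd}^2+\dots)<\tfrac12$ closes the self-mapping and contraction. Global existence follows since the argument is on all of $\mathbb R$, and the two displayed inequalities in the statement are exactly the defining bounds of $B$. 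The parenthetical remark is immediate from $\|U(t)u_0\|_{S(\Lambda_{s_c})}\le c\|u_0\|_{\dot H^{s_c}}$.

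The main obstacle I anticipate is the bookkeeping of exponents: one must verify that all Lebesgue pairs produced by Hardy--Littlewood--Sobolev, Hölder and Sobolev embedding actually land inside the admissible set $\Lambda_{s_c}$ (or $\Lambda_0$ after placing $D^{s_c}$), respecting the somewhat delicate range condition \eqref{range} of the radial Strichartz estimates. The scaling-critical structure forces the powers of $\|u\|_{S(\Lambda_{s_c})}$ and $\|D^{s_c}u\|_{S(\Lambda_0)}$ to add up correctly (total homogeneity three in $u$, with exactly one derivative), and checking that the chosen admissible pair — most naturally $(q_c,r_c)$ — makes the Hölder relations exact, under the standing hypotheses $N\ge 2$, $2s<\gamma<\min\{N,4s\}$ and $s\ge\frac{N}{2N-1}$, is where the real work lies; once the exponent arithmetic is settled, the contraction is routine.
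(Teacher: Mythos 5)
Your plan matches the paper's proof in every essential respect: same Duhamel map $\Phi$, same two-norm ball $B$ controlling $\|\cdot\|_{S(\Lambda_{s_c})}$ and $\|D^{s_c}\cdot\|_{S(\Lambda_0)}$, same reduction of the Hartree nonlinearity via Hardy--Littlewood--Sobolev, H\"older, and the fractional Leibniz rule to the trilinear bound $\lesssim\|v\|_{S(\Lambda_{s_c})}^2\|D^{s_c}v\|_{S(\Lambda_0)}$ using the pair $(q_c,r_c)$, and the same contraction closure with $\delta_{sd}$ depending only on $A$. One small slip in your closing arithmetic: the self-mapping condition should read $C\delta_{sd}A\lesssim1$ rather than $C\delta_{sd}^2A\le\|U(t)u_0\|_{S(\Lambda_{s_c})}$ --- one power of $\|U(t)u_0\|_{S(\Lambda_{s_c})}$ on each side cancels, and then the remaining power is bounded by $\delta_{sd}$; as written, your inequality would make $\delta_{sd}$ depend on $u_0$ rather than on $A$ alone, which is exactly what the proposition must avoid.
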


\begin{proof}
Set
$$\Phi_{u_0}(v)=U(t)u_{0}+i\int_0^tU(t-t^1)(\frac1{|\cdot|^\gamma}\ast|v|^2)v(t^1)dt^1.$$
By the Strichartz estimates, we have
$$\|D^{s_c}\Phi_{u_0}(v)\|_{S(\Lambda_0)}\leq c\|u_{0}\|_{\dot{H}^{s_c}}
+c\|D^{s_c}[(\frac1{|\cdot|^\gamma}\ast|v|^2)v]\|_{L^{q^{'}}L^{r^{'}}}$$
and
$$\|\Phi_{u_0}(v)\|_{S(\Lambda_{s_c})}\leq \|U(t)u_{0}\|_{S(\Lambda_{s_c})}
+c\|D^{s_c}[(\frac1{|\cdot|^\gamma}\ast|v|^2)v]\|_{L^{q^{'}}L^{r^{'}}},$$
with~$(q',r')\in\Lambda'_{0}.$~
Applying the fractional Leibnitz~\cite{ChoHwangHajaiejOzawa2012,kato,kato1995}~, the H\"older inequalities and the Hardy-Littlewood-Sobolev inequalities, we have
\begin{align*}
&\|D^{s_c}[(\frac1{|\cdot|^\gamma}\ast|v|^2)v]\|_{L^{q^{'}}L^{r^{'}}}
\leq c\|D^{s_c}v(\frac1{|\cdot|^\gamma}\ast|v|^2)\|_{L^{q^{'}}L^{r^{'}}}+ c\|[\frac1{|\cdot|^\gamma}\ast Re(\bar{v}D^{s_c}v)]v\|_{L^{q^{'}}L^{r^{'}}}\\
&\leq c\|D^{s_c}v\|_{L^{q_{1}}L^{r_{1}}}\|v\|^2_{L^{q_{2}}L^{r_{2}}}
+c\|v\|_{L^{\gamma_{1}}L^{\rho_{1}}}\|\bar{v}D^{s_c}v\|_{L^{\frac{\gamma}{2}}L^{\frac{\rho}{2}}}\\
&\leq c\|D^{s_c}v\|_{L^{q_{1}}L^{r_{1}}}\|v\|^2_{L^{q_{2}}L^{r_{2}}}
+c\|v\|_{L^{\gamma_{1}}L^{\rho_{1}}}\|v\|_{L^{\gamma_{2}}L^{\rho_{2}}}
\|D^{s_c}v\|_{L^{\gamma_{3}}L^{\rho_{3}}}\\
&\leq c\|v\|^2_{S(\Lambda_{s_c})}\|D^{s_c}v\|_{S(\Lambda_0)},
\end{align*}
where the pairs $(q,r),(q_1,r_1)\in\Lambda_{0},$
$(q_2,r_2),(\gamma_1,\rho_1),(\gamma_2,\rho_2)\in\Lambda_{s_c}$, which indeed can be chosen as
$(q_2,r_2)=(\gamma_1,\rho_1)=(\gamma_2,\rho_2)=(q_c,r_c)\in\Lambda_{s_c}$.
Let $$\delta_{sd}\leq\min\left(\frac{1}{\sqrt{8}c},\frac{1}{8c^3A}\right),$$
and $$B=\left\{v|\|v\|_{S(\Lambda_{s_c})}\leq 2\|U(t)u_{0}\|_{S(\Lambda_{s_c})},
\|D^{s_c}v\|_{S(\Lambda_0)}\leq
2c\|u_{0}\|_{\dot{H}^{s_c}}
\right\}.$$
Then, ~$\Phi_{u_0}:B\rightarrow B$~and is a contraction on~$B$~; thus, the fixed point principle gives the result.

\end{proof}

\begin{proposition}\label{h1scattering}
If $u_{0}\in H^s$ is radial and $u(t)$ is global with  both bounded $s_c$-level Strichartz norm $\|u\|_{S(\Lambda_{s_c})}<\infty$ and
uniformly bounded $H^s$ norm
$\sup\limits_{t\in[0,\infty)}\|u\|_{H^s}\leq B,$
then $u(t)$ scatters in $H^s$ as $t\rightarrow +\infty.$
Specifically, there exists $\phi^{+}\in H^s$ such that
$$\lim_{t\rightarrow+\infty}\|u(t)-U(t)\phi^{+}\|_{H^s}=0.$$

\end{proposition}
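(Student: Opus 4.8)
The plan is the standard argument that a finite ``scattering‑size'' norm $\|u\|_{S(\Lambda_{s_c})}<\infty$, together with a uniform‑in‑time $H^{s}$ bound, forces scattering in $H^{s}$; the radial Strichartz estimates of Proposition~\ref{prostrichartz} — in particular the full family $\Lambda_{0}$ of $0$‑admissible pairs with no loss of derivatives — are exactly what make this argument available in the fractional setting. Throughout I write $F(t)=\big(\tfrac1{|x|^{\gamma}}\ast|u(t)|^{2}\big)u(t)$.

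\emph{Step 1 (global Strichartz bounds for $u$).} Since $\|u\|_{S(\Lambda_{s_c};[0,\infty))}<\infty$, I would split $[0,\infty)=\bigcup_{j=1}^{J}I_{j}$ into finitely many intervals with $\|u\|_{S(\Lambda_{s_c};I_{j})}\le\eta$ for a small absolute constant $\eta$. On each $I_{j}$, write the Duhamel formula based at the left endpoint $t_{j}$ and combine the homogeneous and inhomogeneous $0$‑admissible Strichartz estimates with the trilinear bounds
\[
\Big\|\big(\tfrac1{|\cdot|^{\gamma}}\ast|v|^{2}\big)v\Big\|_{S'(\Lambda_{0};I)}\le c\|v\|_{S(\Lambda_{s_c};I)}^{2}\|v\|_{S(\Lambda_{0};I)},\qquad
\Big\|D^{s}\!\big[\big(\tfrac1{|\cdot|^{\gamma}}\ast|v|^{2}\big)v\big]\Big\|_{S'(\Lambda_{0};I)}\le c\|v\|_{S(\Lambda_{s_c};I)}^{2}\|D^{s}v\|_{S(\Lambda_{0};I)},
\]
both obtained exactly as the nonlinear estimate in the proof of Proposition~\ref{sd} (fractional Leibniz rule, H\"older, Hardy--Littlewood--Sobolev; the second is scale‑consistent precisely because $s_c=\tfrac{\gamma-2s}{2}$). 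Using $\|u(t_{j})\|_{2}\le B$ and $\|u(t_{j})\|_{\dot H^{s}}\le B$, a standard continuity/bootstrap argument on $I_{j}$ gives $\|u\|_{S(\Lambda_{0};I_{j})}+\|D^{s}u\|_{S(\Lambda_{0};I_{j})}\le CB$ once $\eta$ is small; summing over the finitely many $I_{j}$ yields $\|u\|_{S(\Lambda_{0};[0,\infty))}+\|D^{s}u\|_{S(\Lambda_{0};[0,\infty))}<\infty$.

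\emph{Step 2 (scattering state and convergence).} Define $\phi^{+}:=u_{0}+i\int_{0}^{\infty}U(-t')F(t')\,dt'$. From the Duhamel formula, $U(-\sigma)u(\sigma)=u_{0}+i\int_{0}^{\sigma}U(-t')F(t')\,dt'$, so for $0<t<\tau$,
\[
U(-t)u(t)-U(-\tau)u(\tau)=-i\int_{t}^{\tau}U(-t')F(t')\,dt'.
\]
Applying the dual Strichartz estimates on $[t,\tau]$ — using $(\infty,2)\in\Lambda_{0}$ to recover the $L^{2}$ norm and, after commuting $D^{s}$, the $\dot H^{s}$ norm at time $0$ — together with the trilinear bounds of Step 1 and monotonicity of the norms in the interval,
\[
\|U(-t)u(t)-U(-\tau)u(\tau)\|_{H^{s}}\le c\,\|u\|_{S(\Lambda_{s_c};[t,\infty))}^{2}\big(\|u\|_{S(\Lambda_{0};[0,\infty))}+\|D^{s}u\|_{S(\Lambda_{0};[0,\infty))}\big).
\]
The right‑hand side tends to $0$ as $t\to\infty$, since $\|u\|_{S(\Lambda_{s_c};[t,\infty))}\to0$ by finiteness of the total norm while the last factor is bounded by Step 1. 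Hence $\{U(-t)u(t)\}_{t\ge0}$ is Cauchy in $H^{s}$, the integral defining $\phi^{+}$ converges in $H^{s}$, so $\phi^{+}\in H^{s}$, and letting $\tau\to\infty$ with $t$ fixed gives $\|u(t)-U(t)\phi^{+}\|_{H^{s}}=\|U(-t)u(t)-\phi^{+}\|_{H^{s}}\to0$, which is the claim.

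\emph{Main obstacle.} The one genuinely technical point is the $D^{s}$ trilinear estimate: after the Leibniz rule and Hardy--Littlewood--Sobolev for the Riesz potential $\tfrac1{|x|^{\gamma}}\ast(\cdot)$, one must place the derivative‑carrying factor in a $0$‑admissible pair and the two remaining factors in $s_c$‑admissible pairs, with \emph{all} pairs obeying the gap and range conditions of Definition~\ref{defadmissible}; this forces a careful bookkeeping of H\"older exponents and is exactly where the radial hypothesis enters (it is what makes $\Lambda_{0}$ a full, nonempty family, with no loss of derivatives). A minor secondary point is that the top‑order factor appears linearly in both trilinear bounds, which is precisely what lets the bootstrap of Step 1 close.
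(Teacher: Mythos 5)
Your proof follows essentially the same route as the paper's: define $\phi^+$ via the Duhamel integral, bound the tail $\int_t^\infty U(t-t')F(t')\,dt'$ in $H^s$ using the finite scattering norm together with a finite‑interval bootstrap that first establishes $\|D^\alpha u\|_{L^q L^r}<\infty$ for $0\le\alpha\le s$ and a $0$‑admissible pair, and then send $t\to\infty$. One small correction: you cannot in general partition $[0,\infty)$ so that the full sup‑norm $\|u\|_{S(\Lambda_{s_c};I_j)}\le\eta$ on each $I_j$, because $\Lambda_{s_c}$ contains pairs with $q=\infty$ (e.g.\ $L^\infty_t L^{\frac{2N}{N-2s_c}}_x$), and the restriction of an $L^\infty_t$ norm to a short time interval need not be small. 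What you actually need — and what the paper uses — is a partition making the single norm $\|u\|_{L^{q_c}_{I_j}L^{r_c}}<\delta$ small; this is possible because $q_c<\infty$, and it suffices since in the trilinear bound only the two non‑derivative factors must carry the small norm, while the derivative‑bearing factor is absorbed by the bootstrap.
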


\begin{proof}
We can obtain from the integral equation
\begin{align}\label{203}
u(t)=U(t)u_{0}+i\int_0^tU(t-t^1)(\frac1{|\cdot|^\gamma}\ast|u|^2)u(t^1)dt^1
\end{align}
that
\begin{align}\label{2.03}
u(t)-U(t)\phi^{+}=-i\int_t^{\infty}U(t-t^1)(\frac1{|\cdot|^\gamma}\ast|u|^2)u(t^1)dt^1,
\end{align}
where $\phi^{+}=u_{0}+i\int_0^{\infty}U(-t^1)(\frac1{|\cdot|^\gamma}\ast|u|^2)u(t^1)dt^1$.
By the Hardy-Littlewood-Sobolev inequality and the Strichartz estimates, for $0\leq\alpha\leq s$,  
there exist some $(q,r)\in\Lambda_0$, $(q_1,r_1)\in\Lambda'_{0}$ such that
\begin{align}\label{204}
\left\|D^\alpha\left(\int_{I}U(t-s)\left((\frac1{|\cdot|^\gamma}\ast|u|^2)u(s,x)\right)ds\right)\right\|_{L^{q}_{I}L^{r}} \nonumber
&\leq C\left\|D^\alpha\left((\frac1{|\cdot|^\gamma}\ast|u|^2)u\right)\right\|_{L^{q_1}_{I}L^{r_1}}\\ \nonumber
&\leq C\|D^\alpha u\|_{L_I^q L^r}\|\frac1{|\cdot|^\gamma}\ast|u|^2\|_{L^{q_2}_{I}L^{r_2}}\\ 
&\leq C\|D^\alpha u\|_{L_I^q L^r}\|u\|_{L^{q_c}_{I}L^{r_c}}^2,
\end{align}
where $I\subset[0,+\infty)$, $$\frac1{q_1}=\frac1{q_2}+\frac1q=\frac{2}{q_c}+\frac1q,\ \ \frac1{r_1}=\frac1r+\frac1{r_2}=\frac1r+
\frac\gamma N+\frac{2}{r_c}-1.$$
Since  $\|u\|_{L^{q_c}_{[0,\infty)}L^{r_c}}<\infty$, we can partition $[0,+\infty)$ into a union of $I_j=[t_j,t_{j+1}],1\leq j\leq N$, such that
for every $1\leq j\leq N$,  $\|u\|_{L^{q_c}_{I_j}L^{r_c}}<\delta (\delta$ is sufficiently small). Thus, by \eqref{203} and \eqref{204},
for $0\leq\alpha\leq s, \forall 1\leq j\leq N$,
\begin{align*}
\|D^\alpha u\|_{L^{q}_{I_j}L^{r}}
&\leq
\|U(t)u(t_j)\|_{L^{q}_{I_j}L^{r}}
+
\left\|D^\alpha\left(\int_{I_j}U(t-s)\left((\frac1{|\cdot|^\gamma}\ast|u|^2)u(s,x)\right)ds\right)\right\|_{L^{q}_{I_j}L^{r}}\\
&\leq \|U(t)u(t_j)\|_{L^{q}_{I_j}L^{r}}+ C\|D^\alpha u\|_{L^q_{I_j} L^r}\|u\|_{L^{q_c}_{I_j}L^{r_c}}^2\\
&\leq
CB+C\delta^2\|D^\alpha u\|_{L^q_{I_j} L^r}.
\end{align*}
By choosing $\delta$ such that $C\delta^2<\frac12$, we see that 
$\|D^\alpha u\|_{L^{q}_{I_j}L^{r}}<\infty,\ 1\leq j\leq N.$
So we have $$\|D^\alpha u\|_{L^{q}L^{r}}<\infty.$$
By \eqref{2.03}, we have for  $0\leq\alpha\leq s$,
\begin{align*}
\|D^\alpha (u(t)-U(t)\phi^{+})\|_{2}\leq \|u\|_{L^{q_c}_{[t,\infty)}L^{r_c}}^2\|D^\alpha u\|_{L^{q}_{[t,\infty)}L^{r}}.
\end{align*}
Taking $\alpha=0$, $\alpha=s$ in the above inequality and
sending  $t\rightarrow+\infty$,  we obtain
 the claim.

\end{proof}

\begin{proposition}\label{properturb}
(Long-time perturbation theory) For any given $A\gg 1$, there exist $\epsilon_0=\epsilon_0(A)\ll 1$ and
$c=c(A)$ such that the following holds: Let $u=u(t,x)\in H^s$ be radial and solve (\ref{eq1})
for all $t$. Let $\tilde{u}=\tilde{u}(t,x)\in H^s$ for all $t$, and set
$$e\equiv i\tilde{u}_{t}-(-\Delta)^s \tilde{u}+(\frac1{|\cdot|^\gamma}\ast|\tilde{u}|^{2})\tilde{u}.$$
If $$\|\tilde{u}\|_{S(\Lambda_{s_c})}\leq A,\ \   \|e\|_{S'(\Lambda_{-s_c})}\leq \epsilon_0\ \
{\rm and}\ \ \|U(t-t_0)(u(t_0)-\tilde{u}(t_0)\|_{S(\Lambda_{s_c})}\leq \epsilon_0,$$
then$$\|u\|_{S(\Lambda_{s_c})}\leq c=c(A)<\infty.$$
\end{proposition}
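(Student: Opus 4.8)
The plan is to run the standard long-time stability argument (Keraani; Kenig--Merle) within the radial Strichartz framework of Proposition~\ref{prostrichartz}. Write $N(v)=(\frac1{|\cdot|^{\gamma}}\ast|v|^2)v$. From the integral form of~(\ref{eq1}), $u(t)=U(t-t_0)u(t_0)+i\int_{t_0}^tU(t-t')N(u)(t')\,dt'$ and $\tilde u(t)=U(t-t_0)\tilde u(t_0)+i\int_{t_0}^tU(t-t')\big(N(\tilde u)-e\big)(t')\,dt'$, so $w:=u-\tilde u$ satisfies
\[ w(t)=U(t-t_0)w(t_0)+i\int_{t_0}^tU(t-t')\big(N(\tilde u+w)-N(\tilde u)\big)(t')\,dt'+i\int_{t_0}^tU(t-t')e(t')\,dt'. \]
After a time translation we take $t_0=0$, and by time reversal it suffices to argue on $[0,\infty)$; all Strichartz estimates used below are those displayed in and after Proposition~\ref{prostrichartz}.

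The nonlinear input is the same trilinear computation (Hardy--Littlewood--Sobolev, fractional Leibniz, H\"older) as in the proof of Proposition~\ref{sd}: on any interval $I$, $\|D^{s_c}N(v)\|_{S'(\Lambda_0;I)}\leq C\|v\|_{S(\Lambda_{s_c};I)}^2\|D^{s_c}v\|_{S(\Lambda_0;I)}$, and, since each term of $N(\tilde u+w)-N(\tilde u)$ is trilinear in $\{\tilde u,w\}$ with at least one $w$-factor,
\[ \big\|D^{s_c}\big(N(\tilde u+w)-N(\tilde u)\big)\big\|_{S'(\Lambda_0;I)}\leq C\big(\|\tilde u\|_{S(\Lambda_{s_c};I)}+\|w\|_{S(\Lambda_{s_c};I)}\big)^2\|D^{s_c}w\|_{S(\Lambda_0;I)}+C\big(\|\tilde u\|_{S(\Lambda_{s_c};I)}+\|w\|_{S(\Lambda_{s_c};I)}\big)\|w\|_{S(\Lambda_{s_c};I)}\|D^{s_c}\tilde u\|_{S(\Lambda_0;I)}. \]
Fix a small absolute constant $\delta$ with $C\delta^2<\frac12$ and split $[0,\infty)$ into $M=M(A,\delta)$ consecutive intervals $I_j=[t_j,t_{j+1})$ with $\|\tilde u\|_{S(\Lambda_{s_c};I_j)}\leq\delta$. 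Applying Strichartz and the first estimate on each $I_j$ and absorbing the resulting $C\delta^2$ factor gives $\|D^{s_c}\tilde u\|_{S(\Lambda_0;I_j)}\leq 2C\|D^{s_c}\tilde u(t_j)\|_2+C\epsilon_0$, a geometric recursion in $j$ which starts from the finite quantity $\|D^{s_c}\tilde u(0)\|_2\leq\|\tilde u(0)\|_{H^s}$ (finite because $s_c<s$, as $\gamma<4s$); iterating it over the finitely many intervals upgrades the hypothesis $\|\tilde u\|_{S(\Lambda_{s_c})}\leq A$ to a bound $\|D^{s_c}\tilde u\|_{S(\Lambda_0)}\leq L$.

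Now I would propagate the smallness of $w$ across the same partition. On $I_0$, feeding $\|U(t)w(0)\|_{S(\Lambda_{s_c})}\leq\epsilon_0$, $\|e\|_{S'(\Lambda_{-s_c})}\leq\epsilon_0$, $\|\tilde u\|_{S(\Lambda_{s_c};I_0)}\leq\delta$ and $\|D^{s_c}\tilde u\|_{S(\Lambda_0)}\leq L$ into the Strichartz estimates for the Duhamel formula for $w$, a continuity/bootstrap argument --- the pure-$w$ contributions are quadratic in $\|w\|_{S(\Lambda_{s_c})}+\|D^{s_c}w\|_{S(\Lambda_0)}$ and are absorbed once $\epsilon_0$ is small relative to $\delta$ and $L$ --- gives $\|w\|_{S(\Lambda_{s_c};I_0)}+\|D^{s_c}w\|_{S(\Lambda_0;I_0)}\leq C_1\epsilon_0$ with $C_1=C_1(L)$ (here $\|D^{s_c}w(0)\|_2=\|D^{s_c}(u(0)-\tilde u(0))\|_2<\infty$, again by $s_c<s$). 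On $I_j$ the relevant ``data term'' $U(t-t_j)w(t_j)$ is controlled in $S(\Lambda_{s_c})$ by stacking the Duhamel contributions of $I_0,\dots,I_{j-1}$, and the same bootstrap yields $\|w\|_{S(\Lambda_{s_c};I_j)}+\|D^{s_c}w\|_{S(\Lambda_0;I_j)}\leq C_j\epsilon_0$, where the constants $C_j$ obey a recursion that grows at most geometrically in $j$; hence $C_j\leq C_M=C_M(A)$ for all $j\leq M$. Choosing $\epsilon_0=\epsilon_0(A)$ so small that $C_M\epsilon_0\leq 1$ closes the induction, and summing the $M$ interval bounds gives $\|w\|_{S(\Lambda_{s_c})}\leq c(A)$; therefore $\|u\|_{S(\Lambda_{s_c})}\leq\|\tilde u\|_{S(\Lambda_{s_c})}+\|w\|_{S(\Lambda_{s_c})}\leq A+c(A)=c(A)$, as claimed.

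The dispersive ingredients are entirely Proposition~\ref{prostrichartz} and its corollaries, and the nonlinear ingredient is the fractional Leibniz computation already performed in the proof of Proposition~\ref{sd}; the two points that genuinely need care are (a) verifying that the fractional Leibniz rule for the \emph{nonlocal} Hartree nonlinearity really produces the ``two $S(\Lambda_{s_c})$ factors times one $D^{s_c}$-in-$S(\Lambda_0)$ factor'' splitting uniformly on each subinterval, so that short intervals carry small quadratic constants, and (b) bookkeeping the accumulation of constants over the $M(A)$ subintervals so that $\epsilon_0$ and $c$ ultimately depend on $A$ alone. I expect (b) --- the careful inductive propagation, matching the arguments of~\cite{km,radial} --- to be the more delicate of the two.
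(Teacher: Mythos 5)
Your proposal departs from the paper's route in a way that introduces a genuine gap. The paper's proof works entirely at the level of the scale-invariant norms: it treats $w$ in $S(\Lambda_{s_c})$ only, uses the inhomogeneous Strichartz estimate in the form $\big\|\int_0^t U(t-t')f\,dt'\big\|_{S(\Lambda_{s_c})}\leq C\|f\|_{S'(\Lambda_{-s_c})}$, and bounds every nonlinear term by Hardy--Littlewood--Sobolev and H\"older alone, e.g. $\|(|\cdot|^{-\gamma}\ast|\tilde u|^2)w\|_{L^{q_1'}L^{r_1'}}\leq\|\tilde u\|_{L^{q_c}L^{r_c}}^2\|w\|_{L^{q_2}L^{r_2}}$ with $(q_1,r_1)\in\Lambda_{-s_c}$ and $(q_2,r_2)\in\Lambda_{s_c}$. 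No fractional Leibniz rule and no $D^{s_c}$ derivatives are taken anywhere, which is exactly what makes the argument close under the stated hypotheses.

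You instead insist on the $D^{s_c}$-level fractional Leibniz bookkeeping from Proposition~\ref{sd} and try to propagate both $\|w\|_{S(\Lambda_{s_c})}$ and $\|D^{s_c}w\|_{S(\Lambda_0)}$ across the partition. That route requires two quantities that the hypotheses do not control. First, you need $\|D^{s_c}\tilde u\|_{S(\Lambda_0)}$. The hypotheses only give $\|\tilde u\|_{S(\Lambda_{s_c})}\leq A$ and $\tilde u(t)\in H^s$ pointwise in time; there is no uniform $\dot H^{s_c}$ or $H^s$ bound on $\tilde u$, and $\|\tilde u\|_{S(\Lambda_{s_c})}\leq A$ cannot be upgraded to a $\dot H^{s_c}$ bound (the Sobolev embedding runs the wrong way). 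Second, and more decisively, your ``geometric recursion'' for $\|D^{s_c}\tilde u\|_{S(\Lambda_0;I_j)}$ and the corresponding estimate for $\|D^{s_c}w\|_{S(\Lambda_0;I_j)}$ both invoke the Duhamel contribution of the error $e$ through the derivative-level dual estimate, which needs $\|D^{s_c}e\|_{S'(\Lambda_0)}$ to be small. The hypothesis is $\|e\|_{S'(\Lambda_{-s_c})}\leq\epsilon_0$, which does not dominate $\|D^{s_c}e\|_{S'(\Lambda_0)}$ (again, the relevant embedding points the other way). So the step ``$\|D^{s_c}\tilde u\|_{S(\Lambda_0;I_j)}\leq 2C\|D^{s_c}\tilde u(t_j)\|_2+C\epsilon_0$'' and the $D^{s_c}$-part of your $w$-bootstrap are both unjustified.

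The fix is to drop the derivative-level estimates entirely, which is what the paper does. In Proposition~\ref{sd} the $D^{s_c}$ bound is legitimate because $u_0$ is explicitly small in $\dot H^{s_c}$ and there is no error term; in the long-time perturbation theorem the hypotheses are deliberately weaker (error only in $S'(\Lambda_{-s_c})$, no $\dot H^{s_c}$ smallness of $\tilde u(0)$), and that weaker setting is exactly what the derivative-free $S(\Lambda_{s_c})\leftarrow S'(\Lambda_{-s_c})$ estimate is designed for. Your higher-level strategy --- partition into $N(A)$ subintervals with $\|\tilde u\|_{S(\Lambda_{s_c};I_j)}\leq\delta$, bootstrap $w$ on each subinterval, propagate the linear data piece via Duhamel at each $t_{j+1}$, track the geometric growth of constants and choose $\epsilon_0=\epsilon_0(A)$ small enough at the end --- is the same as the paper's and is fine; it is the choice of norms inside each interval that needs to change.
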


\begin{proof}
Define $w=u-\tilde{u}$. Then, $w$ solves the equation
$$iw_t-(-\Delta)^s w+(\frac1{|\cdot|^\gamma}\ast|w+\tilde{u}|^2)w+(\frac1{|\cdot|^\gamma}\ast|w+\tilde{u}|^2)\tilde{u}-(\frac1{|\cdot|^\gamma}\ast|\tilde{u}|^2)\tilde{u}+e=0.$$
Specifically,
\begin{equation}\label{2.5}\begin{array}{lll}\vspace{0.3cm}
&iw_t-(-\Delta)^s w+(\frac1{|\cdot|^\gamma}\ast|w|^2)w+(\frac1{|\cdot|^\gamma}\ast (\bar{w}\tilde{u}))w+(\frac1{|\cdot|^\gamma}\ast (w\bar{\tilde{u}}))w\\
&\qquad+(\frac1{|\cdot|^\gamma}\ast|w|^2)\tilde{u} +(\frac1{|\cdot|^\gamma}\ast|\tilde{u}|^2)w+(\frac1{|\cdot|^\gamma}\ast (\bar{w}\tilde{u}))\tilde{u}+(\frac1{|\cdot|^\gamma}\ast (w\bar{\tilde{u}}))\tilde{u}+e=0.
\end{array}\end{equation}
Because $\|\tilde{u}\|_{S(\Lambda_{s_c})}\leq A,$ we can partition $[t_0,\infty)$
into $N=N(A)$ intervals $I_j=[t_j,t_{j+1})$  such that for each $0\leq j\leq N-1$,
  $\|\tilde{u}\|_{S(\Lambda_{s_c};I_j)}<\delta$ with the sufficiently small $\delta$ to be specified later.
 The integral equation of \ref{2.5} with initial time $t_j$ is
\begin{align}\label{2.6}
w(t)=U(t-t_j)w(t_j)+i\int_{t_j}^tU(t-s)W(\cdot,s)ds,
\end{align}
where
\begin{align*}
W=&(\frac1{|\cdot|^\gamma}\ast|w|^2)w+(\frac1{|\cdot|^\gamma}\ast (\bar{w}\tilde{u}))w+(\frac1{|\cdot|^\gamma}\ast (w\bar{\tilde{u}}))w\\
&+(\frac1{|\cdot|^\gamma}\ast|w|^2)\tilde{u}+(\frac1{|\cdot|^\gamma}\ast|\tilde{u}|^2)w+(\frac1{|\cdot|^\gamma}\ast (\bar{w}\tilde{u}))\tilde{u}+(\frac1{|\cdot|^\gamma}\ast (w\bar{\tilde{u}}))\tilde{u}+e.
\end{align*}
Applying the inhomogeneous Strichartz estimate \eqref{inhomo} on $I_j$, we have for  $(q_1,r_1)\in\Lambda_{-s_c}$
\begin{align}\label{2.7}
\|w\|_{S(\Lambda_{s_c};I_j)}&\leq \|e^{i(t-t_j)\Delta}w(t_j)\|_{S(\Lambda_{s_c};I_j)}
+c\|(\frac1{|\cdot|^\gamma}\ast|w|^2)w\|_{L_{I_j}^{q_1'}L^{r_1'}}\\ \nonumber
&+c\|(\frac1{|\cdot|^\gamma}\ast (\bar{w}\tilde{u}))w\|_{L_{I_j}^{q_1'}L^{r_1'}}
+c\|(\frac1{|\cdot|^\gamma}\ast (w\bar{\tilde{u}}))w\|_{L_{I_j}^{q_1'}L^{r_1'}}
+c\|(\frac1{|\cdot|^\gamma}\ast|w|^2)\tilde{u}\|_{L_{I_j}^{q_1'}L^{r_1'}}\\ \nonumber
&+c\|(\frac1{|\cdot|^\gamma}\ast|\tilde{u}|^2)w\|_{L_{I_j}^{q_1'}L^{r_1'}}
+c\|(\frac1{|\cdot|^\gamma}\ast (\bar{w}\tilde{u}))\tilde{u}\|_{L_{I_j}^{q_1'}L^{r_1'}}
+c\|(\frac1{|\cdot|^\gamma}\ast (w\bar{\tilde{u}}))\tilde{u}\|_{L_{I_j}^{q_1'}L^{r_1'}}\\ \nonumber
&+\|e\|_{S'(\Lambda_{-s_c})}.
\end{align}
Under the condition $\frac{N}{2N-1}\leq s<1$, we easily obtain that
any $(q_i,r_i),i=1,2$ solving
\begin{equation}\label{qiri}
\begin{cases}
\frac1{q_1'}=\frac2{q_c}+\frac1{q_2}=1-\frac\gamma{N+2s}+\frac1{q_2},\\
\frac1{r_1'}=\frac\gamma N+\frac2{r_c}+\frac1{r_2}-1=\frac\gamma N-\frac\gamma{N+2s}+\frac1{r_2}\\
\end{cases}
 \end{equation}
 should satisfy the range condition \eqref{range}.
Hence, for the above pair $(q_1,r_1)\in\Lambda_{-s_c}$,
we can find $(q_2,r_2)\in\Lambda_{s_c}$ and apply the
 Hardy-Littlewood-Sobolev inequality and H\"older inequalities
to find that
\[\|(\frac1{|\cdot|^\gamma}\ast|\tilde{u}|^2)w\|_{L_{I_j}^{q_1'}L^{r_1'}}
\leq\|\tilde{u}\|_{L_{I_j}^{q_c}L^{r_c}}^2\|w\|_{L_{I_j}^{q_2}L^{r_2}} 
\leq
\|\tilde{u}\|_{S(\Lambda_{s_c};I_j)}^2\|w\|_{S(\Lambda_{s_c};I_j)}
\leq \delta^2\|w\|_{S(\Lambda_{s_c};I_j)},\]
\begin{align*}
\|(\frac1{|\cdot|^\gamma}\ast|w|^2)\tilde{u}\|_{L_{I_j}^{q_1'}L^{r_1'}}
&\leq\|\tilde{u}\|_{L_{I_j}^{q_2}L^{r_2}}\|w\|_{L_{I_j}^{q_c}L^{r_c}}^2
&\leq
\|\tilde{u}\|_{S(\Lambda_{s_c};I_j)}\|w\|_{S(\Lambda_{s_c};I_j)}^2
\leq \delta\|w\|_{S(\Lambda_{s_c};I_j)}^2.
\end{align*}
Similarly, we have other terms estimated in the same way, and we substitute all the estimates in \eqref{2.7} to obtain
\begin{align}\label{2.8}
\|w\|_{S(\Lambda_{s_c};I_j)}\leq &\|U(t-t_j)w(t_j)\|_{S(\Lambda_{s_c};I_j)}
+c\delta^2\|w\|_{S(\Lambda_{s_c};I_j)}\\ \nonumber
&+c\delta\|w\|_{S(\Lambda_{s_c};I_j)}
+c\|w\|_{S(\Lambda_{s_c};I_j)}^3+c\|e\|_{S'(\dot{H}^{-s_c};I_j)}\\ \nonumber
\leq&\|U(t-t_j)w(t_j)\|_{S(\Lambda_{s_c};I_j)}
+c\delta^2\|w\|_{S(\Lambda_{s_c};I_j)}\\ \nonumber
&+c\delta\|w\|^2_{S(\Lambda_{s_c};I_j)}
+c\|w\|_{S(\Lambda_{s_c};I_j)}^3+c\epsilon_0.
\end{align}
Now, if  $\delta\leq\min(1,\frac{1}{2\sqrt{c}})$ and
\begin{align}\label{2.9}
\|U(t-t_j)w(t_j)\|_{S(\Lambda_{s_c};I_j)}+c\epsilon_0\leq\min(1,\frac{1}{8\sqrt{c}}),
\end{align}
we obtain
\begin{align}\label{2.10}
\|w\|_{S(\Lambda_{s_c};I_j)}\leq 2\|U(t-t_j)w(t_j)\|_{S(\Lambda_{s_c};I_j)}
+2c\epsilon_0.
\end{align}
Next, we take $t=t_{j+1}$ in \eqref{2.6} and apply $U(t-t_{j+1})$ to both sides. We then obtain
\begin{align}\label{2.11}
U(t-t_{j+1})w(t_{j+1})=U(t-t_{j})w(t_j)+i\int_{t_j}^{t_{j+1}}U(t-s)W(\cdot,s)ds.
\end{align}
Note that the Duhamel integral is confined to $I_j$.  Similar to \eqref{2.8}, we have the estimate
\begin{align*}
\|U(t-t_{j+1})w(t_{j+1})\|_{S(\Lambda_{s_c})}
\leq&\|e^{i(t-t_j)\Delta}w(t_j)\|_{S(\Lambda_{s_c})}
+c\delta^2\|w\|_{S(\Lambda_{s_c};I_j)}\\ \nonumber
&+c\delta\|w\|_{S(\Lambda_{s_c};I_j)}
+c\|w\|_{S(\Lambda_{s_c};I_j)}^3+c\epsilon_0.
\end{align*}
Then,  \eqref{2.9} and \eqref{2.10} imply
\begin{align*}
\|U(t-t_{j+1})w(t_{j+1})\|_{S(\Lambda_{s_c})}
\leq&2\|U(t-t_{j})w(t_j)\|_{S(\Lambda_{s_c})}
+2c\epsilon_0.
\end{align*}
Now, iterate the beginning with $j=0$, and we obtain
\begin{align*}
\|U(t-t_{j})w(t_{j})\|_{S(\Lambda_{s_c})}
\leq&2^j\|U(t-t_{0})w(t_0)\|_{S(\Lambda_{s_c})}
+(2^j-1)2c\epsilon_0\leq2^{j+2}c\epsilon_0.
\end{align*}
Because the second part of \eqref{2.9} is needed for each $I_j$, $0\leq j\leq N-1$, we require that
\begin{align}\label{2.12}
2^{N+2}c\epsilon_0\leq\min(1,\frac{1}{2\sqrt{6c}}).
\end{align}
 Recall that $\delta$ is an absolute constant
to satisfy \eqref{2.9};  the given $A$ determines the number of time intervals $N$. Then, by \eqref{2.12},
$\epsilon_0$ is determined by $N=N(A)$. Thus, the iteration completes our proof.

\end{proof}

\newpage

\section{Variational Characteristic and Invariant Sets}

In this section, we first recall some variational characteristic of the ground state for Eq. (\ref{eq1}) given in \cite{Zhu2016}. Then, we can construct the invariant flows generated by the Cauchy problem of (\ref{eq1}) and (\ref{1.2}). Finally, we give some refined estimates of the invariant set of the global solutions, which are crucial for proving that  the global solutions will be scattering.

\begin{lemma}  (see \cite{Zhu2016}) Let $N\geq 2$, $0<s<1$ and $0<\gamma<\min\{N,4s\}$.  Suppose that
 $Q$ is  the ground-state
solution  of   (\ref{eqQ}). Then, we have  the following
Pohozaev identities:
\begin{equation}\label{P1}
\int
\overline{Q}(-\triangle)^sQdx+
\int|Q|^2dx-\int\int\frac{|Q(x)|^2|Q(y)|^2}{|x-y|^{\gamma}}dxdy=0,\end{equation}
 \begin{equation}\label{P2}
 \frac{N-2s}{2}\int \overline{Q}(-\triangle)^sQdx+\frac N2\int|Q|^2dx-\frac{2N-\gamma}{4}
 \int\int\frac{|Q(x)|^2|Q(y)|^2}{|x-y|^{\gamma}}dxdy=0.
 \end{equation}

\end{lemma}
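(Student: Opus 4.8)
My plan is to read \eqref{P1} and \eqref{P2} as the two identities obtained by testing the ground-state equation \eqref{eqQ} against $\overline Q$ and against the infinitesimal dilation $x\cdot\nabla Q$, the second of which I would carry out at the level of the action functional in order to avoid commutator estimates for $(-\triangle)^s$.

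\textbf{The first identity.} Multiply \eqref{eqQ} by $\overline Q$, integrate over $\mathbb R^N$, and take the real part. By Plancherel, $\int\overline Q(-\triangle)^sQ\,dx=\int|\xi|^{2s}|\widehat Q(\xi)|^2\,d\xi=\|Q\|_{\dot H^s}^2\in[0,\infty)$; the linear term contributes $\|Q\|_2^2$; and $\int\bigl(\tfrac1{|x|^\gamma}\ast|Q|^2\bigr)|Q|^2\,dx=\iint\frac{|Q(x)|^2|Q(y)|^2}{|x-y|^\gamma}\,dx\,dy$. Each quantity is finite because $Q\in H^s$ and, for the double integral, by the Hardy--Littlewood--Sobolev inequality combined with \eqref{G-N}. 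This is precisely \eqref{P1}.

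\textbf{The second identity.} Set $S(u)=\tfrac12\|u\|_{\dot H^s}^2+\tfrac12\|u\|_2^2-\tfrac14\iint\frac{|u(x)|^2|u(y)|^2}{|x-y|^\gamma}\,dx\,dy$, so that \eqref{eqQ} reads $S'(Q)=0$. For $\lambda>0$ put $Q^\lambda(x)=Q(\lambda x)$. Changes of variables (and Plancherel for the kinetic term) give $\|Q^\lambda\|_{\dot H^s}^2=\lambda^{2s-N}\|Q\|_{\dot H^s}^2$, $\|Q^\lambda\|_2^2=\lambda^{-N}\|Q\|_2^2$, and $\iint\frac{|Q^\lambda(x)|^2|Q^\lambda(y)|^2}{|x-y|^\gamma}\,dx\,dy=\lambda^{\gamma-2N}\iint\frac{|Q(x)|^2|Q(y)|^2}{|x-y|^\gamma}\,dx\,dy$. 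Hence $g(\lambda):=S(Q^\lambda)=\tfrac12\lambda^{2s-N}A+\tfrac12\lambda^{-N}B-\tfrac14\lambda^{\gamma-2N}C$, with $A=\|Q\|_{\dot H^s}^2$, $B=\|Q\|_2^2$, $C=\iint\frac{|Q(x)|^2|Q(y)|^2}{|x-y|^\gamma}\,dx\,dy$, which is a smooth function of $\lambda>0$. Since $\partial_\lambda Q^\lambda|_{\lambda=1}=x\cdot\nabla Q$ and $S'(Q)=0$, the chain rule gives $g'(1)=\langle S'(Q),\,x\cdot\nabla Q\rangle=0$; differentiating the explicit formula gives $g'(1)=\tfrac{2s-N}{2}A-\tfrac N2B-\tfrac{\gamma-2N}{4}C$. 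Equating and multiplying by $-1$ yields $\tfrac{N-2s}{2}A+\tfrac N2B-\tfrac{2N-\gamma}{4}C=0$, and substituting $A=\int\overline Q(-\triangle)^sQ\,dx$ from \eqref{P1} gives \eqref{P2}.

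\textbf{Main obstacle.} The only non-formal step is $g'(1)=\langle S'(Q),x\cdot\nabla Q\rangle=0$: applying the chain rule here requires $\lambda\mapsto Q^\lambda$ to be differentiable at $\lambda=1$ in $H^s$, i.e. $x\cdot\nabla Q\in H^s$, together with $S\in C^1$ near $Q$; the latter follows from the Hardy--Littlewood--Sobolev inequality, and the former from the regularity and spatial decay of the ground state available in \cite{Hajaiej2013,Zhu2016}. If one prefers to avoid quoting the full regularity theory, an equivalent route is to test the weak form of \eqref{eqQ} against a truncation $\chi(x/R)\,(x\cdot\nabla Q)$, with $\chi$ a smooth cutoff, and let $R\to\infty$, all boundary and commutator errors being controlled by the same $H^s$ and HLS bounds. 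A fully hands-on alternative is to pair \eqref{eqQ} directly with $x\cdot\nabla Q$ (taking $Q$ real, as one may): the mass and potential terms reduce to elementary integrations by parts, $\int Q\,(x\cdot\nabla Q)\,dx=-\tfrac N2\|Q\|_2^2$ and $\int\bigl(\tfrac1{|x|^\gamma}\ast|Q|^2\bigr)Q\,(x\cdot\nabla Q)\,dx=\tfrac{\gamma-2N}{4}C$, while $\int\bigl((-\triangle)^sQ\bigr)(x\cdot\nabla Q)\,dx=-\tfrac{N-2s}{2}\|Q\|_{\dot H^s}^2$ must be proved on the Fourier side; this last identity is the one delicate point of that approach, and it is in any case most transparently recovered from the dilation computation above.
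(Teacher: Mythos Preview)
The paper does not supply its own proof of this lemma; it is simply quoted from \cite{Zhu2016}, so there is nothing in the present paper to compare against. Your argument is the standard one and is correct: \eqref{P1} is just the equation tested against $\overline Q$, and \eqref{P2} is the Pohozaev identity obtained by differentiating the action along the dilation curve $\lambda\mapsto Q(\lambda\cdot)$ at $\lambda=1$; your scaling exponents and the resulting formula for $g'(1)$ are all right. You have also correctly isolated the only genuine issue, namely the justification of $g'(1)=\langle S'(Q),x\cdot\nabla Q\rangle=0$, which hinges on enough regularity/decay of $Q$ (or, equivalently, on a cutoff-and-limit argument); this is exactly how such identities are made rigorous in the fractional setting, and the references you cite suffice for that.
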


\begin{remark}\label{reQ}
  Let $Q$ be the ground-state solution of (\ref{eqQ}). In terms of the Pohozaev identities (\ref{P1}) and (\ref{P2}), we can obtain the following properties.
\begin{itemize}
\item [(i)]  \[\int\int\frac{|Q(x)|^2|Q(y)|^2}{|x-y|^{\gamma}}dxdy=\frac{4s}{\gamma}\|Q\|_{\dot{H}^s}^2=\frac{4s}{4s-\gamma}\|Q\|_2^2.\]
\item [(ii)] \[E[Q]=\frac 12 \int \overline{Q}(-\triangle)^s  Q dx
 -\frac {1}{4} \int\int\frac{|Q(x)|^2|Q(y)|^2}{|x-y|^{\gamma}}dxdy=\frac{\gamma-2s}{2(4s-\gamma)}\|Q\|_2^2.\]
\item [(iii)] \[E[Q]M[Q]^{\frac{s-s_c}{s_c}}=\frac{\gamma-2s}{2(4s-\gamma)}\|Q\|_2^{\frac{2s}{s_c}}.\]
\item [(iv)] \[\|Q\|_{\dot{H}^s}^{2}M[Q]^{\frac{s-s_c}{s_c}}=\frac{\gamma}{4s-\gamma}\|Q\|_2^{\frac{2s}{s_c}}.\]
\end{itemize}
The general fractional Laplacian was first proposed by Caffarelli and Silvestre in \cite{Caffarelli-Silvestre2007}, and many researchers have studied the related time-independent Schr\"{o}dinger  equations with the fractional Laplacian (see\cite{Chen-Li-Li2017,Felmer-Quaas-Tan2012,Frank-Lenzmann2013,g-h17na,g-h17jde,Liu-Ma2016}).

\end{remark}

For the Cauchy problem  (\ref{eq1})-(\ref{1.2}), we can construct the following
 two invariant evolution
flows by the sharp G-N inequality (\ref{G-N}) and the conservation laws. Let $u\in H^{s}\setminus\{0\}$, and define
\[K_1=\{\|  u \|_{\dot{H}^s}^{2}M[u]^{\frac{s-s_c}{s_c}}< \|Q\|_{\dot{H}^{s}}^{2}M[Q]^{\frac{s-s_c}{s_c}},\  E[u]M[u]^{\frac{s-s_c}{s_c}}<  E[Q]M[Q]^{\frac{s-s_c}{s_c}}\} \]and
\[K_2=\{\|  u \|_{\dot{H}^s}^{2}M[u]^{\frac{s-s_c}{s_c}}>\|Q\|_{\dot{H}^{s}}^{2}M[Q]^{\frac{s-s_c}{s_c}},\  E[u]M[u]^{\frac{s-s_c}{s_c}}<  E[Q]M[Q]^{\frac{s-s_c}{s_c}}\}.\]
\begin{proposition} \label{invariant set} Let $N\geq 2$ and $Q$ be the ground-state solution of
(\ref{eqQ}).  If $0<s<1$ and  $2s<\gamma<\min\{N,4s\}$, then
$K_1$ and $K_2$ are invariant manifolds of (\ref{eq1}).
\end{proposition}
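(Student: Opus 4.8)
The plan is to reduce the statement to a one-variable analysis of the energy, with the scaling exponent $s_c=\frac{\gamma-2s}{2}$ doing the bookkeeping. For a solution $u$ on its maximal interval $I$, introduce the conserved quantity $\mathcal{ME}[u]:=M[u]^{\frac{s-s_c}{s_c}}E[u]$ and the time-dependent quantity $\mathcal{H}(t):=M[u(t)]^{\frac{s-s_c}{s_c}}\|u(t)\|_{\dot H^s}^{2}$. First I would record the algebraic identity
$$M[u]^{\frac{s-s_c}{s_c}}\,\|u\|_2^{\frac{4s-\gamma}{s}}\,\|u\|_{\dot H^s}^{\frac{\gamma}{s}}=\big(\mathcal{H}(t)\big)^{\frac{\gamma}{2s}},$$
which comes out by matching the powers of $\|u\|_2$ and $\|u\|_{\dot H^s}$ on both sides once $\gamma=2s+2s_c$ is used (and here $\frac{\gamma}{2s}=1+\frac{s_c}{s}>1$). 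Multiplying the energy \eqref{E} by $M[u]^{\frac{s-s_c}{s_c}}$ and bounding the convolution term by the sharp Gagliardo--Nirenberg inequality \eqref{G-N} then gives, for all $t\in I$,
$$\mathcal{ME}[u_0]=\mathcal{ME}[u(t)]\ \ge\ g\big(\mathcal{H}(t)\big),\qquad g(y):=\tfrac12 y-\tfrac{C_{GN}}{4}\,y^{\frac{\gamma}{2s}}.$$

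Next I would analyze $g$ on $[0,\infty)$. Since $\gamma>2s$, the exponent $\frac{\gamma}{2s}>1$, so $g(0)=0$, $g$ is increasing on $[0,y_*]$ and strictly decreasing on $[y_*,\infty)$, where $y_*$ is the unique positive root of $g'$. Using the explicit value of $C_{GN}$ from \eqref{CGN} together with the identities for $Q$ in Remark \ref{reQ}(iii)--(iv), a short computation identifies the maximum: $y_*=M[Q]^{\frac{s-s_c}{s_c}}\|Q\|_{\dot H^s}^2$ and $g(y_*)=M[Q]^{\frac{s-s_c}{s_c}}E[Q]$; that is, $\max_{y\ge0}g(y)=\mathcal{ME}[Q]$, attained exactly at $y=\mathcal{H}_Q:=M[Q]^{\frac{s-s_c}{s_c}}\|Q\|_{\dot H^s}^2$.

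With this in hand, invariance is a continuity/no-crossing argument. Conservation of mass and energy gives $\mathcal{ME}[u(t)]=\mathcal{ME}[u_0]<\mathcal{ME}[Q]$ for all $t\in I$, so the energy condition in the definitions of $K_1,K_2$ propagates automatically. For the $\dot H^s$ condition, observe that $t\mapsto\mathcal{H}(t)$ is continuous on $I$ because $u\in C(I;H^s)$. If $u_0\in K_1$, then $\mathcal{H}(0)<\mathcal{H}_Q$; were $\mathcal{H}(t^\ast)=\mathcal{H}_Q$ at some first time $t^\ast\in I$, we would get $g(\mathcal{H}(t^\ast))=\mathcal{ME}[Q]$, contradicting $g(\mathcal{H}(t^\ast))\le\mathcal{ME}[u(t^\ast)]=\mathcal{ME}[u_0]<\mathcal{ME}[Q]$. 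Hence $\mathcal{H}(t)<\mathcal{H}_Q$ on all of $I$, i.e.\ $u(t)\in K_1$ throughout $I$. The identical argument with the reversed strict inequality shows $u_0\in K_2$ forces $\mathcal{H}(t)>\mathcal{H}_Q$ on $I$, so $u(t)\in K_2$ throughout; thus $K_1$ and $K_2$ are both invariant.

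The only real obstacle is computational: checking that the powers of $M[u]$, $\|u\|_2$ and $\|u\|_{\dot H^s}$ collapse into a single power of $\mathcal{H}(t)$ (this is exactly where the supercritical exponent $s_c=\frac{\gamma-2s}{2}$ is forced), and the matching computation identifying $(y_*,g(y_*))$ with $(\mathcal{H}_Q,\mathcal{ME}[Q])$ through the sharp constant $C_{GN}$. Everything downstream — continuity of the $H^s$ flow and the unimodal shape of $g$ — is soft.
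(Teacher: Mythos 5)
Your proof is correct and follows essentially the same route as the paper: multiply the conserved energy by $M[u]^{\frac{s-s_c}{s_c}}$, apply the sharp Gagliardo--Nirenberg inequality to dominate $\mathcal{ME}[u]$ from below by a unimodal one-variable function of the gradient quantity, identify the unique interior maximum with $(\mathcal{H}_Q,\mathcal{ME}[Q])$ via the Pohozaev identities in Remark~\ref{reQ}, and close with a continuity/no-crossing argument. The only cosmetic difference is your choice of variable: you work with $y=\mathcal{H}(t)$ and $g(y)$, whereas the paper uses $y=\|u\|_2^{\frac{s-s_c}{s_c}}\|D^s u\|_2$ and $f(y)$, related by $g(y)=f(\sqrt{y})$, so the two analyses are equivalent.
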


\begin{proof}
Denote $$V(u):= \int\int\frac{|u(t,x)|^2|u(t,y)|^2}{|x-y|^\gamma}dxdy.$$
Multiplying the definition of energy by $M[u]^{\frac{s-s_c}{s_c}}$ and using \eqref{G-N},
we have
\begin{align*}
M[u]^{\frac{s-s_c}{s_c}}E[u]=&\frac12 \|u(t)\|^{\frac{2(s-s_c)}{s_c}}_{2}\|D^s u(t)\|^2_{2}
-\frac1{4}V(u)\|u\|_2^{\frac{2(s-s_c)}{s_c}}\\
\geq& \frac12 (\|u(t)\|^{\frac{s-s_c}{s_c}}_{2}\|D^s u(t)\|_{2})^2
-\frac{C_{GN}}{4}(\|u(t)\|^{\frac{s-s_c}{s_c}}_{2}\|D^s u(t)\|_{2})^{\frac{\gamma}{s}}.
\end{align*}
Define $f(y)=\frac12y^2-\frac1{4}C_{GN}y^{\frac{\gamma}{s}}$. Then,
$f'(y)=y\left(1-C_{GN}\frac{\gamma}{4s}y^{\frac{\gamma-2s}{s}}\right)$, and
thus, $f'(y)=0$ when $y_0=0$ and
$y_1=\|Q\|^{\frac{s-s_c}{s_c}}_{2}\|D^s Q\|_{2}$. The graph of
$f$ has a local minimum at $y_0$ and a local maximum at $y_1$. Remark \ref{reQ} implies that
$f_{max}=f(y_1)=M[Q]^{\frac{s-s_c}{s_c}}E[Q]$.
This combined with energy conservation gives
\begin{align}\label{1}
f(\|u(t)\|^{\frac{s-s_c}{s_c}}_{2}\|D^s u(t)\|_{2})\leq M[u(t)]^{\frac{s-s_c}{s_c}}E[u(t)]=M[u_0]^{\frac{s-s_c}{s_c}}E[u_0]
<f(y_1).
\end{align}
Next, we shall prove Proposition \ref{invariant set} in the following two cases:

{\bf Case I:} If the initial data $u_0\in K_1$, i.e., $\|u_{0}\|^{\frac{s-s_c}{s_c}}_{2}\|D^s
u_{0}\|_{2}<y_1$, then by  \eqref{1} and the continuity of $\|D^s
u(t)\|_2$ in $t$, we have
for all time ~$t\in \mathbb{R},$~
\begin{equation}\label{2.3'}
\|  u(t) \|_{\dot{H}^s}^{2}M[u(t)]^{\frac{s-s_c}{s_c}}< \|Q\|_{\dot{H}^{s}}^{2}M[Q]^{\frac{s-s_c}{s_c}}.
\end{equation}
Indeed, if (\ref{2.3'}) is not true, then
there exists $t_1\in I$ such that $\|u(t_1)\|^{\frac{s-s_c}{s_c}}_{2}\|D^s
u(t_1)\|_{2}\geq y_1$.
Because the corresponding solution $u(t,x)\in C(I;H^s)$ is
continuous with respect to $t$, there exists $0<t_0\leq t_1$ such
that $\|u(t_0)\|^{\frac{s-s_c}{s_c}}_{2}\|D^s
u(t_0)\|_{2}= y_1$. Thus, injecting the conservation of energy
$E[u(t_0)]=E[u_0]$ and $\|u(t_0)\|^{\frac{s-s_c}{s_c}}_{2}\|D^s
u(t_0)\|_{2}= y_1$ into  (\ref{1}), we deduce that
\[f(y_1)>M[u_0]^{\frac{s-s_c}{s_c}}E[u_0]=M[u(t_0)]^{\frac{s-s_c}{s_c}}E[u(t_0)]\geq f(\|u(t_0)\|^{\frac{s-s_c}{s_c}}_{2}\|D^s
u(t_0)\|_{2})=f(y_1).\]
This is a contradiction. Hence, (\ref{2.3'}) is true, which implies that $K_1$ is an invariant set.

{\bf Case II:} If the initial data $u_0\in K_2$, i.e., $\|u_{0}\|^{\frac{s-s_c}{s_c}}_{2}\|D^s u_{0}\|_{2}>y_1$, then by  \eqref{1}
and the continuity of $\|D^s u(t)\|_2$ in $t$, we have  for all time $t\in I$ that
\begin{equation}\label{2.5'}
\|  u(t) \|_{\dot{H}^s}^{2}M[u(t)]^{\frac{s-s_c}{s_c}}>\|Q\|_{\dot{H}^{s}}^{2}M[Q]^{\frac{s-s_c}{s_c}},\end{equation}
which implies that $K_2$ is an invariant set. The proof is similar to {\bf Case I}.
\end{proof}

\begin{remark}\label{remdelta}
From the argument above, we
  can refine this analysis to obtain the following.  If the
condition $\|  u_0 \|_{\dot{H}^s}^{2}M[u_0]^{\frac{s-s_c}{s_c}}< \|Q\|_{\dot{H}^{s}}^{2}M[Q]^{\frac{s-s_c}{s_c}}$ holds, then there exists $\delta>0$ such that
$M[u]^{\frac{s-s_c}{s_c}}E[u]<(1-\delta)M[Q]^{\frac{s-s_c}{s_c}}E[Q]$,
and thus, there exists $\delta_0=\delta_0(\delta)$ such that
$\|u(t)\|^{\frac{s-s_c}{s_c}}_{2}\|D^su(t)\|_{2}<
(1-\delta_0)\|Q\|^{\frac{s-s_c}{s_c}}_{2}\|D^s Q\|_{2}$, where $u=u(t)$ is the corresponding solution to Eq. (\ref{eq1}).
\end{remark}

\begin{theorem}\label{dichotomy}
(Global versus blow-up dichotomy) Let $u_{0}\in H^{s}$, and let  $I=(T_{-},T_{+})$  be the
maximal time interval of existence of ~$u(t)$~ solving ~\eqref{eq1}.
\begin{itemize}
\item [(i)] If  $u_0\in K_1$,
then ~$I=(-\infty,+\infty)$,~ i.e., the solution exists globally in
time.
\item [(i)] If  $u_0\in K_2\bigcap H^{s_0}$ is radial, $ |x| u_0\in L^2$  and $x\cdot \nabla u_0\in L^2$,
where $s_0=\max\{2s, \frac{\gamma+1}{2}\}$, then the corresponding solution $u(t,x)$ of (\ref{eq1}) must blow up in  a finite time $0<T<+\infty$.
\end{itemize}
\end{theorem}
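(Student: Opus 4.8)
The plan is to treat the two parts separately, using the invariant-flow structure from Proposition \ref{invariant set} together with the conservation laws and (for part (ii)) a localized virial identity. For part (i), suppose $u_0\in K_1$. By Proposition \ref{invariant set}, the set $K_1$ is invariant, so for every $t$ in the maximal interval $I$ we have $\|u(t)\|_{\dot H^s}^2 M[u(t)]^{\frac{s-s_c}{s_c}}<\|Q\|_{\dot H^s}^2 M[Q]^{\frac{s-s_c}{s_c}}$. Since $M[u(t)]=M[u_0]$ is conserved and finite, this yields a uniform-in-time bound on $\|u(t)\|_{\dot H^s}$, hence (combined with mass conservation) a uniform bound on $\|u(t)\|_{H^s}$. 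By the blow-up alternative in the local well-posedness theory (the $H^s$ norm must diverge at a finite endpoint), the solution cannot blow up in finite time, so $I=(-\infty,+\infty)$ and the solution is global. This part is essentially immediate from the machinery already set up.

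For part (ii), suppose $u_0\in K_2\cap H^{s_0}$ is radial with $|x|u_0\in L^2$ and $x\cdot\nabla u_0\in L^2$. First I would record that, by Proposition \ref{invariant set} (Case II) together with Remark \ref{remdelta}, there is $\delta_0>0$ so that $\|u(t)\|_{\dot H^s}^2 M[u(t)]^{\frac{s-s_c}{s_c}}\ge(1+\delta_0)\|Q\|_{\dot H^s}^2 M[Q]^{\frac{s-s_c}{s_c}}$ for all $t\in I$; that is, the rescaled $\dot H^s$-norm stays bounded strictly away from the ground-state threshold from above. Next I would introduce the virial/Morawetz quantity $V(t)=\int |x|^2 |u(t,x)|^2\,dx$ (or a smooth localized version $V_R(t)=\int \varphi_R(x)|u(t,x)|^2\,dx$ to handle the nonlocal fractional operator), and compute $V'(t)$ and $V''(t)$. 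The key algebraic step is to show that $V''(t)$ is controlled above by a fixed negative multiple of the quantity $\gamma\int\int\frac{|u(x)|^2|u(y)|^2}{|x-y|^\gamma}-2s\,\|u\|_{\dot H^s}^2$ (this is the combination appearing in the scaling/Pohozaev identities), and then to use the energy identity $E[u]=\frac12\|u\|_{\dot H^s}^2-\frac14 V(u)$ together with the threshold gap from $K_2$ to conclude that this expression is bounded above by a strictly negative constant $-c<0$ depending only on $\delta_0$, $M[u_0]$ and $E[u_0]$. Concretely, one uses $M[u_0]^{\frac{s-s_c}{s_c}}E[u_0]<M[Q]^{\frac{s-s_c}{s_c}}E[Q]$ together with the sharp Gagliardo-Nirenberg inequality \eqref{G-N} and the lower bound on $\|u(t)\|_{\dot H^s}$ to trap the virial functional below a negative constant, exactly as in the convexity/concavity argument for the classical NLS. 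Then $V''(t)\le -c<0$ forces $V(t)$ to become negative in finite time, contradicting $V(t)\ge 0$, so $T_+<\infty$.

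The main obstacle is the virial computation itself: because $(-\triangle)^s$ is nonlocal, the classical Virial identity does not literally apply (as the authors note in the introduction), so one must either work with the generator $x\cdot\nabla$ directly — which is why the extra hypotheses $u_0\in H^{s_0}$ with $s_0=\max\{2s,\frac{\gamma+1}{2}\}$, $|x|u_0\in L^2$ and $x\cdot\nabla u_0\in L^2$ are imposed, to guarantee enough regularity and decay for the relevant integrations by parts and commutator estimates to be justified — or use a truncated weight $\varphi_R$ and carefully estimate the error terms (the commutator $[(-\triangle)^s,\varphi_R]$ and the tail of the nonlocal nonlinearity) as $R\to\infty$. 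Establishing that $\frac{d^2}{dt^2}\int|x|^2|u|^2\,dx$ equals (up to controllable terms) $8s\|u\|_{\dot H^s}^2-2\gamma\int\int\frac{|u(x)|^2|u(y)|^2}{|x-y|^\gamma}\,dx\,dy$ — the natural fractional analogue — and showing the remainder is harmless under the stated hypotheses is the technical heart of the argument; once that identity is in hand, the negativity of the right-hand side and the resulting finite-time blow-up follow by the same concavity argument as in the local case.
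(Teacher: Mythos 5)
Your part (i) is essentially the paper's argument and is correct: invariance of $K_1$ gives a uniform-in-time bound on $\|u(t)\|_{\dot H^s}$, which combined with mass conservation and the blow-up alternative yields global existence.

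Your part (ii), however, has a genuine gap. You propose to run a concavity argument on the classical variance $V(t)=\int|x|^2|u(t,x)|^2\,dx$, asserting that $V''(t)$ equals, up to ``controllable terms,'' $8s\|u\|_{\dot H^s}^2-2\gamma\iint\frac{|u(x)|^2|u(y)|^2}{|x-y|^\gamma}\,dx\,dy$. But for $s<1$ this is precisely the identity that is not available, and the introduction of the paper says so explicitly: ``the classical Virial identity argument fails.'' The trouble is already at first order: for $s<1$, $\frac{d}{dt}\int|x|^2|u|^2\,dx$ is not proportional to the Morawetz quantity $2\Im\int\bar u\,x\cdot\nabla u\,dx$ (that equivalence requires integrating the commutator of $|x|^2$ with $-\Delta$, which only works cleanly when $s=1$), so there is no second-derivative formula of the form you posit, and the ``remainder'' is not a small error — the structure you would need simply isn't there. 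Note also that even if one used the Morawetz quantity $\mathcal M_R(t)$, whose time derivative does have the form $8s\|D^su\|_2^2-2\gamma V(u)+\text{error}$ (this is what Section 5 does), that quantity is sign-indefinite, and in the blow-up regime one has no a priori control of $\|u(t)\|_{H^s}$ with which to bound $|\mathcal M_R(t)|$, so the concavity-style contradiction does not close.

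What the paper actually does is different in a decisive way: it works with the nonnegative quantity $J(t):=\int\bar u\,x(-\Delta)^{1-s}x\,u\,dx=\|(-\Delta)^{\frac{1-s}{2}}(xu)\|_2^2\ge0$ (which reduces to $\int|x|^2|u|^2\,dx$ only when $s=1$), and it invokes from \cite{ChoHwangKwonLee2012,Zhu2016} an \emph{inequality}, not an identity,
\begin{equation*}
J(t)\le \int_0^t\!\!\int_0^t\bigl(2\gamma E[u(\tau)]-(\gamma-2s)\|u(\tau)\|_{\dot H^s}^2\bigr)\,d\tau\,dt + Ct + C,
\end{equation*}
valid under exactly the extra hypotheses $u_0\in H^{s_0}$, $|x|u_0\in L^2$, $x\cdot\nabla u_0\in L^2$. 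With the $K_2$ invariance providing $\|u(t)\|_{\dot H^s}^2>A^2$ and $E[u_0]<\frac{\gamma-2s}{2\gamma}A^2$, the integrand is bounded above by a strict negative constant, so $J(t)\le -C_0t^2+Ct+C$ for all $t\in I$. Since $J(t)\ge0$, the interval $I$ must be bounded. The key point you are missing is thus the choice of the correct nonnegative functional (with the nonlocal weight $(-\Delta)^{1-s}$) and the replacement of the virial \emph{identity} by a one-sided \emph{estimate}; without these, the argument cannot be completed along the lines you describe.
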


\begin{proof} (i) By the invariance of $K_1$, we see that (\ref{2.3'}) is true.
 In
particular, the $H^s$-norm of the solution $u$ is bounded, which
proves the global existence of the solution in this case.

(ii)  Denote
$A:=\left(\left(\frac{\gamma}{4s-\gamma}\right)^{s_c}\frac{\|Q\|_2^{2s}}{M[u_0]^{s-s_c}}
\right)^{\frac{1}{2s_c}}$. 
Using the invariance of $K_2$, we have $\|
 u(t) \|_{\dot{H}^s}^2>A^2$ for all $t\in I$.  It follows from  \cite{ChoHwangKwonLee2012,Zhu2016} that
$|x|u(t)\in L^2$ and $x \cdot\nabla u(t)\in L^2$,
and
 for all $t\in I$ (the maximal time interval), $\int \overline{u} x (-\Delta)^{1-s}x udx$ is
non-negative and   \be\label{4.18} \int
\overline{u} x (-\Delta)^{1-s}x udx\leq
\int_0^t\int_0^t\left(2\gamma
E[u(\tau)]-(\gamma-2s)\|u(\tau)\|_{\dot{H}^s}^2\right)d\tau dt+Ct+C.\ee  Applying the fact that for all $t\in I$,
$E[u(t)]=E[u_0]<\frac{\gamma-2s}{2\gamma}A^2$  and $\|u(t) \|_{\dot{H}^s}^2>A^2$
 to (\ref{4.18}), we deduce  that  for all $t\in I$
\[\int
\overline{u} x (-\Delta)^{1-s}x udx<
\int_0^t\int_0^t\left(2\gamma\frac{\gamma-2s}{2\gamma}A^2
 -(\gamma-2s)A^2\right)d\tau dt+Ct+C.\]
Hence, there exists a constant $C_0>0$ such that for all $t\in I$\[\int \overline{u} x
(-\Delta)^{1-s}x udx\leq -C_0t^2+Ct+C.\]
For sufficiently lage $|t|$, the left-hand side is negative, while $\int \overline{u} x (-\Delta)^{1-s}x udx$ is
non-negative, which means that both $T_{-}$ and $T_{+}$ are finite.  Specifically,
the solution $u(t,x)$ of the Cauchy problem (\ref{eq1})-(\ref{1.2})  blows  up in  finite time.

\end{proof}

\begin{lemma}\label{lemlowerbound}
Let $u_0\in K_1$. Furthermore,
take $\delta>0$ such that
$M[u_0]^{\frac{s-s_c}{s_c}}E[u_0]<(1-\delta)M[Q]^{\frac{s-s_c}{s_c}}E[Q]$.
If $u$ is a solution to problem \eqref{eq1} with initial data $u_0$,
then there exists $C_\delta>0$ such that for all $t\in\mathbb R$,
\begin{align}\label{4.7}
\|D^s u\|_2^2-\frac{\gamma}{4s}V(u)\geq
C_\delta\|D^s u\|_2^2.
\end{align}
\end{lemma}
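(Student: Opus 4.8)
The plan is to renormalize \eqref{4.7} by the conserved mass and reduce it to the elementary analysis of the function $f$ appearing in the proof of Proposition~\ref{invariant set}. Set $M:=M[u_0]=\|u(t)\|_2^2$ and introduce the scaling-invariant quantities
$$y(t):=\|u(t)\|_2^{\frac{s-s_c}{s_c}}\|D^s u(t)\|_2,\qquad y_1:=\|Q\|_2^{\frac{s-s_c}{s_c}}\|D^s Q\|_2,$$
noting that $\frac{s-s_c}{s_c}=\frac{4s-\gamma}{\gamma-2s}>0$ since $2s<\gamma<4s$. Multiplying the left side of \eqref{4.7} by $M^{\frac{s-s_c}{s_c}}>0$, using $M=\|u\|_2^2$ and $s_c=\frac{\gamma-2s}{2}$ to verify the exponent identities $M^{\frac{s-s_c}{s_c}}\|D^s u\|_2^2=y(t)^2$ and $M^{\frac{s-s_c}{s_c}}\|u\|_2^{\frac{4s-\gamma}{s}}\|u\|_{\dot H^s}^{\gamma/s}=y(t)^{\gamma/s}$, and then applying the sharp Gagliardo--Nirenberg inequality \eqref{G-N} to $V(u)$, I would obtain
$$M^{\frac{s-s_c}{s_c}}\Big(\|D^s u\|_2^2-\tfrac{\gamma}{4s}V(u)\Big)\ \ge\ y(t)^2-\tfrac{\gamma}{4s}C_{GN}\,y(t)^{\gamma/s}.$$

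The next step is to recognize the right-hand side through the explicit value of $C_{GN}$. From \eqref{CGN} together with the identity $y_1^{(\gamma-2s)/s}=\|Q\|_2^{\frac{4s-\gamma}{s}}\|Q\|_{\dot H^s}^{\frac{\gamma-2s}{s}}$ one gets $\frac{\gamma}{4s}C_{GN}\,y_1^{(\gamma-2s)/s}=1$, so the bound above becomes
$$M^{\frac{s-s_c}{s_c}}\Big(\|D^s u\|_2^2-\tfrac{\gamma}{4s}V(u)\Big)\ \ge\ y(t)^2\Big(1-\big(y(t)/y_1\big)^{(\gamma-2s)/s}\Big).$$
Since $u_0\in K_1$, the refinement recorded in Remark~\ref{remdelta}, applied with the $\delta$ of the present statement, supplies $\delta_0=\delta_0(\delta)>0$ such that $y(t)\le(1-\delta_0)y_1$ for all $t\in\mathbb R$; hence $\big(y(t)/y_1\big)^{(\gamma-2s)/s}\le(1-\delta_0)^{(\gamma-2s)/s}<1$. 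Substituting this and using $y(t)^2=M^{\frac{s-s_c}{s_c}}\|D^s u\|_2^2$ once more, then dividing by $M^{\frac{s-s_c}{s_c}}>0$, yields \eqref{4.7} with
$$C_\delta:=1-(1-\delta_0)^{(\gamma-2s)/s}>0.$$

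Since every object here is explicit, I do not expect a genuine obstacle; the only points that require care are the two exponent identities above (both forced by $s_c=\frac{\gamma-2s}{2}$ and mass conservation) and the appeal to Remark~\ref{remdelta} to convert the qualitative hypothesis $u_0\in K_1$ into the quantitative gap $y(t)\le(1-\delta_0)y_1$. One should also observe that $u(t)\neq0$ for every $t$ by conservation of mass, so that $M>0$ and all the divisions are legitimate.
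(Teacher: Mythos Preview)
Your proof is correct and follows essentially the same approach as the paper's: both multiply through by the mass factor, apply the sharp Gagliardo--Nirenberg inequality \eqref{G-N} with the explicit constant \eqref{CGN}, invoke Remark~\ref{remdelta} for the quantitative gap $y(t)\le(1-\delta_0)y_1$, and reduce to the elementary observation that $y^2-y^{\gamma/s}\ge C_\delta y^2$ on $[0,1-\delta_0]$. The paper packages this via the auxiliary functions $h(t)$ and $g(y)=y^2-y^{\gamma/s}$ and leaves $C_\delta$ implicit, whereas you compute $C_\delta=1-(1-\delta_0)^{(\gamma-2s)/s}$ explicitly, but the argument is the same.
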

\begin{proof}
By Remark \ref{remdelta}, there exists
$\delta_0=\delta_0(\delta)>0$ such that for all $t\in\mathbb R$,

\begin{align}\label{low}
\|u(t)\|^{\frac{s-s_c}{s_c}}_{2}\|D^s u(t)\|_{2}<
(1-\delta_0)\|Q\|^{\frac{s-s_c}{s_c}}_{2}\|D^s Q\|_{2}.
\end{align} Let
$$h(t)=\frac1{\|Q\|^{\frac{2(s-s_c)}{s_c}}_{2}\|D^s Q\|^2_{2}}(
\|u(t)\|^{\frac{2(s-s_c)}{s_c}}_{2}\|D^s
u(t)\|^2_{2}-\frac{\gamma}{4s}V(u)\|u(t)\|^{\frac{2(s-s_c)}{s_c}}_{2})$$
and  $g(y)=y^2-y^{\frac{\gamma}s}$. By the Gagliardo-Nirenberg
estimate \eqref{G-N} with the sharp constant $C_{GN}$ \eqref{CGN}, we can obtain
$h(t)\geq g\left(\frac{\|u(t)\|^{\frac{s-s_c}{s_c}}_{2}\|D^s u(t)\|_{2}}{\|Q\|^{\frac{s-s_c}{s_c}}_{2}\|D^s Q\|_{2}}\right)$.
By \eqref{low}, we restrict our attention to $0\leq y\leq1-\delta_0$.
The elementary argument gives a constant $C_\delta$ such that $g(y)\geq
C_\delta y^2$ if $0\leq y\leq1-\delta_0$. This indeed implies \eqref{4.7}.
\end{proof}

\begin{lemma}\label{lemcompare}
(Comparability of  gradient and energy)
Let $u_0\in K_1$. Then,
$$\frac{\gamma-2s}{2\gamma}\|D^s u(t)\|_{2}^2\leq E[u(t)]\leq\frac{1}{2}\|D^su(t)\|_{2}^2.$$
\end{lemma}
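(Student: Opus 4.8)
The plan is to read both inequalities directly off the energy identity $E[u(t)]=\frac12\|D^su(t)\|_2^2-\frac14V(u(t))$, where $V(u)=\int\int\frac{|u(x)|^2|u(y)|^2}{|x-y|^\gamma}\,dxdy$, combining the positivity of $V$ with the sub-threshold control of the gradient.

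For the upper bound, I observe that $V(u(t))\geq0$, since the integrand $\frac{|u(t,x)|^2|u(t,y)|^2}{|x-y|^\gamma}$ is nonnegative. Hence $E[u(t)]=\frac12\|D^su(t)\|_2^2-\frac14V(u(t))\leq\frac12\|D^su(t)\|_2^2$, which is the right-hand inequality, and nothing further is needed.

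For the lower bound, the key point is the pointwise-in-time estimate $\|D^su(t)\|_2^2\geq\frac{\gamma}{4s}V(u(t))$, equivalently $\frac14V(u(t))\leq\frac{s}{\gamma}\|D^su(t)\|_2^2$. This is exactly the content of Lemma \ref{lemlowerbound}: since $u_0\in K_1$ we have $M[u_0]^{\frac{s-s_c}{s_c}}E[u_0]<M[Q]^{\frac{s-s_c}{s_c}}E[Q]$, so one may pick $\delta>0$ with $M[u_0]^{\frac{s-s_c}{s_c}}E[u_0]<(1-\delta)M[Q]^{\frac{s-s_c}{s_c}}E[Q]$, and Lemma \ref{lemlowerbound} then provides $C_\delta>0$ with $\|D^su(t)\|_2^2-\frac{\gamma}{4s}V(u(t))\geq C_\delta\|D^su(t)\|_2^2\geq0$ for all $t\in\mathbb R$. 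Substituting $\frac14V(u(t))\leq\frac{s}{\gamma}\|D^su(t)\|_2^2$ into the energy identity gives $E[u(t)]\geq\bigl(\frac12-\frac{s}{\gamma}\bigr)\|D^su(t)\|_2^2=\frac{\gamma-2s}{2\gamma}\|D^su(t)\|_2^2$, as claimed.

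Alternatively, one can bypass the constant $\delta$ and argue directly from the sharp Gagliardo–Nirenberg inequality: by the invariance of $K_1$ (Proposition \ref{invariant set}) the bound $\|u(t)\|_{\dot H^s}^2M[u(t)]^{\frac{s-s_c}{s_c}}<\|Q\|_{\dot H^s}^2M[Q]^{\frac{s-s_c}{s_c}}$ holds for every $t$; raising it to the power $\frac{\gamma-2s}{2s}$ and using $\frac{s-s_c}{s_c}=\frac{4s-\gamma}{\gamma-2s}$ yields $\|D^su(t)\|_2^{\frac{\gamma-2s}{s}}\|u(t)\|_2^{\frac{4s-\gamma}{s}}<\|D^sQ\|_2^{\frac{\gamma-2s}{s}}\|Q\|_2^{\frac{4s-\gamma}{s}}$, and feeding this into \eqref{G-N}–\eqref{CGN} gives $\frac{\gamma}{4s}V(u(t))\leq\|D^su(t)\|_2^2$, which again closes the argument. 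There is no real obstacle here: both steps are elementary, and the only place where the hypothesis $u_0\in K_1$ is genuinely used is to guarantee the coercivity $\|D^su(t)\|_2^2\geq\frac{\gamma}{4s}V(u(t))$ for all $t$, which is exactly what fails outside the sub-threshold regime.
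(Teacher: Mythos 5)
Your second ("alternative") argument is exactly the paper's proof: use the invariance of $K_1$ from Proposition~\ref{invariant set} to get $\|u(t)\|_{\dot H^s}^2M[u(t)]^{\frac{s-s_c}{s_c}}<\|Q\|_{\dot H^s}^2M[Q]^{\frac{s-s_c}{s_c}}$ for all $t$, feed this into the sharp Gagliardo--Nirenberg inequality \eqref{G-N}--\eqref{CGN} to obtain $\frac14V(u(t))\leq\frac{s}{\gamma}\|D^su(t)\|_2^2$, and subtract. Your first argument, routing through Lemma~\ref{lemlowerbound} and discarding the extra $C_\delta\|D^su\|_2^2$ term, is also valid and non-circular (Lemma~\ref{lemlowerbound} depends only on Remark~\ref{remdelta} and \eqref{G-N}, not on the present lemma), but it imports the strict coercivity constant $\delta_0(\delta)$ where only the bare inequality $\frac{\gamma}{4s}V(u)\leq\|D^su\|_2^2$ is needed; the direct G--N route is cleaner and is what the paper does.
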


\begin{proof}
The expression of $E[u(t)]$ gives the second inequality immediately. The first inequality is obtained from
$$\frac{1}{2}\|D^s u\|_{L^2}^2-\frac{1}{4}V(u)\geq\frac{1}{2}\|D^s  u\|_{L^2}^2
\left(1-\frac{2s}{\gamma}\left(\frac{\|D^s  u\|_{2}\|  u\|_{2}^{\frac{s-s_c}{s_c}}}{\|D^s  Q\|_{2}\|  Q\|_{2}^{\frac{s-s_c}{s_c}}}\right)
^{\frac{2s_c}s}\right)\geq\frac{\gamma-2s}{2\gamma}\|D^s u\|_{L^2}^2,$$
where we have used \eqref{G-N}, \eqref{CGN} and \eqref{2.3'}.

\end{proof}

To establish the scattering theory, we need the existence result of the wave operator $\Omega^+: \phi^+\mapsto v_0.$

\begin{proposition}\label{wave operator}
(Existence of wave operators) Suppose that $ \phi^+\in H^s$ and that
\begin{equation}\label{4.11}
\frac{1}{2}M[\phi^+]^{\frac{s-s_c}{s_c}}\|D^s\phi^+\|_2^2<M[Q]^{\frac{s-s_c}{s_c}}E[Q].
\end{equation}
Then, there exists $v_0\in H^s$ such that $v$ globally solves \eqref{eq1} with initial data $v_0$ satisfying
$$\|D^sv(t)\|_2\|v_0\|_2^{\frac{s-s_c}{s_c}}\leq\|D^sQ\|_2\|Q\|_2^{\frac{s-s_c}{s_c}},\ \
M[v]=\|\phi^+\|_2^2,\ \  E[v]=\frac{1}{2}\|D^s\phi^+\|_2^2,$$
and
$$\lim_{t\rightarrow+\infty}\|v(t)-U(t)\phi^+\|_{H^s}=0.$$
Moreover, if $\|U(t)\phi^+\|_{S(\Lambda_{s_c})}\leq\delta_{sd}$, where $\delta_{sd}$ is defined in Proposition \ref{sd}, then
$$\|v\|_{S(\Lambda_{s_c})}\leq 2\|U(t)\phi^+\|_{S(\Lambda_{s_c})},\ \
         \|D^{s_c}v\|_{S(\Lambda_0)}\leq 2c\|\phi^+\|_{\dot{H}^{s_c}}.$$
\end{proposition}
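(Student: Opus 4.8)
The plan is to first construct, by a fixed-point argument run backward from $t=+\infty$, a solution $v$ of \eqref{eq1} on a half-line $[T,+\infty)$ that scatters to $\phi^+$, and then to propagate it to all of $\mathbb R$ using the invariance of $K_1$ (Theorem \ref{dichotomy}(i)). Since $\gamma<4s$ we have $s>s_c>0$, so $\phi^+\in H^s\hookrightarrow\dot H^{s_c}$; by the radial Strichartz estimates (Proposition \ref{prostrichartz}) this gives $\|U(t)\phi^+\|_{S(\Lambda_{s_c})}<\infty$, hence $\|U(t)\phi^+\|_{S(\Lambda_{s_c};[T,\infty))}$ can be made arbitrarily small by taking $T$ large. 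For such $T$ I would solve
\[v(t)=U(t)\phi^+-i\int_t^{\infty}U(t-t^1)\Big(\tfrac1{|\cdot|^\gamma}\ast|v|^2\Big)v(t^1)\,dt^1\]
by contraction on a ball controlling $\|v\|_{S(\Lambda_{s_c})}$ and $\|D^{s_c}v\|_{S(\Lambda_0)}$ over $[T,\infty)$, the nonlinear estimates being exactly those in the proof of Proposition \ref{sd} (fractional Leibniz rule, Hardy--Littlewood--Sobolev and H\"older), with $\|U(t)\phi^+\|_{S(\Lambda_{s_c};[T,\infty))}$ playing the role of $\delta_{sd}$. Propagating $H^s$-regularity by applying $D^\alpha$ for $\alpha\in\{0,s\}$ to the Duhamel term as in the proof of Proposition \ref{h1scattering} then yields $\sup_{t\ge T}\|v(t)\|_{H^s}<\infty$ and, from the same estimates applied to $v(t)-U(t)\phi^+=-i\int_t^\infty U(t-t^1)(\tfrac1{|\cdot|^\gamma}\ast|v|^2)v\,dt^1$, the convergence $\|v(t)-U(t)\phi^+\|_{H^s}\to0$ as $t\to\infty$.

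Next I would read off the conserved quantities by passing to the limit $t\to\infty$. Since $U(t)$ is an isometry on $\dot H^\alpha$, the $H^s$-convergence gives $\|v(t)\|_2\to\|\phi^+\|_2$ and $\|D^sv(t)\|_2\to\|D^s\phi^+\|_2$, whence $M[v]=\|\phi^+\|_2^2$ by conservation of mass. By Hardy--Littlewood--Sobolev and the embedding $\dot H^{\gamma/4}\hookrightarrow L^{\frac{4N}{2N-\gamma}}$ (legitimate since $\gamma/4<s$), $V(w)\le C\|w\|_{\dot H^{\gamma/4}}^4$, so $V(v(t))-V(U(t)\phi^+)\to0$ along the $H^s$-convergence; combined with the dispersive decay of the radial free flow, which gives $V(U(t)\phi^+)\to0$, this yields $V(v(t))\to0$. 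By conservation of energy, $E[v]=\lim_{t\to\infty}\big(\tfrac12\|D^sv(t)\|_2^2-\tfrac14V(v(t))\big)=\tfrac12\|D^s\phi^+\|_2^2$. In particular $M[v]^{\frac{s-s_c}{s_c}}E[v]=\tfrac12M[\phi^+]^{\frac{s-s_c}{s_c}}\|D^s\phi^+\|_2^2<M[Q]^{\frac{s-s_c}{s_c}}E[Q]$ by hypothesis \eqref{4.11}, which is the energy half of the definition of $K_1$.

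For the gradient half I would use the function $f$ from the proof of Proposition \ref{invariant set}: with $y_1=\|Q\|^{\frac{s-s_c}{s_c}}_{2}\|D^sQ\|_{2}$ one has $f(y_1)=M[Q]^{\frac{s-s_c}{s_c}}E[Q]$, and Remark \ref{reQ} gives $f(y_1)=\frac{\gamma-2s}{2\gamma}y_1^2<\frac12y_1^2$. Since $\tfrac12\big(\lim_{t\to\infty}\|v(t)\|^{\frac{s-s_c}{s_c}}_{2}\|D^sv(t)\|_{2}\big)^2=M[v]^{\frac{s-s_c}{s_c}}E[v]<\tfrac12y_1^2$, it follows that $\lim_{t\to\infty}\|v(t)\|^{\frac{s-s_c}{s_c}}_{2}\|D^sv(t)\|_{2}<y_1$, so for $T$ large $v(T)\in K_1$. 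Then Theorem \ref{dichotomy}(i) extends the solution to all of $\mathbb R$ and keeps it in $K_1$; setting $v_0:=v(0)$, inequality \eqref{2.3'} gives $\|D^sv(t)\|_2\|v_0\|_2^{\frac{s-s_c}{s_c}}\le\|D^sQ\|_2\|Q\|_2^{\frac{s-s_c}{s_c}}$, while $M[v]$, $E[v]$ and the scattering limit are as established above. Finally, if $\|U(t)\phi^+\|_{S(\Lambda_{s_c})}\le\delta_{sd}$ the smallness already holds on all of $[0,\infty)$, so the fixed-point step can be run with $T=0$ on the ball $\{\,v:\ \|v\|_{S(\Lambda_{s_c})}\le2\|U(t)\phi^+\|_{S(\Lambda_{s_c})},\ \|D^{s_c}v\|_{S(\Lambda_0)}\le2c\|\phi^+\|_{\dot H^{s_c}}\,\}$, exactly as in Proposition \ref{sd}, giving the two remaining bounds.

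The main obstacle is the construction in the first paragraph together with the decay $V(v(t))\to0$ in the second: one must set up the backward-in-time fixed point in a space that simultaneously controls the scaling-critical norm $S(\Lambda_{s_c})$ (needed for the contraction in the $L^2$-supercritical regime) and the full $H^s$ norm, and one must carefully justify, via the dispersion of the radial free fractional flow, that $V(U(t)\phi^+)\to0$ so that $E[v]$ is pinned to $\tfrac12\|D^s\phi^+\|_2^2$. Once these are in place, the membership $v(T)\in K_1$ and the globalization are routine given the results of Sections 2 and 3.
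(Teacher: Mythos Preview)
Your proposal is correct and follows essentially the same approach as the paper: backward-in-time fixed point for the Duhamel formula on $[T,\infty)$ with $T$ large enough that $\|U(t)\phi^+\|_{S(\Lambda_{s_c};[T,\infty))}\le\delta_{sd}$, propagation of $H^s$ regularity, identification of $M[v]$ and $E[v]$ via the $H^s$ convergence and the decay $V(U(t)\phi^+)\to0$, verification that $v(T)\in K_1$, and globalization by Theorem \ref{dichotomy}(i). The only cosmetic difference is how you check the gradient half of $K_1$: the paper computes $\lim_{t\to\infty}\|v(t)\|_2^{\frac{2(s-s_c)}{s_c}}\|D^sv(t)\|_2^2\le 2M[Q]^{\frac{s-s_c}{s_c}}E[Q]=\tfrac{\gamma-2s}{\gamma}\|Q\|_2^{\frac{2(s-s_c)}{s_c}}\|D^sQ\|_2^2$ directly from Remark \ref{reQ}, whereas you route the same computation through the function $f$ and the inequality $f(y_1)<\tfrac12 y_1^2$; the two are equivalent.
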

\begin{proof} In this paper, we always use $v(t):=FNLS(t)v_0$
to denote the solution $v(t)$ of Eq.\eqref{eq1} with the initial data $v(0)=v_0$.
First, similar to the proof of the small data scattering theory Proposition \ref{sd}, we can solve the integral equation
\begin{equation}\label{4.12}
v(t)=U(t)\phi^+-i\int_t^\infty U(t-t^1)(\frac1{|\cdot|^\gamma}\ast|v|^2)v(t^1)dt^1
\end{equation}
 for $t\geq T$ with $T$ large. In fact, there exists $T>>1$ such that
$\|U(t)\phi^+\|_{S(\Lambda_{s_c};[T,\infty))}\leq \delta_{sd}.$
Now, from \eqref{4.12}, we again obtain by the Strichartz  estimate and the Hardy-Littlewood-Sobolev inequality that
\begin{align*}
&\|D^s v(t)\|_{S(\Lambda_0;[T,\infty))}
\leq c\|D^s\phi^{+}\|_{L^2}+c\|D^s[(\frac1{|\cdot|^\gamma}\ast|v|^2)v]\|_{L_{[T,+\infty)}^{q'}L^{r'}}\\
&\leq c\|D^s\phi^{+}\|_{L^2}+c\|D^s v\|_{L_{[T,+\infty)}^{q_{1}}L^{r_{1}}}\|v\|^2_{L_{[T,+\infty)}^{q_{2}}L^{r_{2}}}
+c\|v\|_{L_{[T,+\infty)}^{\gamma_{1}}L^{\rho_{1}}}\|\bar{v}D^{s}v\|_{L_{[T,+\infty)}^{\frac{\gamma}{2}}L^{\frac{\rho}{2}}}\\
&\leq c\|D^s\phi^{+}\|_{L^2}+c\|D^{s}v\|_{L_{[T,+\infty)}^{q_{1}}L^{r_{1}}}\|v\|^2_{L_{[T,+\infty)}^{q_{2}}L^{r_{2}}}
+c\|v\|_{L_{[T,+\infty)}^{\gamma_{1}}L^{\rho_{1}}}\|v\|_{L_{[T,+\infty)}^{\gamma_{2}}L^{\rho_{2}}}
\|D^{s}v\|_{L_{[T,+\infty)}^{\gamma_{3}}L^{\rho_{3}}}\\
&\leq c\|D^s\phi^{+}\|_{L^2}+c\|v\|_{S(\Lambda_{s_c};[T,+\infty))}^2\|D^s v(t)\|_{S(\Lambda_0;[T,+\infty))},
\end{align*}
where  $(q,r),(q_1,r_1)\in\Lambda_{0},$
$(q_2,r_2),(\gamma_1,\rho_1),(\gamma_2,\rho_2)\in\Lambda_{s_c}$, which indeed can be chosen as
$(q_2,r_2)=(\gamma_1,\rho_1)=(\gamma_2,\rho_2)=(q_c,r_c)\in\Lambda_{s_c}$, with $(q_c,r_c)$ defined by \eqref{qc}.
Similarly,
\begin{align*}
\| v(t)\|_{S(\Lambda_0;[T,+\infty))}
\leq c\|\phi^{+}\|_{L^2}+c\|v\|_{S(\Lambda_{s_c};[T,+\infty))}^2\|v(t)\|_{S(\Lambda_0;[T,+\infty))}.
\end{align*}
Following  Proposition \ref{sd}, we obtain for sufficiently large $T$
$$\|v\|_{S(\Lambda_0;[T,+\infty))}+\|D^s v\|_{S(\Lambda_0;[T,+\infty))}<2c \|\phi^{+}\|_{H^s}.$$
Using a similar approach with $t>T$,  we obtain
$$\|v-U(t)\phi^+\|_{S(\Lambda_0;[T,+\infty))}+\|D^s( v-e^{it\Delta}
\phi^+)\|_{S(\Lambda_0;[T,+\infty))} \rightarrow 0
\ \  {\rm as} ~T\rightarrow \infty,$$
which implies $v(t)-U(t)\phi^+\rightarrow0 $ in $H^s,$ and thus, $M[v]=\|\phi^{+}\|_{2}^2.$
Because $U(t)\phi^+\rightarrow0$ in $L^p$ for any $p\in(2,\frac{2N}{N-2s}]$ as $t\rightarrow+\infty$,  by the Hardy-Littlewood-Sobolev inequality,
 $V(U(t)\phi^+)\rightarrow0$. This together with the fact that
 $\|D^s U(t)\phi^+\|_2$ is conserved implies
 $$E[v]=\lim_{t\rightarrow+\infty}(\frac{1}{2}\|D^s U(t)\phi^+\|_2^2
 -\frac{1}{4}V(U(t)\phi^+))=\frac{1}{2}\|D^s \phi^+\|_2^2.$$
Considering \eqref{4.11}, we immediately obtain $M[v]^{\frac{s-s_c}{s_c}}E[v]<E[Q]M[Q]^{\frac{s-s_c}{s_c}}.$
  Note that
\begin{align*}
&\lim_{t\rightarrow+\infty}\|v(t)\|_2^{\frac{2(s-s_c)}{s_c}}\|D^s v(t)\|_2^2
=\lim_{t\rightarrow+\infty}\|U(t)\phi^+\|_2^{\frac{2(s-s_c)}{s_c}}\|D^s U(t)\phi^+\|_2^2\\
=&\|\phi^+\|_2^{\frac{2(s-s_c)}{s_c}}\|D^s \phi^+\|_2^2\leq2E[Q]M[Q]^{\frac{s-s_c}{s_c}}
=\frac{\gamma-2s}{\gamma}\|Q\|_2^{\frac{2(s-s_c)}{s_c}}\|D^s Q\|_2^2,
\end{align*}
where we used \eqref{4.11} and Remark \ref{reQ} in the last two steps.
Thus, due to Theorem \ref{dichotomy}, we can evolve $v(t)$ from $T$ back to time $0$
and complete our proof.
\end{proof}

\section{Critical solution and compactness}

From this section, we   begin to prove the scattering part of Theorem \ref{th1}. Let $u(t)$ be the solution of \eqref{eq1}
such that the assumption of Theorem \ref{th1} holds. Then, we know from Theorem \ref{dichotomy} that
 $u(t)$ is globally well-posed. Thus,  combined with Proposition \ref{h1scattering}, our goal is to show that
\begin{equation}\label{Sbound}
\|u\|_{S(\Lambda_{s_c})}<\infty,
\end{equation}
which implies that the solution of  \eqref{eq1} is $H^s$ scattering.

{\bf We say that $SC(u_0)$ holds if \eqref{Sbound} is true for the solution $u=u(t)$ with the initial data $u_0$.}

We first claim that  there exists $\delta>0$ such that if $E[u_0]M[u_0]^{\frac{s-s_c}{s_c}}<\delta$ and
$\|u_{0}\|_{2}^{\frac{s-s_c}{s_c}}\|D^s u_{0}\|_{2}<\|Q\|_{2}^{\frac{s-s_c}{s_c}}\|D^s Q\|_{2},$
 then \eqref{Sbound} holds. Indeed,
if $$E[u_0]M[u_0]^{\frac{s-s_c}{s_c}}<\frac{s_c}{\gamma}\delta_{sd}^{\frac{2s}{s_c}},$$
where $\delta_{sd}$ is simply the  $C\delta_{sd}$ appearing in Proposition \ref{sd}, and\ \ \ 
$\|u_{0}\|_{2}^{\frac{s-s_c}{s_c}}\|D^s u_{0}\|_{2}<\|Q\|_{2}^{\frac{s-s_c}{s_c}}\|D^s Q\|_{2},$ we obtain from Lemma \ref{lemcompare} that
$$\|u_0\|_{\dot{H}^{s_c}}^2
\leq\|u_{0}\|_{2}^{\frac{2(s-s_c)}{s}}\|D^s u_{0}\|_{2}^{\frac{2s_c}{s}}
\leq\left( \frac\gamma{s_c}E[u_0]M[u_0]^{\frac{s-s_c}{s_c}}\right)^{\frac{s_c}{s}}
\leq\delta_{sd}^2,$$
which implies that  $SC(u_0)$ holds by the small data theory. The claim holds for $\delta=\frac{s_c}{\gamma}\delta_{sd}^{\frac{2s}{s_c}}$.
Now, for each $\delta$, we define the set $S_\delta$ to be the collection of all such initial data in $H^s$ :
$$S_\delta=\{u_0\in H^s:\ \  E[u_0]M[u_0]^{\frac{s-s_c}{s_c}}<\delta \ \
{\rm and} \ \  M[u_{0}]^{\frac{s-s_c}{s_c}}\|D^s u_{0}\|_{2}^2<M[Q]^{\frac{s-s_c}{s_c}}\|D^s Q\|_{2}^2\}.$$
We also define that $(ME)_c=\sup\{\delta:\ \ u_0\in S_\delta\Rightarrow SC(u_0)\ \  holds \}.$
If $(ME)_c=M[Q]^{\frac{s-s_c}{s_c}}E[Q]$, then we are done. Thus, we assume now that
\begin{equation}\label{me}
(ME)_c<M[Q]^{\frac{s-s_c}{s_c}}E[Q].
\end{equation}
Then, there exists a sequence of solutions $u_n$ to \eqref{eq1} with $H^s$ initial data $u_{n,0}$
(note from the beginning of the above section that  we can rescale them to satisfy $\|u_n\|_2=1$ )
such that $\|D^s u_{n,0}\|_{2}<\|Q\|_{2}^{\frac{s-s_c}{s_c}}\|D^s Q\|_{2}$  and $E[u_{n,0}]\downarrow (ME)_c$ as
$n\rightarrow \infty,$
and   $SC(u_0)$ does not hold for any $n$.

Our goal in this section is to show the existence of an $H^s$ solution $u_c$ to \eqref{eq1} with initial data
$u_{c,0}$ such that $\| u_{c,0}\|_{2}^{\frac{s-s_c}{s_c}}\|D^s u_{c,0}\|_{2}<\|Q\|_{2}^{\frac{s-s_c}{s_c}}\|D^s Q\|_{2}$
and $M[u_c]^{\frac{s-s_c}{s_c}}E[u_c]= (ME)_c$
for which $SC(u_{c,0})$ does not hold. Moreover, we will show that
$K=\{u_c(\cdot,t)|0\leq t<\infty\}$ is precompact in  $H^s$.
This will play an important role in the rigidity theorem in the next section, which will
ultimately lead to a contradiction.

Prior to fulfilling  our main task, we will first introduce a profile decomposition lemma that is highly similar to that in \cite{radial}, which is for the cubic Schr\"{o}dinger equation in the spirit  of Keraani \cite{keraani2001}.

\begin{lemma}\label{lpd}
(Profile expansion) Let $\phi_{n}(x)$ be a radial and uniformly bounded
sequence in $H^s$.
Then, for each M, there exists a subsequence of $\phi_{n}$, also
denoted by $\phi_{n}$, and \\
(1) for each $1\leq j\leq M$, there
exists a (fixed in n) profile $\psi^{j}(x)$ in $H^s$,\\
 (2) for each $1\leq j\leq M$, there exists a sequence (in n) of time
shifts $t_{n}^{j}$, \\
(3) there exists a
sequence (in n) of remainders $W_{n}^{M}(x)$ in $H^s$ such that
$$\phi_{n}(x)=\sum_{j=1}^{M}U(-t_{n}^{j})\psi^{j}(x)+W_{n}^{M}(x).$$
The time and space sequences have a pairwise divergence property,
i.e., for $1\leq j\neq k\leq M$, we have
\begin{equation}\label{divergence}
\lim_{n\rightarrow+\infty}
|t_{n}^{j}-t_{n}^{k}|=+\infty.
\end{equation}
The remainder sequence has the following asymptotic smallness
property:
\begin{equation}\label{remainder}
\lim_{M\rightarrow+\infty}[\lim_{n\rightarrow+\infty}\|U(t)W_{n}^{M}\|_{S(\Lambda_{s_c})}]=0.
\end{equation}
For fixed M and any $0\leq \alpha\leq s$, we have the asymptotic
Pythagorean expansion:
\begin{equation}\label{hsexpansion}
\|\phi_{n}\|_{\dot{H}^{\alpha}}^{2}=\sum_{j=1}^{M}\|\psi^{j}\|_{\dot{H}^{\alpha}}^{2}+\|W_{n}^{M}\|_{\dot{H}^{\alpha}}^{2}+o_{n}(1).
\end{equation}
\end{lemma}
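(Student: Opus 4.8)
The plan is to run the standard inductive profile-extraction scheme of Keraani, in the radial form used in \cite{radial}, where only time translations occur (no spatial translations, since a weak limit of a radial sequence is again radial). Write $A_0:=\limsup_{n\to\infty}\|\phi_n\|_{H^s}<\infty$. The analytic heart of the argument is an \emph{inverse Strichartz estimate}: there exist $\beta>0$ and $c=c(A_0)>0$ such that for any radial sequence $(g_n)$ bounded in $H^s$ by $A_0$ with $A:=\limsup_n\|U(t)g_n\|_{S(\Lambda_{s_c})}>0$, one can pass to a subsequence, find time shifts $\tau_n\in\mathbb R$ and a nonzero radial $\psi\in H^s$ with $U(\tau_n)g_n\rightharpoonup\psi$ weakly in $H^s$ and $\|\psi\|_{\dot H^{s_c}}\ge cA^{\beta}$. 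This is proved from the radial Strichartz estimates of Proposition~\ref{prostrichartz} (which, since $s\ge N/(2N-1)$, hold without loss of derivatives) together with a refined, frequency-localized (Besov-type) Strichartz inequality of the shape $\|U(t)g\|_{S(\Lambda_{s_c})}\lesssim\|g\|_{\dot H^{s_c}}^{1-\theta}\|g\|_{X}^{\theta}$ for a weaker norm $X$: one then locates a single dyadic frequency block and a space--time point carrying a fixed proportion of the Strichartz mass, translates in time to that point, and extracts a weak limit by Banach--Alaoglu. Obtaining this refined estimate in the radial fractional setting, and with it a quantitative $\dot H^{s_c}$-lower bound on the extracted profile, is the main technical obstacle; once it is in hand, the remainder of the proof is the routine bookkeeping of the concentration-compactness method.

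I would then extract profiles one at a time. Apply the inverse Strichartz estimate with $g_n=\phi_n$ and $A_1:=\limsup_n\|U(t)\phi_n\|_{S(\Lambda_{s_c})}$. If $A_1=0$, take every $\psi^j=0$ and $W_n^M=\phi_n$, so that \eqref{remainder} is immediate and \eqref{hsexpansion}, \eqref{divergence} are vacuous; otherwise obtain $\psi^1$, $t_n^1$ as above and set $W_n^1:=\phi_n-U(-t_n^1)\psi^1$, so that $U(t_n^1)W_n^1=U(t_n^1)\phi_n-\psi^1\rightharpoonup0$ in $H^s$. Since $U(t_n^1)$ is unitary on each $\dot H^\alpha$, expanding $\|\phi_n\|_{\dot H^\alpha}^2=\|\psi^1+U(t_n^1)W_n^1\|_{\dot H^\alpha}^2$ and letting the cross term vanish gives \eqref{hsexpansion} for $M=1$. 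Inductively, given $W_n^{M-1}$, set $A_M:=\limsup_n\|U(t)W_n^{M-1}\|_{S(\Lambda_{s_c})}$; if $A_M=0$ stop, otherwise apply the inverse Strichartz estimate to $g_n=W_n^{M-1}$ to get $\psi^M$, $t_n^M$ with $\|\psi^M\|_{\dot H^{s_c}}\ge cA_M^{\beta}$ and $U(t_n^M)W_n^{M-1}\rightharpoonup\psi^M$, and define $W_n^M:=W_n^{M-1}-U(-t_n^M)\psi^M$ (note $U(t_n^M)W_n^M\rightharpoonup0$). Iterating the Pythagorean computation through all levels yields \eqref{hsexpansion} for every $M$ and $0\le\alpha\le s$; taking $\alpha=s_c$ and $\alpha=s$ gives $\sum_{j\ge1}\|\psi^j\|_{\dot H^{s_c}}^2\lesssim A_0^2$ and $\sum_{j\ge1}\|\psi^j\|_{\dot H^s}^2\lesssim A_0^2$, so $\|\psi^M\|_{\dot H^{s_c}}\to0$. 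The lower bound $\|\psi^M\|_{\dot H^{s_c}}\ge cA_M^{\beta}$ then forces $A_M\to0$, and since $\limsup_n\|U(t)W_n^M\|_{S(\Lambda_{s_c})}\le A_{M+1}$ is nonincreasing in $M$, this is exactly \eqref{remainder}. A diagonal argument over $n$ makes all the successive subsequence extractions simultaneous.

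It remains to prove the pairwise divergence \eqref{divergence}. Suppose it fails for some $j<k$; passing to a subsequence, $t_n^j-t_n^k\to\tau_\ast\in\mathbb R$. Using $W_n^{k-1}=W_n^{j-1}-\sum_{\ell=j}^{k-1}U(-t_n^\ell)\psi^\ell$, write $U(t_n^j)W_n^{k-1}=\bigl(U(t_n^j)W_n^{j-1}-\psi^j\bigr)-\sum_{\ell=j+1}^{k-1}U(t_n^j-t_n^\ell)\psi^\ell$; the first bracket tends to $0$ weakly by construction, and each remaining term tends to $0$ weakly because $|t_n^j-t_n^\ell|\to\infty$ (known inductively for pairs with indices $<k$) and $U(t)\chi\rightharpoonup0$ in $H^s$ as $|t|\to\infty$ for fixed $\chi$. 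Hence $U(t_n^j)W_n^{k-1}\rightharpoonup0$. On the other hand, $U(t_n^j)W_n^{k-1}=U(t_n^j-t_n^k)\,U(t_n^k)W_n^{k-1}\rightharpoonup U(\tau_\ast)\psi^k$ by strong continuity of $t\mapsto U(t)$ combined with $U(t_n^k)W_n^{k-1}\rightharpoonup\psi^k$. Therefore $U(\tau_\ast)\psi^k=0$, i.e. $\psi^k=0$, contradicting $\|\psi^k\|_{\dot H^{s_c}}\ge cA_k^{\beta}>0$. This establishes \eqref{divergence} and completes the construction.
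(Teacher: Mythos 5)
The paper does not actually prove Lemma~\ref{lpd}; Remark~\ref{reproof} simply defers to \cite{qgcpde16}, which in turn follows the Keraani/Holmer--Roudenko scheme you describe. Your bookkeeping matches that scheme and is correct where you carry it out: the iterative extraction, the Pythagorean expansion via vanishing of the cross term (which does go through for every $0\le\alpha\le s$, by splitting Fourier space at $|\xi|=R$, using weak $L^2$ convergence for $|\xi|\le R$ and the uniform $\dot H^\alpha$ tail bound for $|\xi|>R$), the deduction $\sum_j\|\psi^j\|^2_{\dot H^{s_c}}<\infty\Rightarrow A_M\to 0\Rightarrow\eqref{remainder}$, and the $\tau_\ast$-contradiction for \eqref{divergence} are all sound.

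The genuine gap is precisely the step you flag and then wave at: the inverse Strichartz inequality. A one-sentence allusion to a ``refined, frequency-localized (Besov-type) Strichartz inequality'' plus ``locate a dyadic block and a space--time point, translate, weak limit'' does not constitute a proof, and in the fractional radial setting this is where essentially all of the work lives. Beyond simply proving that refined estimate for $e^{-it(-\triangle)^s}$, two further points must be addressed before the extraction yields the decomposition in the form the lemma asserts. First, your scheme produces only time shifts and no dilations, but you never explain why: for a sequence bounded merely in $\dot H^{s_c}$ one would expect scaling parameters $\lambda_n^j$, and what eliminates them is exactly the hypothesis of boundedness in the full $H^s$ (the $L^2$ bound rules out $\lambda_n\to 0$, the $\dot H^s$ bound rules out $\lambda_n\to\infty$ since $s>s_c$, so the concentrating dyadic block is pinned to scale $O(1)$); this must be said, since it is the mechanism that makes the lemma's scaling-free statement true. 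Second, you invoke radiality to dismiss space translations, but the refined Strichartz argument localizes at a space--time \emph{point}, not just a time, so one still has to argue that for radial data the spatial concentration point can be taken at the origin (otherwise the radial constraint forces it to zero along a subsequence). Until the inverse Strichartz lemma is stated precisely in the radial fractional setting with these two points resolved and proved, the proposal is an accurate roadmap but not a proof.
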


\begin{remark}\label{reproof}
The proof of the linear profile decomposition could simply follow the proof in \cite{qgcpde16} without any significant changes.
Furthermore, from the proof, the vanishing property \eqref{remainder} could  be improved to
\begin{equation}\label{remainder'}
\lim_{M\rightarrow+\infty}[\lim_{n\rightarrow+\infty}\|U(t)W_{n}^{M}\|_{L^qL^r}]=0,\ \ \forall (q,r)\ {\rm satisfies}\ \eqref{gap}\
{\rm with}\ \theta=s_c,
\end{equation}
especially,
\begin{equation}\label{remainder''}
\lim_{M\rightarrow+\infty}[\lim_{n\rightarrow+\infty}\|U(t)W_{n}^{M}\|_{L^\infty L^{\frac{2N}{N-2s_c}}}]=0.
\end{equation}
\end{remark}

 \begin{lemma}\label{energy  expansion}(Energy Pythagorean expansion)
 In the situation of Lemma \ref{lpd}, we have
 \begin{equation}\label{epe}
E[\phi_{n}]=\sum_{j=1}^{M}E[U(-t_n^j)\psi^{j}]+E[W_{n}^{M}]+o_n(1).
\end{equation}
\end{lemma}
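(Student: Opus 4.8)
The plan is to reduce the energy expansion to the already-established Pythagorean expansions for the kinetic ($\dot H^s$) and $L^2$ norms, so that the only real work is to control the nonlinear (potential) part $V(\phi_n)=\int\int \frac{|\phi_n(x)|^2|\phi_n(y)|^2}{|x-y|^\gamma}\,dxdy$. Writing $E[\phi_n]=\tfrac12\|D^s\phi_n\|_2^2-\tfrac14 V(\phi_n)$, and similarly for each piece, and using \eqref{hsexpansion} with $\alpha=s$, I see that \eqref{epe} is equivalent to the claim
\begin{equation}\label{Vexp}
V(\phi_n)=\sum_{j=1}^M V\bigl(U(-t_n^j)\psi^j\bigr)+V(W_n^M)+o_n(1),
\end{equation}
for each fixed $M$. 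So the whole proof comes down to proving \eqref{Vexp}.

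To prove \eqref{Vexp} I would expand $\phi_n=\sum_{j=1}^M f_n^j+W_n^M$ with $f_n^j=U(-t_n^j)\psi^j$, and multiply out the quartic form $V$, which is a sum of $(M+1)^4$ terms of the shape $\int\int \frac{g_1(x)\overline{g_2(x)}\,g_3(y)\overline{g_4(y)}}{|x-y|^\gamma}\,dxdy$ where each $g_i\in\{f_n^1,\dots,f_n^M,W_n^M\}$. The diagonal terms $g_1=g_2=g_3=g_4=f_n^j$ and $g_1=\cdots=g_4=W_n^M$ are exactly the terms appearing on the right of \eqref{Vexp}; every remaining term is a genuine ``cross term'' in which at least two distinct indices appear. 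The task is to show each cross term is $o_n(1)$. For cross terms involving two distinct profiles $f_n^j$, $f_n^k$ ($j\ne k$), I would use the pairwise divergence \eqref{divergence}, i.e. $|t_n^j-t_n^k|\to\infty$: by the Hardy--Littlewood--Sobolev inequality and Hölder, such a term is bounded by a product of mixed Lebesgue norms of $f_n^j$ and $f_n^k$, at least one factor of which is of the form $\|U(t_n^j-t_n^k)\psi^k\|_{L^p}$ for a subcritical exponent $p$, and the standard dispersive decay $\|U(\tau)\psi\|_{L^p}\to0$ as $|\tau|\to\infty$ (for $2<p\le\frac{2N}{N-2s_c}$, using density of nice $\psi$ and the Strichartz bounds of Section 2) forces it to zero. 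For cross terms involving the remainder $W_n^M$ together with at least one profile, I would use the asymptotic smallness \eqref{remainder'}--\eqref{remainder''}: after an HLS/Hölder estimate one factor is a norm $\|U(t)W_n^M\|_{L^qL^r}$ (in particular $\|U(t)W_n^M\|_{L^\infty L^{2N/(N-2s_c)}}$) which tends to $0$ first as $n\to\infty$ and then as $M\to\infty$, while the remaining factors stay uniformly bounded by the $H^s$-boundedness of $\phi_n$ together with \eqref{hsexpansion}. Summing the finitely many cross terms gives \eqref{Vexp}, hence \eqref{epe}.

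The main obstacle is the bookkeeping of the cross terms and, more substantively, making sure that for \emph{every} cross term one can extract a factor that genuinely decays — i.e. choosing the Hölder/HLS exponents so that the ``bad'' factor lands in a norm controlled either by dispersive decay (for profile--profile terms with diverging time shifts) or by \eqref{remainder'} (for terms touching $W_n^M$), while the other factors remain in norms uniformly bounded in $n$. One subtlety is that $V$ involves four factors split as $x$-pair and $y$-pair, so a term like $f_n^j\,\overline{f_n^k}$ appearing inside one spatial slot still needs to be handled; here the key point is that the frequency/time-translation structure $U(-t_n^j)\psi^j$ is preserved under products only loosely, so one estimates $\|f_n^j\|_{L^p}$ and $\|f_n^k\|_{L^{p'}}$ separately rather than the product $\|f_n^j\overline{f_n^k}\|$. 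A second point of care: the profile--profile cross terms for a \emph{fixed} pair $(j,k)$ vanish as $n\to\infty$, and since $M$ is fixed there are only finitely many such pairs, so no second limit in $M$ is needed for those; the double limit is needed only for the $W_n^M$ terms, exactly matching the structure of \eqref{remainder'}. Once these estimates are in place the conclusion is immediate.
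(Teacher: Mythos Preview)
Your reduction to \eqref{Vexp} is correct, and the overall strategy of expanding the quartic form $V$ and killing cross terms is reasonable. But there is a genuine gap in your treatment of the cross terms involving the remainder $W_n^M$.

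You propose to bound such a term by (a constant times) $\|U(t)W_n^M\|_{L^\infty L^{2N/(N-2s_c)}}$ and then invoke \eqref{remainder'}--\eqref{remainder''}. The problem is that \eqref{remainder'} asserts only the \emph{iterated} limit $\lim_{M\to\infty}\big[\lim_{n\to\infty}\|U(t)W_n^M\|_{L^qL^r}\big]=0$; for \emph{fixed} $M$ the inner quantity need not vanish. Concretely, if some later profile $\psi^{j_0}$ with $j_0>M$ has a bounded time shift $t_n^{j_0}$, then $\sup_t\|U(t)W_n^M\|_{L^p}$ stays bounded away from zero for all $n$. Since \eqref{epe} is claimed for each fixed $M$ with error $o_n(1)$, your mechanism does not close. (Your phrase ``tends to $0$ first as $n\to\infty$ and then as $M\to\infty$'' describes exactly the limit that is \emph{not} available here.) There is also a smaller inaccuracy in the profile--profile part: after H\"older you obtain factors $\|U(-t_n^k)\psi^k\|_{L^p}$, not $\|U(t_n^j-t_n^k)\psi^k\|_{L^p}$; the relative shift $t_n^j-t_n^k$ never appears in a pure Lebesgue bound. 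What actually saves these terms is that, by \eqref{divergence}, at most one sequence $t_n^{j}$ can be bounded, so every profile--profile cross term contains at least one factor with $|t_n^j|\to\infty$, hence decaying in $L^p$.

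The paper's proof avoids the quartic expansion altogether by a short case analysis that exploits the \emph{radiality} of the $\phi_n$ (which you never use). By \eqref{divergence}, either (Case 1) exactly one shift $t_n^{j_0}$ is bounded, or (Case 2) all $|t_n^j|\to\infty$. In Case~1 one uses the compact embedding $H^s_{\rm rad}\hookrightarrow L^p$ (for $2<p<\tfrac{2N}{N-2s}$) together with the weak convergence $W_n^{j_0-1}\rightharpoonup\psi^{j_0}$ coming from the profile construction, to conclude directly that $V(\phi_n)\to V(\psi^{j_0})$, $V(W_n^M)\to 0$ for $M\ge j_0$, and $V(U(-t_n^k)\psi^k)\to 0$ for $k\ne j_0$ by dispersive decay; this yields \eqref{Vexp} with all terms computed explicitly. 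Case~2 is even simpler: all $V(U(-t_n^j)\psi^j)\to 0$ by dispersive decay, and $\phi_n-W_n^M\to 0$ in $L^p$, so $V(\phi_n)-V(W_n^M)\to 0$. If you want to keep your quartic-expansion route, the repair is either (i) to use radial compactness and the weak orthogonality $U(t_n^{j})W_n^M\rightharpoonup 0$ (for $j\le M$) to get $\|W_n^M\|_{L^p}\to 0$ when some bounded shift lies among the first $M$ profiles, combined with the observation above when it does not; or (ii) to introduce a second index $M_1\ge M$ and approximate $W_n^M$ by $W_n^{M_1}+\sum_{j=M+1}^{M_1}U(-t_n^j)\psi^j$, using \eqref{remainder'} only for $W_n^{M_1}$ with $M_1$ large. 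Either way, the bare appeal to \eqref{remainder'} is not enough.
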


\begin{proof}
According to \eqref{hsexpansion}, it suffices to establish that for all $M\geq1$,
 \begin{equation}\label{hartree term expansion}
V(\phi_{n})=\sum_{j=1}^{M}V(U(-t_n^j)\psi^{j})+V(W_{n}^{M}).
\end{equation}
There are only two cases to consider.
Case 1. There exists some $j$ for which $t_n^j$ converges to a finite number, which without loss of generality,
we assume is 0. In this case, we will show that $\lim\limits_{n\rightarrow\infty}V(W_n^M)=0$  for $M>j$,
 $\lim\limits_{n\rightarrow\infty}V(U(-t_{n}^{k})\psi^{k})=0$ for all $k\neq j$, and
 $\lim\limits_{n\rightarrow\infty}V(\phi_n)=V(\psi^{j})$, which gives \eqref{hartree term expansion}.
 Case 2. For all $j$, $|t_n^j|\rightarrow\infty$. In this case, we will show that
 $\lim\limits_{n\rightarrow\infty}V(U(-t_{n}^{k})\psi^{k})=0$ for all $k$ and that
 $\lim\limits_{n\rightarrow\infty}V(\phi_n)=\lim\limits_{n\rightarrow\infty}V(W_n^M)$, which  gives
 \eqref{hartree term expansion} again.

 For Case 1, we infer from the proof of Lemma \ref{lpd} that $W_n^{j-1}\rightharpoonup\psi^j$.
 By the compactness of the embedding $H^s_{r}\hookrightarrow L^{p},\forall p\in(2,\frac{2N}{N-2s})$, it follows
 from that Hardy-Littlewood-Sobolev inequalities that
$V(W_n^{j-1})\rightarrow V(\psi^j)$. Let $k\neq j$. Then, we obtain from \eqref{divergence}
that $|t_n^k|\rightarrow\infty$. As argued in the proof of Lemma \ref{lpd}, from the Sobolev embedding
and the $L^p$ spacetime decay estimates (or the dispersive estimates; see \cite{g-p-w08}) of the linear flow, we find that
$V(U(-t_{n}^{k})\psi^{k})\rightarrow0$. 
Recalling that $$
W_n^{j-1}=\phi_n-U(-t_{n}^{1})\psi^{1}-\cdots -U(-t_{n}^{j-1})\psi^{j-1},$$
we conclude that $V(\phi_n)\rightarrow V(\psi^j)$. Because
$$W_n^M=
W_n^{j-1}-\psi^j-U(-t_{n}^{j+1})\psi^{j+1}-\cdots -U(-t_{n}^{M})\psi^{M},$$
we also conclude that $\lim\limits_{n\rightarrow\infty}V(W_n^M)\rightarrow0$  for $M>j$.

Case 2 follows similarly from the proof of Case 1.
\end{proof}

\begin{proposition}\label{critical}
(Existence of a critical solution)
There exists a global solution $u_c$ in $H^s$ with initial data $u_{c,0}$ such that
$\|u_{c,0}\|_2=1,$
$$E[u_c]=(ME)_c<M[Q]^{\frac{s-s_c}{s_c}}E[Q],\ \ \ \|D^s u_c\|_2^2<M[Q]^{\frac{s-s_c}{s_c}}\|D^s Q\|_2^2,\ \ for \ \ all\ \ 0\leq t<\infty,$$
and $$\| u_c\|_{S(\Lambda_{s_c})}=+\infty.$$
\end{proposition}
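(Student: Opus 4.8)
The plan is to carry out the concentration--compactness/rigidity argument in the mass--energy invariant formulation. Start from the sequence produced after \eqref{me}: radial solutions $u_n$ with data $u_{n,0}$ normalised so that $\|u_{n,0}\|_2=1$, $\|D^su_{n,0}\|_2^2<M[Q]^{\frac{s-s_c}{s_c}}\|D^sQ\|_2^2$, $E[u_{n,0}]\downarrow(ME)_c$, and $SC(u_{n,0})$ fails for every $n$. By Lemma~\ref{lemcompare} the sequence $\{u_{n,0}\}$ is bounded in $H^s$, so Lemma~\ref{lpd} gives, for each $M$, profiles $\psi^j$, time shifts $t_n^j$ and remainders $W_n^M$ with $u_{n,0}=\sum_{j=1}^MU(-t_n^j)\psi^j+W_n^M$, obeying the pairwise divergence \eqref{divergence} and the smallness \eqref{remainder}. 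Combining the $\dot H^\alpha$ expansions \eqref{hsexpansion} for $\alpha=0$ and $\alpha=s$ with the energy expansion \eqref{epe}, and writing $\mathcal E^j:=\lim_{n}E[U(-t_n^j)\psi^j]$, we record
\[
\sum_{j\le M}\|\psi^j\|_2^2+\lim_{n}\|W_n^M\|_2^2=1,\qquad
\sum_{j\le M}\|D^s\psi^j\|_2^2+\lim_{n}\|D^sW_n^M\|_2^2\le M[Q]^{\frac{s-s_c}{s_c}}\|D^sQ\|_2^2,
\]
\[
\sum_{j\le M}\mathcal E^j+\lim_{n}E[W_n^M]=(ME)_c.
\]

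Since every partial sum of $\dot H^s$-norms stays below the soliton threshold, the sharp Gagliardo--Nirenberg inequality \eqref{G-N}--\eqref{CGN} (as in Lemma~\ref{lemcompare}) forces $\mathcal E^j\ge0$ and $\lim_{n}E[W_n^M]\ge0$, with a lower bound $\mathcal E^j\ge c_0\|D^s\psi^j\|_2^2$ for some $c_0>0$; moreover $\mathcal E^j=\frac{1}{2}\|D^s\psi^j\|_2^2$ whenever $|t_n^j|\to\infty$, because then $V(U(-t_n^j)\psi^j)\to0$ by dispersion (the computation behind \eqref{epe}). In particular $\mathcal E^j\le(ME)_c$, $\|\psi^j\|_2\le1$ and $E[\psi^j]M[\psi^j]^{\frac{s-s_c}{s_c}}\le\mathcal E^j\le(ME)_c$. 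I claim that exactly one profile is nonzero, that (after a harmless modification) its time shift is bounded, that the remainder tends to $0$ in $H^s$, and that this profile carries the full mass and energy. Suppose the claim fails; then either at least two of the pieces (two profiles, or one profile and the remainder) carry strictly positive energy, or a single nonzero profile has $|t_n^j|\to\infty$. In the first case each nonzero piece is \emph{strictly} subcritical, $E[\psi^j]M[\psi^j]^{\frac{s-s_c}{s_c}}<(ME)_c$, hence lies in some $S_\delta$ with $\delta<(ME)_c$ and $SC$ holds for it with a finite $S(\Lambda_{s_c})$ bound; for a piece with $|t_n^j|\to\infty$ one first replaces $U(-t_n^j)\psi^j$ by the nonlinear solution $v^j$ given by the forward or backward wave operator of Proposition~\ref{wave operator} (same mass and energy, scattering to $U(t-t_n^j)\psi^j$), and checks that $v^j$ again lies below the scattering threshold, using $\mathcal E^j<(ME)_c$ together with Lemma~\ref{lemcompare} and Remark~\ref{reQ} to upgrade the gradient bound to a strict one. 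The high-index tail $\sum_{j>J}$ has arbitrarily small energy and is absorbed by the small-data theory, Proposition~\ref{sd}. Forming $\tilde u_n:=\sum_{j\le M}v^j(t-t_n^j)+U(t)W_n^M$ we obtain a uniform bound $\|\tilde u_n\|_{S(\Lambda_{s_c})}\le A$; since each $v^j$ solves \eqref{eq1} exactly and $U(t)W_n^M$ solves the linear equation, the error $e_n$ reduces to cross terms between pieces with divergent relative shifts, so $\|e_n\|_{S'(\Lambda_{-s_c})}\to0$ by \eqref{divergence}, \eqref{remainder} and the Hardy--Littlewood--Sobolev and Strichartz estimates, while $\|U(t)(u_{n,0}-\tilde u_n(0))\|_{S(\Lambda_{s_c})}\to0$. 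The long-time perturbation theory, Proposition~\ref{properturb}, then yields $\|u_n\|_{S(\Lambda_{s_c})}<\infty$ for large $n$, contradicting the failure of $SC(u_{n,0})$. The remaining possibility, a single nonzero profile $\psi^1$ with $|t_n^1|\to\infty$ carrying all the energy, is not contradictory but is already the desired conclusion: the corresponding wave-operator solution $v^1$ satisfies $\|v^1\|_2=1$, $E[v^1]=(ME)_c$, stays below the gradient threshold, and $SC(v^1(0))$ must fail (else the same perturbation argument would bound $\|u_n\|_{S(\Lambda_{s_c})}$), so one may take $u_c:=v^1$.

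Thus we are reduced to one profile $\psi:=\psi^1$ with $t_n^1\to t^\ast$, and absorbing $t^\ast$ we may write $u_{n,0}=\psi+\widetilde W_n$ with $\widetilde W_n\rightharpoonup0$ in $H^s$ and $\lim_{n}\|U(t)\widetilde W_n\|_{S(\Lambda_{s_c})}=0$. If $E[\psi]<(ME)_c$, then $SC(\psi)$ holds, and the perturbation argument with $\tilde u_n:=v(t)+U(t)\widetilde W_n$ ($v$ the solution of \eqref{eq1} with data $\psi$) forces $\|u_n\|_{S(\Lambda_{s_c})}<\infty$, a contradiction; hence $E[\psi]=(ME)_c$, so $\lim_{n}E[\widetilde W_n]=0$ and, by Lemma~\ref{lemcompare}, $\lim_{n}\|D^s\widetilde W_n\|_2=0$. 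If $\|\psi\|_2<1$, then $E[\psi]M[\psi]^{\frac{s-s_c}{s_c}}<(ME)_c$ and $SC(\psi)$ holds, again a contradiction; hence $\|\psi\|_2=1$, so $\lim_{n}\|\widetilde W_n\|_2=0$ and $u_{n,0}\to\psi$ in $H^s$. Set $u_{c,0}=\psi$: then $\|u_{c,0}\|_2=1$, $E[u_{c,0}]=(ME)_c<M[Q]^{\frac{s-s_c}{s_c}}E[Q]$, and $\|D^su_{c,0}\|_2^2<M[Q]^{\frac{s-s_c}{s_c}}\|D^sQ\|_2^2$ (equality is impossible, since with $\lim_{n}\|D^s\widetilde W_n\|_2=0$ it would force, via Lemma~\ref{lemcompare} and Remark~\ref{reQ}, $E[u_{c,0}]\ge E[Q]M[Q]^{\frac{s-s_c}{s_c}}>(ME)_c$). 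Therefore $u_{c,0}\in K_1$, Theorem~\ref{dichotomy}(i) provides the global solution $u_c$, and the invariance of $K_1$ keeps $\|D^su_c(t)\|_2^2<M[Q]^{\frac{s-s_c}{s_c}}\|D^sQ\|_2^2$ for all $t$. Finally $\|u_c\|_{S(\Lambda_{s_c})}=+\infty$, for otherwise, since $u_{n,0}\to u_{c,0}$ in $H^s$, Proposition~\ref{properturb} would give $\|u_n\|_{S(\Lambda_{s_c})}<\infty$ for large $n$, contradicting the construction of the $u_n$.

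The main obstacle is the nonlinear superposition inside the dichotomy step: assembling $\tilde u_n$ from the nonlinear profiles (including the wave-operator solutions attached to divergent time shifts), proving the \emph{uniform} Strichartz bound $\|\tilde u_n\|_{S(\Lambda_{s_c})}\le A$ by peeling off a small-energy tail handled by Proposition~\ref{sd}, and --- most delicately --- showing that every cross term in $e_n$ is negligible in $S'(\Lambda_{-s_c})$; this is where the pairwise divergence \eqref{divergence}, the vanishing \eqref{remainder} of the free evolution of the remainder, and the radial Strichartz together with the Hardy--Littlewood--Sobolev estimates all have to be combined so that Proposition~\ref{properturb} can be applied.
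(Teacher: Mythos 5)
Your proof is correct and follows essentially the same route as the paper: profile decomposition plus the energy Pythagorean expansion, reduction to a single profile via the long-time perturbation theory together with the wave operator of Proposition~\ref{wave operator} for divergent time shifts, and then identification of $u_c$ as that single profile. The only cosmetic difference is that you fold $U(t)W_n^M$ into $\tilde u_n$ rather than leaving it as the initial-data mismatch term in Proposition~\ref{properturb} as the paper does, but both bookkeeping conventions close the argument in the same way.
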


\begin{proof}
Recall that we have obtained the sequence $\|u_n\|_2=1$ described at the beginning of this section satisfying
 $\|D^s u_{n,0}\|_{2}^2<M[Q]^{\frac{s-s_c}{s_c}}\|D^s Q\|_{2}^2$  and $E[u_{n,0}]\downarrow (ME)_c$ as
$n\rightarrow \infty.$ Each $u_n$ is global and non-scattering $\| u_n\|_{S(\Lambda_{s_c})}=+\infty.$
We  apply Lemma \ref{lpd} to $u_{n,0}$, which is uniformly bounded in $H^s$, to obtain
\begin{align}\label{5.7}
u_{n,0}(x)=\sum_{j=1}^{M}U(-t_n^j)\psi^{j}(x)+W_{n}^{M}(x).
\end{align}
Then, by Lemma \ref{energy  expansion} (Energy Pythagorean expansion), we further have
 \begin{equation*}
\sum_{j=1}^{M}\lim_{n\rightarrow\infty}E[U(-t_n^j)\psi^{j}]+\lim_{n\rightarrow\infty}E[W_{n}^{M}]=\lim_{n\rightarrow\infty}E[u_{n,0}]=(ME)_c.
\end{equation*}
Also by the profile expansion, we have
 \begin{equation*}
\|D^s u_{n,0}\|_2^2=\sum_{j=1}^{M}\|D^s U(-t_n^j)\psi^{j}\|_2^2+\|D^s W_{n}^{M}\|_2^2+o_n(1),
\end{equation*}
and
 \begin{equation}\label{5.9}
1=\|  u_{n,0}\|_2^2=\sum_{j=1}^{M}\| \psi^{j}\|_2^2+\|  W_{n}^{M}\|_2^2+o_n(1).
\end{equation}
We know from the proof of Lemma \ref{lemcompare} that each energy is nonnegative, and thus,
\begin{align}\label{5.8}
\lim_{n\rightarrow\infty}E[U(-t_n^j)\psi^{j}]\leq(ME)_c.
\end{align}

{\bf Claim A: only one $\psi^j\neq0$.}

If more than one $\psi^{j}\neq0$, we will show  a contradiction in the following, and thus, the profile expansion
will be reduced  to the case in which  only one  profile is non-trivial.

For this, by  \eqref{5.9}, we must have $M[\psi^{j}]<1$ for each $j$, which together with \eqref {5.8},
implies that for sufficiently large $n$,
 \begin{equation*}
M[U(-t_n^j)\psi^{j}]^{\frac{s-s_c}{s_c}}E[U(-t_n^j)\psi^{j}]<(ME)_c.
\end{equation*}
For a given $j$, if $|t_n^j|\rightarrow+\infty$, we assume $t_n^j\rightarrow+\infty$
or $t_n^j\rightarrow-\infty$  up to a subsequence. In this case, by the proof of
Lemma \ref{energy  expansion}, we have $\lim\limits_{n\rightarrow+\infty}V(U(-t_{n}^{k})\psi^{k})=0,$
and thus,
$\frac{1}{2}\| \psi^{j}\|_2^{\frac{2(s-s_c)}{s_c}}\|D^s \psi^{j}\|_2^2=
\frac{1}{2}\|U(-t_n^j) \psi^{j}\|_2^{\frac{2(s-s_c)}{s_c}}\|D^s U(-t_n^j)\psi^{j}\|_2^2<(ME)_c$.
Then, we obtain from the existence of wave operators (Proposition \ref{wave operator}) that
there exists $\tilde{\psi}^{j}$ such that
$$\|FNLS(-t_{n}^{j})\tilde{\psi}^{j}-U(-t_n^j)\psi^{j}\|_{H^s}\rightarrow0,\ \ {\rm as}\ \ n\rightarrow+\infty$$
with
$$\| \tilde{\psi}^{j}\|_2^{\frac{s-s_c}{s_c}}\|D^s FNLS(t)\tilde{\psi}^{j}\|_2<
\| Q\|_2^{\frac{s-s_c}{s_c}}\|D^s Q\|_2$$
$$\| \tilde{\psi}^{j}\|_2=\| \psi^{j}\|_2,\ \ \ E[\tilde{\psi}^{j}]=\frac{1}{2}\|D^s \psi^{j}\|_2^2,$$
and thus,
$$M[\tilde{\psi}^{j}]^{\frac{s-s_c}{s_c}}E[\tilde{\psi}^{j}]<(ME)_c,\ \ \ \|FNLS(t) \tilde{\psi}^{j}\|_{S(\Lambda_{s_c})}<+\infty.$$

If, on the other hand, for the given $j$,  $t_{n}^{j}\rightarrow t'$ finite, then
by the continuity of the linear flow in $H^s$, we have
$$U(-t_n^j) \psi^{j}\rightarrow U(-t') \psi^{j} \ \ \ {\rm strongly \ \ in}\ \ H^s.$$
In this case, we set $\tilde{\psi}^{j}=FNLS(t')[U(-t') \psi^{j}]$ so that
 $FNLS(-t')\tilde{\psi}^{j}=U(-t') \psi^{j}$.

Above all,  in either case, we have a new profile $\tilde{\psi}^{j}$ for the given $\psi^{j}$  such that
$$\|FNLS(-t_{n}^{j})\tilde{\psi}^{j}-U(-t_n^j)\psi^{j}\|_{H^s}\rightarrow0,\ \ {\rm as}\ \ n\rightarrow+\infty.$$
As a result, we can replace $U(-t_n^j) \psi^{j}$ by $FNLS(-t_{n}^{j})\tilde{\psi}^{j}$ in
\eqref{5.7} and obtain
\begin{align*}\label{5.7}
u_{n,0}(x)=\sum_{j=1}^{M}FNLS(-t_{n}^{j})\tilde{\psi}^{j}(x)+\tilde{W}_{n}^{M}(x),
\end{align*}
where
\begin{equation*}\label{rem}
\lim_{M\rightarrow+\infty}[\lim_{n\rightarrow+\infty}\|U(t)\tilde{W}_{n}^{M}\|_{S(\Lambda_{s_c})}]=0.
\end{equation*}

To use the perturbation theory to obtain a contradiction, we set
$v^j(t)=FNLS(t)\tilde{\psi}^{j}$, $u_n(t)=FNLS(t)u_{n,0}$ and
$$\tilde{u}_n(t)=\sum_{j=1}^{M}v^j(t-t_{n}^{j}).$$
Then, we have
$$i\partial_t\tilde{u}_n-(-\Delta)^s \tilde{u}_n+(\frac1{|\cdot|^\gamma}\ast|\tilde{u}_n|^2)\tilde{u}_n=e_n,$$
where $$e_n=(\frac1{|\cdot|^\gamma}\ast|\tilde{u}_n|^2)\tilde{u}_n-\sum_{j=1}^{M}(\frac1{|\cdot|^\gamma}\ast|v^j(t-t_{n}^{j})|^2)v^j(t-t_{n}^{j}).$$
In the near future,  we will prove the following two claims to obtain the contradiction:
\begin{itemize}
\item  Claim 1 - There exists a large constant $A$ independent of $M$ such that the following holds:
For any $M$, there exists $n_0=n_0(M)$ such that for $n>n_0$, $\|\tilde{u}_n\|_{S(\Lambda_{s_c})}\leq A.$
\item  Claim 2 - For each $M$ and $\epsilon>0$, there exist $n_1=n_1(M,\epsilon)$ such that for $n>n_1$,
$\|e_n\|_{L^{q'_1}L^{r'_1}}\leq \epsilon$ for some pair $(q_1,r_1)\in\Lambda_{-s_c}$.
\end{itemize}
Note that if the two claims hold true,  because $\tilde{u}_n(0)-u_n(0)=\tilde{W}_{n}^{M},$ there exists $M_1=M_1(\epsilon)$
such that for each $M>M_1$, there exists $n_2=n_2(M)$ satisfying
$\|U(t)(\tilde{u}_n(0)-u_n(0))\|_{S(\Lambda_{s_c})}\leq \epsilon.$
Thus,  now by the long-time perturbation theory  Proposition \ref{properturb},  we have
for sufficiently large $n$ and $M$ that $\|u_n\|_{S(\Lambda_{s_c})}<+\infty,$
which is a contradiction, giving Claim A. Thus, it suffices to show the above claims.

Let $M_0$ be sufficiently large such that $\|U(t)\tilde{W}_{n}^{M_0}\|_{S(\Lambda_{s_c})}\leq \delta_{sd}.$
Thus, we know from the definition of $\tilde{W}_{n}^{M_0}$ that for each $j>M_0$, it holds that
$\|U(t)v^j(-t_{n}^{j})\|_{S(\Lambda_{s_c})}\leq \delta_{sd}.$
Similar to the small data scattering and Proposition \ref{wave operator}, we obtain
\begin{align}\label{5.10}
\|v^j(t-t_{n}^{j})\|_{S(\Lambda_{s_c})}\leq  2
\|U(t)v^j(-t_{n}^{j})\|_{S(\Lambda_{s_c})}\leq 2\delta_{sd},
\end{align}
and
\begin{align}\label{5.10'}
\|D^{s_c}v^j(t-t_{n}^{j})\|_{S(\Lambda_0)}\leq c
\|v^j(-t_{n}^{j})\|_{\dot{H}^{s_c}}\ \ \ {\rm for}\ \ j>M_0.
\end{align}
Recall that
$\|v^j(-t_{n}^{j})-U(-t_n^j)\psi^{j}\|_{\dot{H}^{s_c}}\rightarrow 0$
as $n\rightarrow+\infty$.
Then, \eqref{5.10'} implies for $n$ large and $j>M_0$ that
\begin{align}\label{5.10''}
\|D^{s_c}v^j(t-t_{n}^{j})\|_{S(\Lambda_0)}\leq c
\|U(-t_n^j)\psi^{j}\|_{\dot{H}^{s_c}}=c
\|\psi^{j}\|_{\dot{H}^{s_c}}.
\end{align}
Thus, by elementary calculation, we have that
\begin{align}\label{5.11}
\|\tilde{u}_n\|^{q_c}_{L^{q_c}L^{q_c}}
&=\sum_{j=1}^{M_0}\|v^j\|^{q_c}_{L^{q_c}L^{q_c}}
+\sum_{j=M_0+1}^{M}\|v^j\|^{q_c}_{L^{q_c}L^{q_c}}+crossterms\\ \nonumber
&\leq\sum_{j=1}^{M_0}\|v^j\|^{q_c}_{L^{q_c}L^{q_c}}
+c\sum_{j=M_0+1}^{M}\|\psi^{j}\|^{q_c}_{\dot{H}^{s_c}}+crossterms.
\end{align}
Note first that by \eqref{divergence},  the $crossterm$ can be made bounded by taking $n_0$ as sufficiently large.
On the other hand, by \eqref{5.7} and Lemma \ref{lpd},
\begin{align}\label{5.12}
\|u_{n,0}\|_{\dot{H}^{s_c}}^2
&=\sum_{j=1}^{M_0}\|\psi^j\|_{\dot{H}^{s_c}}^2
+\sum_{j=M_0+1}^{M}\|\psi^j\|_{\dot{H}^{s_c}}^2
+\|W_n^{M}\|_{\dot{H}^{s_c}}^2+o_n(1),
\end{align}
which shows that the quantity $\sum_{j=M_0+1}^{M}\|\psi^j\|_{\dot{H}^{s_c}}^{\frac{2(N+2s)}{N+2s-\gamma}}$ is bounded
independently of $M$.
Hence, \eqref{5.11} gives that $\|\tilde{u}_n\|_{L^{q_c}L^{q_c}}$
 is bounded independently of $M$ for $n>n_0$.
 A similar argument will show that $\|\tilde{u}_n\|_{L^{\infty}L^{\frac{2N}{N-2s_c}}}$
is also bounded
independently of $M$ provided that $n>n_0$ is sufficiently  large.
According to the definition of the Strichartz  norm introduced in section 2,
the boundness of
of $\|\tilde{u}_n\|_{S(\Lambda_{s_c})}$ can be obtained by
interpolation between the two exponents. Then, finally,  we have obtained that Claim 1 holds true.

Now, we turn to prove the second claim. We easily have the following expansion of $e_n$:
\begin{align*}
e_n=&\left(\frac1{|\cdot|^\gamma}\ast|\sum_{j=1}^{M}v^j(t-t_{n}^{j})|^2\right)\sum_{j=1}^{M}v^j(t-t_{n}^{j})
-\sum_{j=1}^{M}\left(\frac1{|\cdot|^\gamma}\ast|v^j(t-t_{n}^{j})|^2\right)v^j(t-t_{n}^{j})\\
=&\left(\frac1{|\cdot|^\gamma}\ast\left(|\sum_{j=1}^{M}v^j(t-t_{n}^{j})|^2-\sum_{j=1}^{M}|v^j(t-t_{n}^{j})|^2\right)\right)\sum_{j=1}^{M}v^j(t-t_{n}^{j})\\
&+\left(\frac1{|\cdot|^\gamma}\ast\sum_{j=1}^{M}|v^j(t-t_{n}^{j})|^2\right)\sum_{j=1}^{M}v^j(t-t_{n}^{j})
-\sum_{j=1}^{M}\left(\frac1{|\cdot|^\gamma}\ast|v^j(t-t_{n}^{j})|^2\right)v^j(t-t_{n}^{j})\\
=&\left(\frac1{|\cdot|^\gamma}\ast\left(|\sum_{j=1}^{M}v^j(t-t_{n}^{j})|^2-\sum_{j=1}^{M}|v^j(t-t_{n}^{j})|^2\right)\right)\sum_{j=1}^{M}v^j(t-t_{n}^{j})\\
&+\sum_{j=1}^{M}\left(\frac1{|\cdot|^\gamma}\ast|v^j(t-t_{n}^{j})|^2\right)\sum_{k\neq j}v^k(t-t_{n}^{k}).
\end{align*}
The focus now is on how to estimate the cross terms.
Assume first that  $j\neq k$ and $|t_{n}^{j}- t_{n}^{k}|\rightarrow+\infty$; then,
taking one of the cross terms for example, we have
\begin{align}\label{crossex}
\left\|(\frac1{|\cdot|^\gamma}\ast|v^j|^2)(t-t_{n}^{j})v^k(t-t_{n}^{k})\right\|_{L^{q'_1}L^{r'_1}}&=
\left\|(\frac1{|\cdot|^\gamma}\ast|v^j|^2)(t)v^k(t+t_{n}^{j}-t_{n}^{k})\right\|_{L^{q'_1}L^{r'_1}}.
\end{align}
Using a similar argument as in \eqref{2.7'},
 for the above pair $(q_1,r_1)\in\Lambda_{-s_c}$,
we can find $(q_2,r_2)\in\Lambda_{s_c}$ and apply
the Hardy-Littlewood-Sobolev inequality and H\"older inequalities
to obtain
\begin{align*}
\left\|(\frac1{|\cdot|^\gamma}\ast|v^j|^2)(t)v^k(t+t_{n}^{j}-t_{n}^{k})\right\|_{L^{q'_1}L^{r'_1}}\leq&
\|v^j\|^2_{L^{q_c}L^{r_c}}\|v^k\|_{L^{q_2}L^{r_2}}\\
\leq&
\|v^j\|_{S(\Lambda_{s_c};I_j)}^2\|v^k\|_{S(\Lambda_{s_c};I_j)}.
\end{align*}
If  $j\neq k$, by \eqref{divergence}, $|t_{n}^{j}-t_{n}^{k}|\rightarrow+\infty$, and then,
 we find that \eqref{crossex} goes to zero as $n\rightarrow\infty$.
Observe that all other cross terms will  have the same property through similar estimates,
and  we have proved Claim 2.

Claim 1 and Claim 2 imply Claim A. We have reduced the profile expansion to the case in which
$\psi^1\neq0$, and $\psi^j=0$ for all $j\geq2$. We now begin to show the existence of a critical solution.

By \eqref{5.9}, we have $M[\psi^1]\leq1,$
and by \eqref{5.8}, we have $\lim\limits_{n\rightarrow\infty}E[U(-t_{n}^{1})\psi^{1}]\leq(ME)_c$.
If $t_{n}^{1}$ converges and, without loss of generality, $t_{n}^{1}\rightarrow 0$
as $n\rightarrow+\infty,$ we take $\tilde{\psi}^1=\psi^1$, and then, we have
$\|FNLS(-t_{n}^{1})\tilde{\psi}^1-U(-t_{n}^{1})\psi^1\|_{H^s}\rightarrow0$ as $n\rightarrow+\infty.$
If, on the other hand,  $t_{n}^{1}\rightarrow +\infty,$ then by the proof of
Lemma \ref{energy  expansion}, we have again  $\lim\limits_{n\rightarrow+\infty}V(U(-t_{n}^{1})\psi^{1})=0,$
and thus, $$\frac{1}{2}\|D^s \psi^{1}\|_2^2=\lim_{n\rightarrow\infty}E[U(-t_{n}^{1})\psi^{1}]\leq(ME)_c.$$
Therefore, by Proposition \ref{wave operator}, there exist
$\tilde{\psi}^1$ such that $M[\tilde{\psi}^1]=M[\psi^1]\leq1,$
$E[\tilde{\psi}^1]=\frac{1}{2}\|D^s \psi^{1}\|_2^2\leq(ME)_c,$
and
$\|FNLS(-t_{n}^{1})\tilde{\psi}^1-U(-t_{n}^{1})\psi^1\|_{H^s}\rightarrow0$ as $n\rightarrow+\infty.$

In either case, if we set $\tilde{W}_n^M=W_n^M+(U(-t_{n}^{1})\psi^1-FNLS(-t_{n}^{1})\tilde{\psi}^1),$
then by the Strichartz  estimates, we have
$$\|U(t)\tilde{W}_{n}^{M}\|_{S(\Lambda_{s_c})}
\leq\|U(t)W_{n}^{M}\|_{S(\Lambda_{s_c})}+
c\|U(-t_{n}^{1})\psi^1-FNLS(-t_{n}^{1})\tilde{\psi}^1\|_{S(\Lambda_{s_c})},$$
and thus, $$\lim_{n\rightarrow+\infty}\|U(t)\tilde{W}_{n}^{M}\|_{S(\Lambda_{s_c})}
=\lim_{n\rightarrow+\infty}\|U(t)W_{n}^{M}\|_{S(\Lambda_{s_c})}.$$
Therefore, we have
$$u_{n,0}=FNLS(-t_{n}^{1})\tilde{\psi}^1)+\tilde{W}_{n}^{M}$$
with $M(\tilde{\psi}^1)\leq1,$  $E(\tilde{\psi}^1)\leq(ME)_c$
and $\lim\limits_{M\rightarrow+\infty}[\lim\limits_{n\rightarrow+\infty}\|U(t)\tilde{W}_{n}^{M}\|_{S(\Lambda_{s_c})}]=0.$
Let $u_c$ be the solution to \eqref{eq1} with initial data $u_{c,0}=\psi^1$. Now, if we claim
that $\|u_c\|_{S(\Lambda_{s_c})}=\infty,$ then it must hold that $M[u_c]=1$ and  $E[u_c]=(ME)_c$,
which will complete the proof. Thus, it suffices to establish this claim.
We argue  by contradiction to suppose otherwise that
$$A\equiv\|FNLS(t-t_{n}^{1})\tilde{\psi}^1\|_{S(\Lambda_{s_c})}
=\|FNLS(t)\tilde{\psi}^1\|_{S(\Lambda_{s_c})}
=\|u_c\|_{S(\Lambda_{s_c})}<\infty.$$
By the long-time perturbation theory Proposition \ref{properturb}, we obtain $\epsilon_0=\epsilon_0(A).$
Taking $M$ as sufficiently large and $n_2(M)$ as large enough that for $n>n_2$,
it holds that $\|W_{n}^{M}\|_{S(\Lambda_{s_c})}\leq \epsilon_0.$
Similar to the proof in the  first case, Proposition \ref{properturb}
implies that there exists a large $n$ such that $\|u_c\|_{S(\Lambda_{s_c})}<\infty,$
which is a contradiction.

\end{proof}

\begin{proposition}\label{procompact}
(Precompactness of the flow of the critical solution)
Let $u_c$ be as  in Proposition \ref{critical}; then, if $\|u_c\|_{S([0,+\infty);\Lambda_{s_c})}=\infty$,
$$\{u_c(\cdot,t)| ~t\in[0,+\infty)\}\subset H^s$$
 is precompact in $H^s$.  A corresponding conclusion is reached if
 $\|u_c\|_{S((-\infty,0];\Lambda_{s_c})}=\infty$.

\end{proposition}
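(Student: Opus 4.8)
The plan is to show that any sequence $\{u_c(\cdot,t_n)\}$ with $t_n\in[0,+\infty)$ has a subsequence converging in $H^s$; this is equivalent to precompactness of $\{u_c(\cdot,t)\mid t\in[0,+\infty)\}$. If $\{t_n\}$ has a bounded subsequence we extract $t_n\to t^\ast\in[0,\infty)$ and use continuity of the $H^s$-flow to get $u_c(t_n)\to u_c(t^\ast)$ in $H^s$, so we may assume $t_n\to+\infty$. By Proposition \ref{critical}, $M[u_c]=1$ and $\|D^su_c(t)\|_2^2<M[Q]^{\frac{s-s_c}{s_c}}\|D^sQ\|_2^2$ for all $t$, hence $\phi_n:=u_c(\cdot,t_n)$ is radial and uniformly bounded in $H^s$ and Lemma \ref{lpd} applies:
$$\phi_n=\sum_{j=1}^{M}U(-t_n^j)\psi^j+W_n^M$$
along a subsequence, with the divergence property \eqref{divergence}, the smallness \eqref{remainder} of $W_n^M$, the Pythagorean expansions \eqref{hsexpansion} (for $\alpha=0$ and $\alpha=s$) and the energy expansion \eqref{epe}. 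Since $E[\phi_n]=E[u_c]=(ME)_c<M[Q]^{\frac{s-s_c}{s_c}}E[Q]$ is conserved and the nonlinear energy is nonnegative below the $\dot H^s$-threshold (Lemma \ref{lemcompare}, whose hypothesis passes to each profile and to $W_n^M$ via \eqref{hsexpansion}), the energy expansion forces $\lim_nE[U(-t_n^j)\psi^j]\le(ME)_c$ for every $j$; together with the sub-threshold $\dot H^s$ inequality this is the hypothesis of the wave operator Proposition \ref{wave operator} for each profile.

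First I would reduce to a single nontrivial profile. If two or more profiles were nonzero, then \eqref{hsexpansion} with $\alpha=0$ gives $M[\psi^j]<1$ strictly, hence $M[\psi^j]^{\frac{s-s_c}{s_c}}<1$ (recall $\frac{s-s_c}{s_c}>0$), so each associated nonlinear profile $v^j$---equal to $FNLS(\cdot)\psi^j$ when $t_n^j$ stays bounded and to the wave-operator solution of Proposition \ref{wave operator} when $|t_n^j|\to\infty$---has mass-energy strictly below $(ME)_c$ and sub-threshold $\dot H^s$ norm, hence satisfies $\|v^j\|_{S(\Lambda_{s_c})}<\infty$ by the definition of $(ME)_c$. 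Comparing $u_c$, evolved from time $t_n$, with $\tilde u_n(t):=\sum_jv^j(t-t_n+t_n^j)$---whose global Strichartz norm is bounded uniformly in $n$ and whose error in \eqref{eq1} is small by \eqref{divergence}, exactly as in Claims 1 and 2 of the proof of Proposition \ref{critical}---the long-time perturbation Proposition \ref{properturb} would give $\|u_c\|_{S([t_n,+\infty);\Lambda_{s_c})}<\infty$, contradicting $\|u_c\|_{S([0,+\infty);\Lambda_{s_c})}=\infty$ (note $\|u_c\|_{S([0,t_n];\Lambda_{s_c})}<\infty$ by the local theory). The single profile $\psi^1$ cannot be zero either, for then $\phi_n=W_n^M$ and \eqref{remainder} would already give $\|u_c\|_{S([0,+\infty);\Lambda_{s_c})}<\infty$. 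Hence exactly one profile survives and, by \eqref{hsexpansion} with $\alpha=0$, $M[\psi^1]=1$ and $W_n^M\to0$ in $L^2$ and in $\dot H^{s_c}$.

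It remains to rule out $|t_n^1|\to\infty$. If $t_n^1\to-\infty$, then $U(t)\phi_n=U(t-t_n^1)\psi^1+U(t)W_n^M$, and since the free Strichartz norm decays at $+\infty$ we have $\|U(\cdot-t_n^1)\psi^1\|_{S([0,+\infty);\Lambda_{s_c})}=\|U(\cdot)\psi^1\|_{S([-t_n^1,+\infty);\Lambda_{s_c})}\to0$; with \eqref{remainder} this yields $\|U(\cdot-t_n)\phi_n\|_{S([t_n,+\infty);\Lambda_{s_c})}\to0$, and the small-data Proposition \ref{sd}, applied on $[t_n,+\infty)$, forces $\|u_c\|_{S([t_n,+\infty);\Lambda_{s_c})}\to0$, a contradiction. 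The case $t_n^1\to+\infty$ is excluded by the symmetric argument obtained after time reversal, together with Propositions \ref{wave operator} and \ref{properturb}, as in \cite{radial,km}. Hence $t_n^1\to t^1$ finite, so $U(-t_n^1)\psi^1\to U(-t^1)\psi^1$ strongly in $H^s$. A last application of \eqref{epe}---again using the ``otherwise $u_c$ scatters'' mechanism to rule out $E[U(-t^1)\psi^1]<(ME)_c$---forces $\lim_nE[W_n^M]=0$, hence $W_n^M\to0$ in $\dot H^s$ and therefore in $H^s$. Thus $u_c(t_n)\to U(-t^1)\psi^1$ in $H^s$, which proves that $\{u_c(\cdot,t)\mid t\ge0\}$ is precompact in $H^s$; the claim for $\|u_c\|_{S((-\infty,0];\Lambda_{s_c})}=\infty$ follows by running the argument with time reversed.

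I expect the main obstacle to be this reduction to a single, non-escaping profile: converting ``more than one profile'', ``a vanishing profile'' or ``an escaping time shift'' into a finite Strichartz bound for $u_c$ past $t_n$ requires combining carefully the mass and energy Pythagorean expansions below the $\dot H^s$-threshold, the strict mass drop $M[\psi^j]<1$, the definition of $(ME)_c$, the wave operators of Proposition \ref{wave operator} and the long-time perturbation theory of Proposition \ref{properturb}---essentially repeating, at the threshold level $(ME)_c$ rather than approaching it, the analysis carried out in the proof of Proposition \ref{critical}.
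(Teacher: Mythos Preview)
Your proposal is correct and follows essentially the same approach as the paper: apply the profile decomposition to $u_c(t_n)$, use the mass and energy Pythagorean expansions together with long-time perturbation (replaying the analysis of Proposition \ref{critical}) to reduce to a single nontrivial profile with $M[\psi^1]=1$ and $W_n^M\to0$ in $H^s$, and then exclude $|t_n^1|\to\infty$ via small data. One minor imprecision: the case $t_n^1\to+\infty$ is not handled by ``time reversal'' or Propositions \ref{wave operator}/\ref{properturb} but, as in the paper, by applying small data \emph{backwards} from time $t_n$ to get $\|u_c\|_{S((-\infty,t_n];\Lambda_{s_c})}\leq\delta_{sd}$ and then using your reduction $t_n\to+\infty$ to deduce a global Strichartz bound, contradicting $\|u_c\|_{S([0,+\infty);\Lambda_{s_c})}=\infty$.
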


\begin{proof}

We will argue by contradiction and write $u=u_c$ for short.
Otherwise, we will obtain
an $\eta>0$ and a sequence $t_n\rightarrow+\infty$ such that for all $n\neq n'$,
\begin{align}\label{eta}
\|u(\cdot,t_n)-u(\cdot,t_{n'})\|_{H^s}\geq\eta.
\end{align}
We take $\phi_n=u(t_n)$ in the profile expansion lemma \ref{lpd}
to obtain the profiles $\psi^j$ and a remainder $W_n^M$ such that
$u(t_n)=\sum_{j=1}^M U(-t_n^j)\psi^j+W_n^M$
with $|t_{n}^{j}-t_{n}^{k}|\rightarrow+\infty$ as $n\rightarrow+\infty$ for any $j\neq k$.
Then, Lemma \ref{energy  expansion} gives
$$\sum_{j=1}^M\lim_{n\rightarrow+\infty}E[U(-t_n^j)\psi^j]+\lim_{n\rightarrow+\infty}E[W_n^M]=E[u(t_n)]=(ME)_c.$$
Similar to the proof of Lemma \ref{lemcompare}, we know that each energy is non-negative, and thus, for any $j$,
$$\lim_{n\rightarrow+\infty}E[U(-t_n^j)\psi^j]\leq(ME)_c.$$
Moreover, by \eqref{hsexpansion}, we have
$$\sum_{j=1}^MM[\psi^j]
+\lim_{n\rightarrow+\infty}M[W_n^M]=\lim_{n\rightarrow+\infty}M[u(t_n)]=1.$$

If more than one $\psi^j\neq0,$ following the proof in Proposition \ref{critical}, we can show that
this case will contradict  the definition of the critical solution $u=u_c$.
Thus, we will address the case in which only $\psi^1\neq0$ and $\psi^j=0$ for all $j>1,$
and thus, \begin{align}\label{513}
u(t_n)= U(-t_{n}^{1})\psi^1+W_n^M.
\end{align}
In addition, as in the proof of Proposition \ref{critical}, we find that
$M[\psi^1]=1$, $\lim\limits_{n\rightarrow+\infty}E[U(-t_{n}^{1})\psi^1]=(ME)_c,$
$\lim\limits_{n\rightarrow+\infty}M[W_n^M]=0$ and
$\lim\limits_{n\rightarrow+\infty}E[W_n^M]=0.$
Thus, by Lemma \ref{lemcompare}, we obtain
\begin{align}\label{514}
\lim_{n\rightarrow+\infty}\|W_n^M\|_{H^s}=0.
\end{align}

We claim now that $t_{n}^{1}$ converges to some finite $t^1$ up to a subsequence. Note that if this holds, because
$ U(-t_{n}^{1})\psi^1\rightarrow  e^{-it^{1}\Delta}\psi^1$ in $H^s$ and by \eqref{513},
\eqref{514} implies that $u(t_n)$ converges in $H^s$,which contradicts \eqref{eta}; we thus conclude our proof.

Now, we show the above claim by contradiction. Suppose that $t_{n}^{1}\rightarrow-\infty.$
Then, $$\|U(t)u(t_n)\|_{S(\Lambda_{s_c};[0,\infty))}\leq
\|U(t-t_{n}^{1})\psi^1\|_{S(\Lambda_{s_c};[0,\infty))}
+\|U(t)W_n^M\|_{S(\Lambda_{s_c};[0,\infty))}. $$
Because
$$\lim_{n\rightarrow+\infty}\|U(t-t_{n}^{1})\psi^1\|_{S(\Lambda_{s_c};[0,\infty))}
=\lim_{n\rightarrow+\infty}\|U(t)\psi^1\|_{S(\Lambda_{s_c};[-t_{n}^{1},\infty))}=0$$
and $\|U(t)W_n^M\|_{S(\Lambda_{s_c})}\leq \frac{1}{2}\delta_{sd}, $
by taking $n$ as sufficiently large,
we obtain a contradiction to the small data scattering theory.
If other $t_{n}^{1}\rightarrow+\infty,$
we  similarly obtain
$$\|U(t)u(t_n)\|_{S(\Lambda_{s_c};(-\infty,0])}\leq\frac{1}{2}\delta_{sd}. $$
Thus,  the small data scattering theory (Proposition \ref{sd})
shows that $$\|u\|_{S(\Lambda_{s_c};(-\infty,t_n])}\leq \delta_{sd}.$$
Because $t_n\rightarrow+\infty$ by the assumption in the beginning of our proof,
sending $n\rightarrow+\infty$, we obtain $\|u\|_{S(\Lambda_{s_c};(-\infty,+\infty))}\leq \delta_{sd},$
which is a contradiction.

\end{proof}

\begin{corollary}\label{compact-localization}
Let u be a solution to \eqref{eq1} such that $\mathcal{K}^+=\{u(\cdot,t)| ~t\in[0,+\infty)\}$
is precompact in $H_r^s$. Then, for each $\epsilon>0,$
there exists $R>0$ such that
$$\int_{|x|>R}|D^s u(x,t)|^2+|u(x,t)|^2+(\frac1{|\cdot|^\gamma}\ast|u|^2)|u|^2(x,t)dx\leq\epsilon.$$
\end{corollary}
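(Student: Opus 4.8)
The plan is to argue by contradiction, exploiting the precompactness of $\mathcal{K}^+$ in $H^s$ together with the elementary fact that each of the three terms in the integral is the tail of a fixed $L^1(\mathbb R^N)$ density. For $v\in H^s$ and $R>0$ set
$$\rho(v,R):=\int_{|x|>R}\Big(|D^s v(x)|^2+|v(x)|^2+\big(\frac{1}{|\cdot|^\gamma}\ast|v|^2\big)(x)\,|v(x)|^2\Big)\,dx.$$
First I would record that for each fixed $v\in H^s$ one has $\rho(v,R)\to0$ as $R\to\infty$: indeed $D^s v,\,v\in L^2$, and by the Hardy--Littlewood--Sobolev inequality together with the embedding $H^s\hookrightarrow L^{\frac{4N}{2N-\gamma}}$ (valid since $\gamma<4s$) the Hartree density satisfies $(\frac{1}{|\cdot|^\gamma}\ast|v|^2)|v|^2\in L^1$ with $\big\|(\frac{1}{|\cdot|^\gamma}\ast|v|^2)|v|^2\big\|_1\le C\|v\|_{H^s}^4$; hence all three integrands lie in $L^1(\mathbb R^N)$ and their tails vanish by dominated convergence.

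Next I would suppose the corollary fails, so that there is $\epsilon_0>0$ with the property that for every $n\ge1$ one can find a time $t_n\ge0$ with $\rho(u(t_n),n)>\epsilon_0$. By precompactness of $\mathcal{K}^+$, after passing to a subsequence $u(t_n)\to v_\infty$ strongly in $H^s$. The crucial point is that $v\mapsto\rho(v,R)$ is Lipschitz on bounded subsets of $H^s$ with a Lipschitz constant \emph{independent of $R$}, because the difference of two truncated integrals is dominated by the difference of the corresponding full-space integrals. Concretely, $\big|\,\|D^s u(t_n)\|_2^2-\|D^s v_\infty\|_2^2\,\big|\le\|D^s(u(t_n)-v_\infty)\|_2\big(\|D^s u(t_n)\|_2+\|D^s v_\infty\|_2\big)$, the analogous bound holds for the $L^2$ term, and
$$\Big\|\big(\frac{1}{|\cdot|^\gamma}\ast|u(t_n)|^2\big)|u(t_n)|^2-\big(\frac{1}{|\cdot|^\gamma}\ast|v_\infty|^2\big)|v_\infty|^2\Big\|_1\le C\big(\|u(t_n)\|_{H^s}+\|v_\infty\|_{H^s}\big)^3\|u(t_n)-v_\infty\|_{H^s},$$
obtained by writing the difference as $\big(\frac{1}{|\cdot|^\gamma}\ast(|u(t_n)|^2-|v_\infty|^2)\big)|u(t_n)|^2+\big(\frac{1}{|\cdot|^\gamma}\ast|v_\infty|^2\big)\big(|u(t_n)|^2-|v_\infty|^2\big)$ and estimating each summand by Hardy--Littlewood--Sobolev and Sobolev. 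Since precompact sets are bounded, $\sup_n\|u(t_n)\|_{H^s}<\infty$, so $|\rho(u(t_n),n)-\rho(v_\infty,n)|\to0$; combined with $\rho(v_\infty,n)\to0$ from the first step this forces $\rho(u(t_n),n)\to0$, contradicting $\rho(u(t_n),n)>\epsilon_0$. This proves the corollary; radiality is used only insofar as it is inherited from $\mathcal{K}^+\subset H_r^s$.

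I expect the only mildly delicate point to be the continuity estimate for the Hartree density (the second displayed inequality), but it is routine: it follows from Hardy--Littlewood--Sobolev with the two quadratic factors placed in $L^{\frac{2N}{N-2s_c}}$ and $L^{\frac{2N}{N-2s}}$ — whose exponents add up correctly because $2s_c+2s=\gamma$ — and the corresponding Sobolev embeddings, which hold under the standing hypothesis $2s<\gamma<\min\{N,4s\}$. Everything else is dominated convergence plus the compactness assumption, so there is no genuine obstacle.
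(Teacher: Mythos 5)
Your proof is correct and follows essentially the same contradiction-plus-precompactness argument as the paper: negate the conclusion, extract a strongly convergent subsequence $u(t_n)\to\phi$ in $H^s$, and derive a contradiction from the tail decay of the $L^1$ densities of $\phi$. Your write-up is more careful than the paper's, in particular spelling out the $R$-uniform continuity of $v\mapsto\rho(v,R)$ on bounded sets of $H^s$ (via Hardy--Littlewood--Sobolev for the Hartree term), a step the paper invokes implicitly when passing to the limit.
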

\begin{proof}
If not, for any $R>0$, there exists $\epsilon_0>0$ and a sequence $t_n$ such that
$$\int_{|x|>R}|D^s u(x,t_n)|^2+|u(x,t_n)|^2+(\frac1{|\cdot|^\gamma}\ast|u|^2)|u|^2(x,t_n)dx\geq\epsilon_0.$$
By the precompactness of $\mathcal{K}^+$, there exists $\phi\in H^s$ such that, up to a subsequence of $t_n$,
we have $u(\cdot,t_n)\rightarrow\phi$ in $H^s$. Thus, for any $R>0$, we obtain
$$\int_{|x|>R}|D^s \phi(x)|^2+|\phi(x)|^2+(\frac1{|\cdot|^\gamma}\ast|\phi|^2)|\phi|^2(x)dx\geq\epsilon_0,$$
from which we can easily obtain a contradiction because  $\phi\in H^s$ and
$V(\phi)\leq c\|\phi\|^4_{H^s}$ by the Hardy-Littlewood-Sobolev inequality.

\end{proof}

\section{Rigidity theorem }

In this section, we will prove the following Liouville-type theorem.

\begin{theorem}\label{rigidity} Let $N\geq 2$ and $2s<\gamma<\min\{N,4s\}$.
Suppose that $u_0\in H^s$ is radial and that $u_0\in K_1$, i.e.,
\begin{equation}\label{6.1}
M[u_{0}]^{\frac{s-s_c}{s_c}}E[u_{0}]<M[Q]^{\frac{s-s_c}{s_c}}E[Q],
\end{equation}
 and
\begin{equation}\label{6.2}
M[u_{0}]^{\frac{s-s_c}{s_c}}\|u_{0}\|^2_{\dot{H}^s}<M[Q]^{\frac{s-s_c}{s_c}}\| Q\|^2_{\dot{H}^s}.
\end{equation}
Let $u$ be the global solution of \eqref{eq1} with initial data $u_0$, and it holds that
$\mathcal{K}^+=\{u(\cdot,t)| ~t\in[0,+\infty)\}$ is precompact in $H^s$.
Then, $u_0=0$.
The same conclusion holds if
$\mathcal{K}^-=\{u(\cdot,t):t\in(-\infty,0]\}$ is precompact in $H^s$.
\end{theorem}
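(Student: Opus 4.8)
The plan is to run a localized virial (rigidity) argument, with the classical variance $\int|x|^2|u|^2\,dx$ replaced by a spatially truncated version of the weighted quantity behind \eqref{4.18}. Suppose, for contradiction, that $u_0\neq0$. By \eqref{M}, $\|u(t)\|_2^2=\|u_0\|_2^2=:m^2>0$ for all $t\ge0$, and by precompactness of $\mathcal K^+$, $B:=\sup_{t\ge0}\|u(t)\|_{H^s}<\infty$. I would first show $\rho:=\inf_{t\ge0}\|D^su(t)\|_2>0$: if $\|D^su(t_n)\|_2\to0$, then (along a subsequence) $u(t_n)\to\phi$ in $H^s$ with $\|D^s\phi\|_2=0$, so $\phi\equiv0$ and $m^2=\lim_n\|u(t_n)\|_2^2=0$, a contradiction. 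Since $u_0\in K_1$, Lemma \ref{lemlowerbound} then provides a fixed $\delta>0$ and $C_\delta>0$ with
$$\|D^su(t)\|_2^2-\frac{\gamma}{4s}V(u(t))\ \ge\ C_\delta\|D^su(t)\|_2^2\ \ge\ C_\delta\rho^2\ >\ 0\qquad\text{for all }t\ge0 .$$

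Next I would set up the truncation (recall that in the range of Theorem \ref{th1}(i) one has $s>\tfrac12$, hence $1-s<s$). Fix a radial $\chi\in C_c^\infty(\mathbb R^N)$ with $\chi\equiv1$ on $\{|x|\le1\}$ and $\operatorname{supp}\chi\subset\{|x|\le2\}$, put $a_R(x)=\chi(x/R)\,x$ (so $|a_R|\le CR$ and $|\partial^\alpha a_R|\le CR^{1-|\alpha|}$), and define
$$z_R(t):=\int\overline{u(t)}\;a_R\cdot(-\Delta)^{1-s}\!\big(a_R u(t)\big)\,dx=\big\|D^{1-s}\!\big(a_R u(t)\big)\big\|_2^2\ \ge\ 0 ,$$
a localized form of $\int\overline u\,x(-\Delta)^{1-s}x\,u\,dx$ in \eqref{4.18}. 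A fractional Leibniz estimate yields $\|D^{1-s}(a_Ru)\|_2\le CR\|u\|_{H^s}$, so $0\le z_R(t)\le CR^2B^2$ uniformly in $t$ for each fixed $R$. Differentiating $z_R$ twice along \eqref{eq1} — the first derivative being a momentum-type term, the second the virial, with the bounded weight $a_R$ in place of $x$ (as in \cite{ChoHwangKwonLee2012,Zhu2016}) removing the need to regularize the weight, the remaining regularization of $u$ being standard — I expect an identity of the form
$$z_R''(t)\ =\ c_s\Big(\|D^su(t)\|_2^2-\frac{\gamma}{4s}V(u(t))\Big)+\mathcal E_R(t),\qquad c_s>0 ,$$
where $\mathcal E_R(t)$ collects the contributions of the region $\{|x|>R\}$, where $a_R$ departs from $x$, together with the nonlocal tails produced when $(-\Delta)^{1-s}$ and $(-\Delta)^s$ are commuted past $a_R$.

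To conclude, I would bound $\mathcal E_R$ uniformly in $t$ using Corollary \ref{compact-localization}: for each $\varepsilon>0$ there is $R=R(\varepsilon)$ with $\int_{|x|>R}\big(|D^su|^2+|u|^2+(\tfrac1{|\cdot|^\gamma}\ast|u|^2)|u|^2\big)\,dx\le\varepsilon$ for all $t\ge0$, and combined with the off-diagonal kernel decay of $(-\Delta)^{1-s}$ and $(-\Delta)^s$ (equivalently, splitting these operators into local pieces via the Balakrishnan/Caffarelli--Silvestre representation), this gives $|\mathcal E_R(t)|\le C\varepsilon+CR^{-\beta}$ for some $\beta>0$, uniformly in $t\ge0$. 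Choosing $\varepsilon$ small, hence $R$ large, so that $C\varepsilon+CR^{-\beta}\le\tfrac12 c_sC_\delta\rho^2$, we obtain $z_R''(t)\ge\kappa:=\tfrac12 c_sC_\delta\rho^2>0$ for all $t\ge0$, hence $z_R(t)\ge z_R(0)+z_R'(0)\,t+\tfrac{\kappa}{2}t^2\to+\infty$ as $t\to+\infty$, contradicting $0\le z_R(t)\le CR^2B^2$. Therefore $u_0=0$. If instead $\mathcal K^-$ is precompact, the same argument applied to $v(t):=\overline{u(-t)}$ — which solves \eqref{eq1} with datum $\overline{u_0}\in K_1$ and has $\{v(t):t\ge0\}$ precompact in $H^s$ — again yields $u_0=0$.

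The main obstacle is the control of $\mathcal E_R$: unlike in the classical case $s=1$, the nonlocality of $(-\Delta)^s$ prevents the truncated virial identity from localizing cleanly, and one must show that the commutator errors $[(-\Delta)^{1-s},a_R]$ and $[(-\Delta)^s,a_R]$, tested against $u(t)$, tend to $0$ as $R\to\infty$ \emph{uniformly in $t$}. This is precisely where the uniform-in-time spatial concentration of $u$ coming from the precompactness of $\mathcal K^+$ (Corollary \ref{compact-localization}) enters in an essential way.
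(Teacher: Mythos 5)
Your argument is in the right spirit (localized virial, lower bound from Lemma \ref{lemlowerbound}, error control from precompactness), but it takes a genuinely different route from the paper's, and the route you chose has a gap that is not merely technical.

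The paper does \emph{not} differentiate a truncated variance twice. Following Boulenger--Himmelsbach--Lenzmann, it works with the \emph{first-order} localized Morawetz functional
\[
\mathcal M_R(t)=2\,\Im\int \bar u(t,x)\,R\nabla\varphi(x/R)\cdot\nabla u(t,x)\,dx
\]
and differentiates it \emph{once}. Crucially, $\mathcal M_R'(t)$ is computed through the Balakrishnan representation $u_m=c_s(-\Delta+m)^{-1}u$ and the identity $\int_0^\infty m^s\!\int|\nabla u_m|^2\,dx\,dm=s\|D^su\|_2^2$, which replaces the nonlocal operator $(-\Delta)^s$ by a family of \emph{local} second-order operators. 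The error term $A_R(u)$ then lives manifestly on $\{|x|>R\}$ and is bounded by the tails of $\int_0^\infty m^s\int_{|x|>R}|\nabla u_m|^2\,dx\,dm$, $\|u\|_{L^2(|x|>R)}^2$ and the localized potential, which Corollary \ref{corlocal} makes small uniformly in $t$; the a priori bound $|\mathcal M_R(t)|\lesssim R\|u\|_{H^{1/2}}^2$ then gives the contradiction after integrating once in $t$. That design choice --- first derivative, not second; Balakrishnan splitting, not direct commutation --- is exactly what makes the remainder controllable with only $H^s$-precompactness in hand.

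Your quantity $z_R(t)=\|D^{1-s}(a_Ru)\|_2^2$ requires two time derivatives, and each produces commutators $[(-\Delta)^{1-s},a_R]$ and $[(-\Delta)^s,a_R]$ acting on $u$. These commutators are \emph{not} supported on $\{|x|>R\}$: the kernels of $(-\Delta)^{1-s}$ and $(-\Delta)^s$ are global, so the commutator couples the region $|x|\le R$ (where $a_R(x)=x$ and most of the mass sits) against $|y|>R$ with only polynomial off-diagonal decay. You assert $|\mathcal E_R(t)|\le C\varepsilon+CR^{-\beta}$ uniformly in $t$, but you do not prove it, and $H^s$ tail-smallness from Corollary \ref{compact-localization} does not by itself control such nonlocal commutator terms. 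This is underscored by the untruncated identity \eqref{4.18} you are modeling on: even there the remainders are of order $Ct+C$, and they are obtained in \cite{ChoHwangKwonLee2012,Zhu2016} under the \emph{extra} hypotheses $u_0\in H^{s_0}$, $|x|u_0\in L^2$, $x\cdot\nabla u_0\in L^2$, none of which is available in Theorem \ref{rigidity}. Your $z_R''$ inherits precisely these remainders without the hypotheses used to control them, so as written the argument does not close; you honestly flag the error control as the main obstacle, but it is the crux, not a loose end. The surrounding scheme --- the lower bound $\rho:=\inf_t\|D^su(t)\|_2>0$ via precompactness, the use of Lemma \ref{lemlowerbound} to get a uniform coercivity constant, and the conjugation $v(t)=\overline{u(-t)}$ for the $\mathcal K^-$ case --- is correct and consistent with the paper.
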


 Before proving the rigidity theorem, we follow the same idea of \cite{b-h-ljfa16} to introduce
the localized virial estimate for the radial solutions of \eqref{eq1}.

For $u\in H^s$ with $s\geq\frac12$, we need the auxiliary function $u_m=u_m(t,x)$, defined as
\begin{align}\label{um}
u_m:=c_s\frac1{-\Delta+m}u(t)=c_s\mathcal F^{-1}\frac{\widehat u(t,\xi)}{|\xi|^2+m}
\end{align}
with $c_s=\sqrt{\frac{sin\pi s}{\pi}}$, turns out to be a convenient normalization factor.
By Balakrishnan's formula in semi-group theory used in \cite{b-h-ljfa16}, for any  $u\in H^s$, we have the identity
\begin{align}\label{snorm}
\int_0^\infty m^s\int_{\mathbb R^N}|\nabla u_m|^2dxdm=s\|(-\Delta)^{\frac s2}u\|_2^2.
\end{align}
We obtain a counterpart of Corollary \ref{compact-localization}.
\begin{corollary}\label{corlocal}
Let $u=u(t)$ be a solution to \eqref{eq1} such that $\mathcal{K}^+=\{u(\cdot,t)| ~t\in[0,+\infty)\}$
is precompact in $H_r^s$. Then, for each $\epsilon>0,$
there exists $R>0$ such that
$$\int_0^\infty m^s\int_{|x|>R}|\nabla u_m|^2dxdm+\int_{|x|>R}|u(x,t)|^2+(\frac1{|\cdot|^\gamma}\ast|u|^2)|u|^2(x,t)dx\leq\epsilon.$$
\end{corollary}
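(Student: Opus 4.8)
The plan is to follow the proof of Corollary~\ref{compact-localization}, using precompactness of $\mathcal K^+$ to reduce the estimate to finitely many fixed profiles in $H^s$, and then to handle the one new quantity $\int_0^\infty m^s\int_{|x|>R}|\nabla u_m|^2\,dx\,dm$ through the Balakrishnan identity \eqref{snorm}. First I would observe that the two ``local'' tails, $\int_{|x|>R}|u(x,t)|^2\,dx$ and $\int_{|x|>R}(\frac1{|\cdot|^\gamma}\ast|u|^2)|u|^2(x,t)\,dx$, are already bounded uniformly in $t\in[0,\infty)$ by Corollary~\ref{compact-localization} itself, so the only thing left to establish is that the first term can be made small, uniformly in $t$, by taking $R$ large.

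For that term I would introduce, for $v\in H^s$ and $v_m=c_s(-\Delta+m)^{-1}v$ as in \eqref{um}, the quantity $F_R(v):=\big(\int_0^\infty m^s\int_{|x|>R}|\nabla v_m|^2\,dx\,dm\big)^{1/2}$. Since $v\mapsto v_m$ is linear, $F_R$ is a seminorm (the $L^2$ norm of $\nabla v_m$ with respect to the nonnegative measure $m^s\,dm\otimes\mathbf 1_{\{|x|>R\}}\,dx$), and Tonelli's theorem together with \eqref{snorm} furnishes the bound $F_R(v)^2\le\int_0^\infty m^s\int_{\mathbb R^N}|\nabla v_m|^2\,dx\,dm=s\|(-\Delta)^{s/2}v\|_2^2\le s\|v\|_{H^s}^2$, which is finite and uniform in $R$. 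Now, given $\epsilon>0$, precompactness of $\mathcal K^+$ yields a finite $\epsilon$-net $\phi_1,\dots,\phi_k\in H^s$, so that for each $t\ge0$ there is an index $i(t)$ with $\|u(\cdot,t)-\phi_{i(t)}\|_{H^s}<\epsilon$; the seminorm triangle inequality then gives $F_R(u(\cdot,t))\le F_R(\phi_{i(t)})+\sqrt s\,\epsilon$. Finally, \eqref{snorm} applied to each fixed $\phi_i$ shows that $(m,x)\mapsto m^s|\nabla(\phi_i)_m(x)|^2$ is integrable on $(0,\infty)\times\mathbb R^N$, so $F_R(\phi_i)\to0$ as $R\to\infty$ by dominated convergence; choosing $R$ large enough that $F_R(\phi_i)<\epsilon$ for all $i=1,\dots,k$ simultaneously, and then combining with Corollary~\ref{compact-localization}, yields the claim after relabeling the constants.

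The one point to watch --- and the only real obstacle --- is that $v\mapsto v_m$ is a \emph{nonlocal} operator, so the spatial tail of $u_m$ is not controlled by the spatial tail of $u$, and one cannot localize $u_m$ pointwise. The resolution is precisely to avoid any such localization and to rely instead on the linearity of $v\mapsto v_m$, the global finiteness of the double integral supplied by \eqref{snorm}, and the reduction to finitely many profiles. All the measure-theoretic steps (Tonelli, dominated convergence) are legitimate because the integrands are nonnegative and globally integrable by \eqref{snorm}.
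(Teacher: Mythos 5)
Your proposal is correct, and it fills in a step the paper leaves entirely implicit (the paper only writes ``we obtain a counterpart of Corollary~\ref{compact-localization}'' and gives no proof). You correctly identified the only real issue: $(-\Delta+m)^{-1}$ is nonlocal, so one cannot deduce spatial decay of $u_m$ from spatial decay of $u$, and your resolution — treat $F_R(v)=\bigl(\int_0^\infty m^s\int_{|x|>R}|\nabla v_m|^2\,dx\,dm\bigr)^{1/2}$ as a seminorm dominated by $\sqrt{s}\,\|v\|_{\dot H^s}$ via \eqref{snorm}, reduce to a finite $\epsilon$-net using precompactness of $\mathcal K^+$, and kill each fixed profile's tail by dominated convergence — is sound and complete. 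The paper's Corollary~\ref{compact-localization} is proved by contradiction (extract a sequence $t_n$ whose limit $\phi$ would have a non-decaying tail); your direct $\epsilon$-net argument is the contrapositive of the same compactness mechanism and is, if anything, a bit cleaner here because the seminorm structure and the triangle inequality make the uniformity in $t$ transparent. One small bookkeeping remark worth stating explicitly when you write this up: after choosing $R_1$ so that $F_{R_1}(\phi_i)<\epsilon$ for all $i$ in the net and $R_2$ from Corollary~\ref{compact-localization} for the other two terms, take $R=\max(R_1,R_2)$; you gesture at this with ``relabeling the constants'' but it is worth saying outright.
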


{\bf The Proof of Theorem \ref{rigidity}.} It suffices to address the $\mathcal{K}^+$ case, since the $\mathcal{K}^-$ case follows similarly.
For some given real-valued function  $\varphi\in C_c^\infty$, which is radial, with
$$\varphi(x)=\left\{\begin{aligned}
&|x|^2 \ \ &for\ \  |x|\leq1\\
&0\ \ &for\ \ |x|\geq2.\end{aligned}\right.$$
For $R>0$, define the localized virial of $u\in H^s$ to be the quantity given by
\begin{align*}
\mathcal M_{R}(t):=2Im\int_{\mathbb R^N}\bar u(t,x)R\nabla \varphi(\frac xR)\cdot\nabla u(t,x)dx.
\end{align*}
Following the method used in \cite{b-h-ljfa16}, we have the identity
\begin{align*}
\mathcal M'_{R}(t)=\int_0^\infty m^s\int_{\mathbb R^N}
\left(4\overline{\partial_k u_m}(\partial^2_{kl}\varphi(\frac xR))\partial_lu_m-(\frac1{R^2}\Delta^2\varphi(\frac xR))|u_m|^2
\right)dxdm+I,
\end{align*}
where
\begin{align*}
I=&2R\int_{\mathbb R^N}\nabla\phi(\frac xR)(\nabla(\frac1{|\cdot|^\gamma})\ast|u|^2)|u|^2dx\\
=&-\gamma R\int\int(\nabla\phi(\frac xR)-\nabla\phi(\frac xR))\cdot\frac{x-y}{|x-y|^{\gamma+2}}|u(x)|^2|u(y)|^2
\end{align*}

By the definition of $\varphi,$ we have
\begin{align}\label{MR}
\mathcal M'_R(t)=&8\int_0^\infty m^s\int_{|x|\leq R}|\nabla u_m|^2dx+
4\int_0^\infty m^s\int_{R<|x|<2R}\partial^2_r\varphi\left(\frac{x}{R}\right)|\nabla u_m|^2dxdm
\\ \nonumber
&-\frac{1}{R^2}\int_0^\infty m^s\int_{|x|>R}\Delta^2\varphi\left(\frac{x}{R}\right)|u_m|^2dxdm+I.
\end{align}
We rewrite $I$ as
\begin{align*}
I&=-\gamma R\int\int\left(\nabla\varphi\left(\frac{x}{R}\right)-\nabla\varphi\left(\frac{y}{R}\right)\right)\cdot
\frac{x-y}{|x-y|^{\gamma+2}}|u(x)|^2|u(y)|^2dxdy\\
&=-2\gamma\int\int_{\{|x|\leq R,|y|\leq R\}}\frac{|u(x)|^2|u(y)|^2}{|x-y|^{\gamma}}dxdy\\
&-\gamma R\left[\int\int_\Omega+\int\int_\Lambda\right]\left(\nabla\varphi\left(\frac{x}{R}\right)
-\nabla\varphi\left(\frac{y}{R}\right)\right)\frac{x-y}{|x-y|^{\gamma+2}}|u(x)|^2|u(y)|^2dxdy,
\end{align*}
where $$\Omega=\{(x,y)\in \mathbb R^N\times \mathbb R^N :R<|x|<2R\}\bigcup\{(x,y)\in \mathbb R^N\times \mathbb R^N :R<|y|<2R\}$$ and
$$\Lambda=\{(x,y)\in \mathbb R^N\times \mathbb R^N :|x|>2R,|y|<R\}\bigcup\{(x,y)\in \mathbb R^N\times \mathbb R^N :|x|<R,|y|>2R\}.$$
Then, by the properties of $\varphi,$ we   estimate $I$ as
\begin{align*}
I=&-2\gamma\int\int\frac{|u(x)|^2|u(y)|^2}{|x-y|^{\gamma}}dxdy\\
&+O\left(\int\int_{\{|x|\geq R\}}\frac{|u(x)|^2|u(y)|^2}{|x-y|^{\gamma}}dxdy
+\int\int_{\{|y|\geq R\}}\frac{|u(x)|^2|u(y)|^2}{|x-y|^{\gamma}}dxdy\right)\\
&+O\left(R\int\int_{\{|x|>R,|x-y|>\frac{R}{2}\}}\left(\nabla\varphi\left(\frac{x}{R}\right)-\nabla\varphi\left(\frac{y}{R}\right)\right)\frac{x-y}{|x-y|^{\gamma+2}}|u(x)|^2|u(y)|^2dxdy\right)\\
&+O\left(R\int\int_{\{|x|>R,|x-y|<\frac{R}{2}\}}\left(\nabla\varphi\left(\frac{x}{R}\right)-\nabla\varphi\left(\frac{y}{R}\right)\right)\frac{x-y}{|x-y|^{\gamma+2}}|u(x)|^2|u(y)|^2dxdy\right)\\
=&-2\gamma\int\int\frac{|u(x)|^2|u(y)|^2}{|x-y|^{\gamma}}dxdy+O\left(\int_{|x|>R}(\frac1{|\cdot|^\gamma}\ast|u|^2)|u|^2dx
\right).
\end{align*}

From \eqref{MR}, we obtain
\begin{align*}
\mathcal M'_R(t)=&8\int_0^\infty m^s\int_{|x|\leq R}|\nabla u_m|^2dx+
4\int_0^\infty m^s\int_{R<|x|<2R}\partial^2_r\varphi\left(\frac{x}{R}\right)|\nabla u_m|^2dxdm
\\
&-\frac{1}{R^2}\int_0^\infty m^s\int_{|x|>R}\Delta^2\varphi\left(\frac{x}{R}\right)|u_m|^2dxdm+I\\
\geq&\left(8\int_0^\infty m^s\int_{\mathbb R^N}|\nabla u_m|^2dx-2\gamma V(u)\right)+A_R(u)\\
=&2\gamma\left(\frac{4s}\gamma\|D^s u\|_2^2-V(u)
\right)+A_R(u).
\end{align*}
where by Corollary \ref{corlocal},
\begin{align}\label{AR}
A_R(u(t))&\leq c\left(\|D^s u\|^2_{L^2(|x|>R)}+\frac{1}{R^2}\|u\|^2_{L^2(|x|>R)}+
\int_{|x|>R}(\frac1{|\cdot|^\gamma}\ast|u|^2)|u|^2dx\right)\\ \nonumber
&\rightarrow0,\ \ {\rm as}\ \ R\rightarrow+\infty.
\end{align}

Let a positive constant $\delta\in(0,1)$ be such that
$E[u_0]<(1-\delta)E[Q]M[Q]^{\frac{s-s_c}{s_c}}$.
 It follows from
Lemma \ref{lemlowerbound} and Lemma \ref{lemcompare}  that
$$\frac{4s}\gamma\|D^s u\|_2^2-V(u)\geq C_\delta\|D^s u_0\|_2^2,$$
which gives that for large $R$,
\begin{align}\label{lowerM'}
\mathcal M'_R(t)\geq&C_\delta\|D^s u_0\|_2^2.
\end{align}
Integrating \eqref{lowerM'} over $[0,t]$, we obtain
$$|\mathcal M_R(t)-\mathcal M_R(0)|\geq C_\delta t\|D^s u_0\|_2^2$$
On the other hand, by \cite{b-h-ljfa16}, we should
have
$$|\mathcal M_R(t)-\mathcal M_R(0)|\leq C_R(\|u\|^2_{H^{\frac12}}+\|u_0\|^2_{H^{\frac12}})
\leq C_R(\|u\|^2_{H^{s}}+\|u_0\|^2_{H^{s}})\leq C_R\|Q\|^2_{H^{s}},$$
which is a contradiction for large $t$ unless $u_0=0$.
\ \  \ \ \ \ \ \ \ \ \ \ \  \ \ \ \ \ \ \ \ \ \ \  \ \ \ \ \ \ \ \ \ \ \  \ \ \ \ \ \ \ \ \ \ \  \ \ \ \ \ \ \ \ \ \ \ $\Box$

\vspace{0.5cm}

Now, we can finish the proof of Theorem 1.1.\\
{\bf The  Proof of Theorem 1.1.}

Note that by Proposition \ref{procompact},
the critical solution $u_c$ constructed in Section 4 satisfies the hypotheses in Theorem \ref{rigidity}.
Therefore,
to complete the proof of Theorem \ref{th1}, we should apply Theorem \ref{rigidity} to
$u_c$ and find that $u_{c,0}=0$, which contradicts the fact that $\|u_c\|_{S(\Lambda_{s_c})}=+\infty.$
This contradiction  shows that $SC(u_0)$ holds. Thus, by Proposition \ref{h1scattering},  we have shown that $H^s$ scattering holds.
\ \  \ \ \ \ \ \ \ \ \ \ \  \ \ \ \ \ \ \ \ \ \ \  \ \ \ \ \ \ \ \ \ \ \  \ \ \ \ \ \ \ \ \ \ \  \ \ \ \ \ \ \ \ \ \ \ $\Box$

\vspace{0.5cm}

{\bf Acknowledgments }

This work  was  supported by the National Natural Science Foundation of China (No. 11501395,  11301564, and 11371267) and  the Excellent Youth Foundation of Sichuan  Scientific Committee grant No. 2014JQ0039  in China.

\vspace{0.5cm}


\end{document}